	\theoremstyle{plain}
		\newtheorem{theorem}{Theorem}[section]
		\newtheorem{corollary}[theorem]{Corollary}
		\newtheorem{lemma}[theorem]{Lemma}
		\newtheorem{proposition}[theorem]{Proposition}
	\theoremstyle{definition}
		\newtheorem{definition}[theorem]{Definition}
		\newtheorem{notation}[theorem]{Notation}
		\newtheorem{assumption}[theorem]{Assumption}
	\theoremstyle{remark}
		\newtheorem{remark}[theorem]{Remark}
\renewcommand{\epsilon}{\varepsilon}
\DeclareMathOperator{\Lip}{Lip}					
\DeclareMathOperator{\lip}{lip}					
\DeclareMathOperator{\spt}{spt}					
\DeclareMathOperator{\Per}{Per}					
\DeclareMathOperator{\ess}{ess}					
\DeclarePairedDelimiter{\abs}{\lvert}{\rvert}	
\DeclarePairedDelimiter{\norm}{\lVert}{\rVert}	
\DeclarePairedDelimiter{\pths}{(}{)}			
\DeclarePairedDelimiter{\bkts}{[}{]}			
\DeclarePairedDelimiter{\brcs}{\lbrace}{\rbrace} 	
\newcommand{\stset}{\;\middle|\;}				
\newcommand{\numberset}{\mathbb}
\newcommand{\N}{\numberset{N}}
\newcommand{\R}{\numberset{R}}
\newcommand{\de}{\mathrm{d}}
\newcommand{\Xmms}{\mathsf{X}}					
	\newcommand{\dist}{\mathsf{d}}				
	\newcommand{\meas}{\mathfrak{m}}			
	\newcommand{\dmeas}{\,\de\meas}				
\newcommand{\Xdm}{(\Xmms,\dist,\meas)}			
\newcommand{\Odm}{(\Omega, \dist, \meas)}		
\newcommand{\Ymms}{\mathsf{Y}}					
	\newcommand{\distY}{\mathsf{d}_{\Ymms}}			
	\newcommand{\measY}{\mathfrak{m}_{\Ymms}}
	\newcommand{\Ydm}{(\Ymms,\distY,\measY)}
\newcommand{\Zmms}{\mathsf{Z}}					
	\newcommand{\distZ}{\mathsf{d}_{\Zmms}}			
	\newcommand{\measZ}{\mathfrak{m}_{\Zmms}}
	\newcommand{\Zdm}{(\Zmms,\distZ,\measZ)}
\newcommand{\XX}{\mathcal{X}}					
\newcommand{\Xdxm}{(\Xmms,\dist,x,\meas)}		
\newcommand{\Xdxmi}%
		{(\Xmms_i,\dist_i,x_i,\meas_i)}			
\DeclareMathOperator{\CD}{CD}
\DeclareMathOperator{\RCD}{RCD}
\newcommand{\mGH}{\mathrm{mGH}}					
\newcommand{\pmGH}{\mathrm{pmGH}}				
\newcommand{\pmG}{\mathrm{pmG}}					
\newcommand{\dmGH}{\dist_{\mGH}}
\def\mwug{\@ifstar\@mwug\@@mwug}
	\def\@mwug#1{\abs*{\nabla #1}_{\mathrm{w}}}
	\def\@@mwug#1{\abs*{\nabla #1}}
\newcommand{\ball}[2]{B_{#2}\pths*{#1}}	
\newcommand{\hKN}{h_{K,N}}						
\newcommand{\HKN}{H_{K,N}}						
\newcommand{\cKN}{c_{K,N}}						
\newcommand{\mKN}{\meas_{K,N}}					
\newcommand{\dmKN}{\, \de \meas_{K,N}}
\newcommand{\JKN}{J_{K,N}}						
\newcommand{\DKN}{D_{K,N}}						
\newcommand{\isoKN}{\mcI_{K,N}}					
\newcommand{\lapKN}{\Delta_{K,N}}				
\newcommand{\deu}{\dist_{\mathrm{eu}}}			
\newcommand{\modelspace}{(\JKN, \deu, \mKN)}
\newcommand{\hNN}{h_{N-1,N}}					
	\newcommand{\HNN}{H_{N-1,N}}
	\newcommand{\mNN}{\meas_{N-1,N}}
	\newcommand{\lapNN}{\Delta_{N-1,N}}
	\newcommand{\dmNN}{\, \de \meas_{N-1,N}}
	\newcommand{\JNN}{J_{N-1,N}}
\newcommand{\leb}{\mathscr{L}}					
\newcommand{\borel}{\mathscr{B}}				
\newcommand{\C}{\mathcal{C}}					
\newcommand{\Cb}{\C_{\mathrm{b}}}				
\newcommand{\Eform}{\mathcal{E}}				
\newcommand{\Eop}{\mathcal{L}_{\Eform}}			
\newcommand{\Ch}{\mathrm{Ch}}					
\newcommand{\limi}{\lim_{i \to \infty}}
\newcommand{\ii}{\bm{i}}
\newcommand{\Wp}{W^{1,p}}						
\newcommand{\W}{W^{1,2}}						
\newcommand\restr[2]{{
  \left.\kern-\nulldelimiterspace 
  #1 
  \vphantom{\big|} 
  \right|_{#2} 
  }}
\newcommand{\PSz}{P\'{o}lya-Szeg\H{o}}
\newcommand{\Holder}{H\"{o}lder}
\newcommand{\proofstep}[1]{\textsc{#1}}
\let\latexchi\chi
\renewcommand\chi{\@ifnextchar_\sub@chi\latexchi}
\newcommand{\sub@chi}[2]{
  \@ifnextchar^{\subsup@chi{#2}}{\latexchi^{}_{#2}}%
}
\newcommand{\subsup@chi}[3]{
  \latexchi_{#1}^{#3}%
}
\newcommand{\ie}{\emph{i.e.}}
\newcommand{\wrt}{with respect to}
\newcommand{\mms}{metric measure space}
\newcommand{\pmms}{pointed metric measure space}
\def\MySet#1{\expandafter\def\csname mc#1\endcsname{\mathcal{#1}}%
			 \expandafter\def\csname ms#1\endcsname{\mathscr{#1}}%
			 }
\def\ALLvec#1{\ifx#1\ALLvec\else\MySet#1\expandafter\ALLvec\fi}
\title{A Talenti-type comparison theorem for $\RCD(K,N)$ spaces and applications}
\author{	Andrea Mondino%
				\thanks{%
					Andrea Mondino: Mathematical Institute, University of Oxford, UK.\newline
					Email: \href{mailto:Andrea.Mondino@maths.ox.ac.uk}{Andrea.Mondino@maths.ox.ac.uk}}%
			\and Mattia Vedovato%
				\thanks{%
					Mattia Vedovato: Dipartimento di Matematica, Università degli Studi di Trento, Trento, Italy.\newline
					Email: \href{mailto:mattia.vedovato@unitn.it}{mattia.vedovato@unitn.it}}} 
\date{\today}
\begin{document}
\maketitle

\abstract 
We prove pointwise and $L^{p}$-gradient comparison results for solutions to  elliptic Dirichlet problems defined on open subsets of a (possibly non-smooth) space with positive Ricci curvature (more precisely of an $\RCD(K,N)$ metric measure space, with $K>0$ and $N\in (1,\infty)$). The obtained Talenti-type comparison is sharp, rigid and stable with respect to $L^{2}$/measured-Gromov-Hausdorff topology; moreover, several aspects seem new even for smooth Riemannian manifolds. As applications of such Talenti-type comparison, we prove  a  series of improved Sobolev-type inequalities, and an  $\RCD$ version of the St.~Venant-P\'olya torsional rigidity comparison theorem (with associated rigidity and stability statements). Finally, we give a  probabilistic interpretation (in the setting of smooth Riemannian manifolds) of the aforementioned comparison results,  in terms of exit time from an open subset for the Brownian motion.
\tableofcontents
\bigskip

\textbf{\textit{Keywords: }}Ricci curvature, comparison theorem, elliptic PDE, Dirichlet form, symmetrization.

\section{Introduction}
In the study of geometric and variational problems in Euclidean spaces, a tool which often proves useful is the technique of \emph{symmetrization}: one can frequently simplify a complex problem by reducing it to the study of spherically symmetric objects. 
Specifically, the classical notion of \emph{Schwarz symmetrization} of a function plays a notable role in proving results such as the Rayleigh-Faber-Krahn Inequality, as well as several variational inequalities for differential boundary problems. The rough idea is the following: 
for any bounded measurable domain $\Omega\subset \R^n$, one considers the unique ball $\Omega^\star\subset \R^n$ centered at the origin and having the same volume of $\Omega$; then, given a measurable function $u:\Omega\to[0,\infty)$, one constructs a ``symmetrized'' function $u^\star:\Omega^\star\to[0,\infty)$ which is radial, decreases in the radial variable, and its super-level sets $\{u^{\star}>t\}$ have the same Lebesgue measure as the corresponding super-level sets $\{u>t\}$ of $u$.

The idea of using symmetrizations to infer comparison results for elliptic boundary value problems goes back (at least) to the proofs by Faber \cite{Faber1923} and Krahn \cite{Krahn1925} of  Lord Rayleigh's conjecture about the principal frequency for an elastic membrane,  and to the work of Szeg\H{o} \cite{Szego1950, Szego1958} on the clamped and buckling plate problems. Estimates on solutions to differential boundary value problems via Schwarz symmetrization have been then obtained by several mathematicians, let us mention Weinberger \cite{Weinberger1962}, Bandle \cite{Bandle1976}, Talenti \cite{Talenti1976}, P. L. Lions \cite{Lions1981}, Alvino-Lions-Trombetti \cite{ALT1990}. The corresponding paradigmatic result is now well known in the literature as ``Talenti comparison theorem'', and we keep such terminology.
\par The basic idea is to compare the outcomes of the following two procedures:
	\begin{enumerate}[(a)]
	\item Solve a  Poisson problem of the type
		\begin{equation}\label{eq:PoissonIntro}
		\left\lbrace
			\begin{aligned}
			-\Delta u 	&= 	f 	&&\text{in $\Omega\subset\R^n$}\\
			u			&=	0	&&\text{on $\partial\Omega$}
			\end{aligned}
		\right.,
		\end{equation}
	with $f\in L^2(\Omega)$; then consider the Schwarz symmetrization $u^\star$ of $u$.
	\item Solve the symmetrized Poisson problem
		\begin{equation}
		\left\lbrace
			\begin{aligned}
			-\Delta v 	&= 	f^\star 	&&\text{in $\Omega^\star\subset\R^n$}\\
			v			&=	0			&&\text{on $\partial\Omega^\star$}
			\end{aligned}
		\right..
		\end{equation}
	\end{enumerate} 
Talenti  \cite{Talenti1976}, sharpening the aforementioned  \cite{Weinberger1962, Bandle1976}, proved  that the pointwise inequality $u^\star \leq v$ holds $\leb^n$-almost everywhere in $\Omega^\star$; moreover, if $u^\star=v$ holds almost everywhere in $\Omega^\star$, then $\Omega$ itself was already a ball.
We refer the reader to \cite{Bandle1980, Baernstein2019, Lions1981, Kesavan2006, PolyaSzego1951} for different proofs and related topics.

\medskip

The aim of the present work  is to generalize such a comparison result to a curved, possibly non-smooth, setting.
\\The framework of the paper is the one of metric measure spaces with Ricci curvature bounded below (by a constant $K>0$) and dimension bounded above (by $N\in (1,\infty)$) in a synthetic sense, via optimal transport. Recall that a metric measure space is a triplet $(\Xmms,\dist,\meas)$ where $(\Xmms, \dist)$ is a complete separable metric space, and $\meas$ is a non-negative Borel measure finite on bounded sets. 
\\More precisely, the paper will be in the framework of $\RCD(K,N)$ spaces, with $K>0$ and $N\in (1,\infty)$. We refer the reader to  \cref{subsec:cdbounds} for more details about the definition and the relevant literature; for the sake of this introduction, we only mention that the class of $\RCD(K,N)$ spaces includes as remarkable examples:
\begin{itemize}
\item Riemannian manifolds with Ricci curvature $\geq K$ and dimension $\leq N$, as well as their measured-Gromov-Haudorff limits;
\item Alexandrov spaces with Hausdorff dimension $\leq N$ and curvature $\geq K/(N-1)$.
\end{itemize}
Moreover, if $K>0$, a generalized version of the Bonnet-Myers theorem implies that $\spt(\meas)$ is compact and thus $\meas(\Xmms)<\infty$. Up to a constant normalization of the measure, we can thus assume that $\meas(\Xmms)=1$ (see \cref{rem:scaling}).
\\Let us stress that the results of the paper seems new even for smooth Riemannian manifolds with Ricci curvature bounded below by a positive constant. 

It is worth to mention that, while in the Euclidean setting the Schwarz symmetrization is defined on balls \emph{in the very same Euclidean space}, for the curved setting of an $\RCD(K,N)$ space the symmetrization is defined on a ``model space'' depending only on $K>0$ and $N\in (1,\infty)$. Such a model space is given by  an interval $\JKN$ of the real line, endowed with the Euclidean distance and a measure which is absolutely continuous \wrt{} the Lebesgue measure:
	\begin{equation}\label{eq:ModelSpaceIntro}
	\JKN \doteq \bkts*{0,\pi\sqrt{\tfrac{N-1}{K}}}, \qquad \hKN(t) \doteq \tfrac{1}{c_{K,N}} \sin^{N-1}\pths*{t \sqrt{\tfrac{K}{N-1}}}, \qquad \mKN \doteq \hKN \leb^1,
	\end{equation}
where $c_{K,N}$ is a normalizing constant. Observe that, when $N\geq 2$ is an integer, such a model space naturally corresponds to a round sphere of dimension $N$ and constant Ricci curvature $K$ (see \cref{rem:Sphere}).

The main result of the paper is  a Talenti-type comparison theorem where we compare the weak solution to a  Poisson problem as in \eqref{eq:PoissonIntro} defined on an open set $\Omega$ of an $\RCD(K,N)$ space ($K>0$, $N\in (1,\infty)$) with the solution of an analogous Poisson problem defined on the model space \eqref{eq:ModelSpaceIntro} (see \cref{theorem:talenti}). Actually, we consider more generally any second order elliptic operator arising as infinitesimal generator of any strongly local, uniformly elliptic bilinear form (see \cref{sec:comparison}).
The comparison is (trivially) sharp as equality is attained in the model space \eqref{eq:ModelSpaceIntro}, which is $\RCD(K,N)$.

We will also establish:
\begin{itemize}
\item A \emph{rigidity} result (\cref{thm:talenti_rigidity}) roughly stating that if equality in the Talenti-type comparison \cref{theorem:talenti} is achieved, then the space is a spherical suspension;
\item A \emph{stability} result (\cref{thm:stabilityTalenti})  roughly stating that equality in the Talenti-type comparison \cref{theorem:talenti} is almost achieved (in $L^{2}$-sense) if and only if the space is mGH-close to a spherical suspension.
\end{itemize}
Finally, as applications of the Talenti-type comparison \cref{theorem:talenti}, we will establish:
 \begin{itemize}
 \item A  series of Sobolev-type inequalities that to best of our knowledge are new in the framework of $\RCD(K,N)$ spaces (\cref{Cor:SobolevIneq});
 \item An $\RCD(K,N)$ version of the St.~Venant-P\'olya torsional rigidity comparison theorem, with associated rigidity and stability statements (\cref{thm:StVenPol}). 
 \item A probabilistic interpretation (in the setting of smooth Riemannian manifolds) of the comparison results obtained in the paper,  in terms of exit time from an open subset for the Brownian motion  (\cref{cor:ET}).
 \end{itemize}
 For the reader's convenience, the Appendix gives a self-contained presentation of the statements of the main results for a smooth Riemannian manifold with positive Ricci curvature (as several aspects seem to be new even in this setting).
 
 \subsection*{Acknowledgements}
A.M. is supported by the European Research Council (ERC), under the European's Union Horizon 2020 research and innovation programme, via the ERC Starting Grant  “CURVATURE”, grant agreement No. 802689.


\section{Preliminaries}
\label{sec:prelim}


\subsection{Perimeters, isoperimetric profiles and Sobolev spaces}
\label{subsec:perim} 
Throughout the paper, $\Xdm$ is a metric measure space with $(\Xmms,\dist)$ a complete and separable metric space (actually, as a consequence of the assumptions of our main theorems, $(\Xmms,\dist)$ will be compact) and $\meas$ is a Borel probability measure on $\Xmms$ with $\spt(\meas)=\Xmms$ (see \cref{rem:scaling} below for a justification of such standing working assumptions).
We start by recalling the notion of slope of a real-valued function.

\begin{definition}[Slope]
Let $(\Xmms,\dist)$ be a metric space and $u:\Xmms\to \R$ be a real valued function. We define the \emph{slope} of $u$ at the point $x\in\Xmms$ as
	\begin{equation*}
	(\lip u)(x)\doteq 
		\begin{cases}
			\limsup_{y\to x} \frac{\abs*{u(x)-u(y)}}{d(x,y)} 	&\text{if $x$ is not isolated}		\\
			0		&\text{otherwise}.
		\end{cases}
	\end{equation*}
\end{definition}
From now on $\Lip (\Xmms)=\Lip(\Xmms,\dist)$ will denote the space of Lipschitz maps on $(\Xmms,\dist)$, while $\Lip_c (\Xmms)=\Lip_c(\Xmms,\dist)$ will be the subspace of compactly supported Lipschitz maps.

Given a metric measure space $\Xdm$, one can introduce a notion of \emph{perimeter} which extends the classical one on $\R^n$. The following definition was first introduced in \cite{Miranda2003} and further explored in \cite{Ambrosio2001, Ambrosio2002}:

\begin{definition}[Perimeter]
Let $E\in\borel (\Xmms)$, where $\borel(\Xmms)$ denotes the class of Borel sets of $(\Xmms,\dist)$, and let $A\subset \Xmms$ be open. We define the perimeter of $E$ relative to $A$ as:
	\begin{equation*}
	\Per (E;A)\doteq \inf \brcs*{\liminf_{n\to \infty} \int_A \lip u_n \dmeas \stset u_n\in \Lip(A),\,\, u_n\to \chi_E \text{ in $L^1(A,\meas)$}},
	\end{equation*}
where $ (\lip u_n)(x)$ is the slope of $u$ at the point $x$.

If $\Per (E;\Xmms)<\infty$, we say that $E$ is a \emph{set of finite perimeter}.
\end{definition}

When $E$ is a fixed set of finite perimeter, the map $A\mapsto \Per (E;A)$ is the restriction to open sets of a finite Borel measure on $\Xmms$, defined as
	\begin{equation*}
		\Per(E;B)\doteq \inf \brcs*{\Per(E;A) \stset \text{$A$ open}, A\supset B}.
	\end{equation*}

\begin{definition}[Isoperimetric profile]\label{def:iso_profile}
Let $(\Xmms,\dist,\meas)$ be a \mms{} with $\meas(\Xmms)=1$. The \emph{isoperimetric profile} $\mcI=\mcI_{\Xdm}:[0,1]\to[0,+\infty)$ is defined as 
	\begin{equation}\label{eq:iso_profile}
	\mcI_{\Xdm}(v)\doteq \inf\brcs*{\Per(E)\stset E\in\msB(\Xmms), \meas(E)=v}, \quad v\in [0,1].
	\end{equation}
\end{definition}

Finally, we recall the notion of Cheeger energy of an $L^p$ function, which will be used to define  Sobolev spaces on \mms{}s. For a review of this theory, we refer the reader to \cite{Ambrosio2014c, Ambrosio}, as well as the pioneering work \cite{Cheeger1999}. As  shown in those references, the definition of $\Wp$ through the Cheeger energy is not the only approach available, but one can prove that other relevant strategies (e.g. Newtonian spaces) turn out to be equivalent in the framework of this paper.

\begin{definition}[Cheeger energy]\label{def:cheeger_energy}
Let $\Xdm$ be a metric measure space, let $p\in(1,+\infty)$ and let $f\in L^p(\Xmms, \meas)$. The \emph{$p$-Cheeger energy} of $f$ is defined as
	\begin{equation}\label{eq:cheeger_energy}
	\Ch_p(f)\doteq \inf \brcs*{\liminf_{n\to \infty}\frac{1}{p}\int \pths*{\lip f_n}^p \dmeas \stset 
		\begin{gathered}
		f_n\in \Lip(\Xmms)\cap L^p(\Xmms,\meas)\\
		\norm*{f_n-f}_{L^p}\to 0
		\end{gathered}},
	\end{equation}
where $(\lip f_n)(x)$ is the slope of $f_n$ at the point $x$.
\end{definition}

\begin{definition}[Sobolev spaces]\label{def:sob_spaces}
Given $\Xdm$ \mms{},  $p\in (1,+\infty)$, and an open subset $\Omega\subset \Xmms$, we define:
	\begin{enumerate}
	\item The space $\Wp \Xdm$ as the space of functions $f\in L^p(\Xmms, \meas)$ with finite $p$-Cheeger energy, endowed with the norm
		\begin{equation*}
		\norm*{f}_{\Wp \Xdm} \doteq  \brcs*{\norm*{f}_{L^p(\Xmms,\meas)}+p\, \Ch_p(f)}^{\frac{1}{p}}
		\end{equation*}
		which makes $\Wp \Xdm$ a Banach space.
	\item The space $\Wp_0(\Omega)$ as the closure of $\Lip_c(\Omega)$ with respect to the norm of $\Wp \Xdm$.
	\end{enumerate}
\end{definition}

For any $f\in \Wp \Xdm$, one can single out a distinguished object $\mwug*{f}\in L^p (\Xmms, \meas)$, which plays the role of the modulus of the gradient and provides the integral representation
	\begin{equation*}
	\Ch_p(f)=\frac{1}{p} \int \mwug*{f}^p \dmeas;
	\end{equation*}
this function is called the \emph{minimal $p$-weak upper gradient}  of $f$ and can be obtained through an optimal approximation in 
\cref{eq:cheeger_energy}; in order to keep the notation clearer, from now on we will omit the subscript $\mathrm{w}$ and simply denote the minimal weak upper gradient of $f$ as $\mwug{f}$. We refer the reader to \cite{Heinonen2015,BjornBjorn, Cheeger1999, Ambrosio2014c} for details.  
A priori the minimal $p$-weak upper gradient may depend on $p$; however in locally doubling and Poincar\'e spaces (and $\RCD(K,N)$ spaces are so, see \cite{Sturm2006b, Rajala2012}) it is independent of $p$ by the deep work of Cheeger \cite{Cheeger1999}.

We now introduce a \emph{local} notion of Sobolev space, which will be needed in \cref{subsec:alm_rig}, and relies on the definition given in \cite[Definition 2.14]{AmbrosioHonda}. We specialize to the case $p=2$, which is the only one we will use.

\begin{definition}[Local Sobolev space]\label{def:loc_sob_spaces}
Let $\Xdm$ be a \mms{} and let $\Omega \subset \Xmms$ be an open subset. We say that $f\in L^2 (\Omega,\meas)$ belongs to $\W(\Omega, \dist, \meas)$ if 
	\begin{enumerate}[(a)]
	\item\label{cond:loc_sob1}  
		for any $\phi\in \Lip_c (\Xmms, \dist)$  with $\spt(\phi)\subset \Omega$, it holds $\phi f \in \W \Xdm$  (where $\W \Xdm$ is the global Sobolev space defined in \cref{def:sob_spaces});
	\item\label{cond:loc_sob2}  
		$\mwug{f} \in L^2(\Omega, \meas)$.
	\end{enumerate}
\end{definition}

Notice that the property \ref{cond:loc_sob1}, together with the locality properties of the minimal weak upper gradient, guarantees that the condition in \ref{cond:loc_sob2} is well posed (see again \cite{AmbrosioHonda}).


\subsection{Curvature-dimension bounds and infinitesimal Hilbertianity}
\label{subsec:cdbounds}

All the results in the paper will be in the framework of $\RCD(K,N)$ spaces.  We recall here very briefly and schematically the main definitions involved   
(for more details see the original papers \cite{LottVillani, Sturm2006a, Sturm2006b,Ambrosio2014b, AGMR2015, Gigli2015, EKS2015, AMS2019, CaMi2016}, or  \cite{Ambrosio} for a survey on the subject). 
In what follows, $\Xdm$  will be a complete and separable \mms{}, and $K, N$ will be real numbers with $N\in(1,\infty)$.
\begin{itemize}
	\item For any metric space $(\Ymms,\dist_{\Ymms})$, we denote by $\msP(\Ymms)$ the space of Borel probability measures on $\Ymms$, and by $\msP_2(\Ymms)$ the space of Borel probability measures with finite second moment.
	\item The \emph{Wasserstein distance} $W_2$ on $\msP_2(\Xmms)$ is defined as
		\begin{equation}\label{eq:wasser_dist}
		W_2(\mu_0,\mu_1) \doteq 
			\inf \brcs*{ \int_{\Xmms \times \Xmms} \dist^2 (x,y) \, \de \gamma (x,y)
						\stset 
						\gamma \in \msP (\Xmms \times \Xmms), 
						\pi^{(0)}_\sharp \gamma = \mu_0,
						\pi^{(1)}_\sharp \gamma = \mu_1},
		\end{equation}
		for any $\mu_0,\mu_1 \in \msP_2(\Xmms)$, where $\pi^{(0)}$ is the projection on the first component, $\pi^{(1)}$ is the projection on the second component, and the subscript $\sharp$ indicates the pushforward of the measure.
	\item The space $\operatorname{Geo}(\Xmms)$ is the space of constant speed geodesics on $\Xmms$:
		\begin{equation*}
		\operatorname{Geo}(\Xmms)
			\doteq \brcs*{
			\gamma \in \C([0,1],\Xmms) \stset 
			\text{$\dist(\gamma(s),\gamma(t))=\abs{s-t}\dist(\gamma(0),\gamma(1))$ for any $s,t\in[0,1]$}}.
		\end{equation*}
		For any $t\in[0,1]$, the \emph{evaluation map} ${\rm e}_t$ is defined on $\operatorname{Geo}(\Xmms)$ as
		\begin{equation*}
		{\rm e}_t(\gamma)\doteq \gamma(t)\quad\text{for any $\gamma \in \operatorname{Geo}(\Xmms)$}.
		\end{equation*}
	\item For any pair of measures $\mu_0$, $\mu_1$ in $\msP_2(\Xmms)$, the set of \emph{dynamical optimal plans} are defined as
		\begin{equation*}
		\operatorname{OptGeo}(\mu_0,\mu_1)
		\doteq 	\brcs*{\nu \in \msP(\operatorname{Geo}(\Xmms))\stset
			\text{$({\rm e}_0,{\rm e}_1)_\sharp \nu$ realizes the minimum in \cref{eq:wasser_dist}}}.
		\end{equation*}
	\item For any $\theta >0$ and $t\in[0,1]$, the \emph{distortion coefficients} are defined as
		\begin{equation*}
		\tau^{(t)}_{K,N}(\theta) \doteq t^{\frac{1}{N}} \sigma^{(t)}_{K,N}(\theta)^{\frac{N-1}{N}},
		\end{equation*}
		where
		\begin{equation*}
		\sigma^{(t)}_{K,N}(\theta)\doteq 
			\begin{cases}
				\infty
					&\text{if $K\theta^2\geq N\pi^2$}\\
				\frac{\sin(t\theta \sqrt{K/N})}{\sin(\theta \sqrt{K/N})}
					&\text{if $0<K\theta^2< N\pi^2$}\\
				t
					&\text{if $K\theta^2<0$ and $N=0$, or if $K\theta^2=0$}\\
				\frac{\sinh(t\theta \sqrt{K/N})}{\sinh(\theta \sqrt{K/N})}
					&\text{if $K\theta^2\leq0$ and $N>0$}
			\end{cases}.
		\end{equation*}
	\item The \emph{Rényi entropy} functional $\mcE_N:\msP (\Xmms) \to [0,\infty]$ is defined as
		\begin{equation*}
		\mcE_N(\mu)\doteq \int_{\Xmms} \rho^{1-\frac{1}{N}} \dmeas, \quad \text{where $\mu = \rho\meas+\mu^s$ and $\mu^s\perp \meas$}.
		\end{equation*}
	\item \emph{$\CD$ condition}: we say that $\Xdm$ verifies the $\CD(K,N)$ condition for some $K\in \R$, $N\in(1,\infty)$ if: for any pair of probability measures  
	$\mu_0,\mu_1\in\msP(\Xmms) $ with bounded support and with $\mu_0,\mu_1\ll \meas$, there exists $\nu \in \operatorname{OptGeo}(\mu_0,\mu_1)$ and an optimal plan $\pi\in \msP(\Xmms\times\Xmms)$ such that $\mu_t\doteq ({\rm e}_t)_\sharp \nu \ll \meas$ and 
		\begin{equation*}
		\mcE_{N'}(\mu_t)\geq 
			\int\bkts*{
				\tau^{(1-t)}_{K,N'}(\dist(x,y))\rho_0^{-\tfrac{1}{N'}}+
				\tau^{(t)}_{K,N'}(\dist(x,y))  \rho_1^{-\tfrac{1}{N'}} 
				} \de \pi (x,y)
		\end{equation*}
		for any $N'\geq N$, $t\in[0,1]$.
	\item We say that $\Xdm$ is \emph{infinitesimally Hilbertian} if the Cheeger energy $\Ch_2$ defined in \eqref{eq:cheeger_energy} is a quadratic form  
	on $\W \Xdm$. In that case, we still denote by $\Ch$ the symmetric bilinear form associated to $\Ch=\Ch_2$.
	\item We say that $\Xdm$ satisfies the \emph{$\RCD(K,N)$ condition} if it satisfies the $\CD(K,N)$ condition and it is infinitesimally Hilbertian.
\end{itemize}

\begin{remark}[Scaling properties and standard normalizations]\label{rem:scaling}
One can define the $\RCD(K,N)$ condition for a complete and separable metric space endowed with a non-negative Borel measure which is finite on bounded subsets.  From the very definitions, it is not difficult to check that for any $\lambda$ and $c >0$ the following implication holds
	\begin{equation}\label{eq:Scaling}
	\text{$\Xdm$ is an $\RCD(K,N)$ space} \quad \Longrightarrow \quad \text{$(\Xmms,\lambda \dist, c \meas)$ is an $\RCD\pths*{ \lambda^{-2}K,N}$ space}.
	\end{equation}
If $K>0$, the Bonnet-Myers Theorem (proved for $\CD(K,N)$ spaces in \cite{Sturm2006b}) implies that $(\Xmms,\dist)$ is compact with $\meas(\Xmms)\in (0,\infty)$. Thanks to the scaling property	\eqref{eq:Scaling}, up to  constant scalings, it is not restrictive to assume $\meas(\Xmms)=1$ and $K=N-1$.
\end{remark}


\subsection{1-dimensional model spaces}
\label{subsec:model_spaces}

In this \namecref{subsec:model_spaces} we recall the 1-dimensional  ``model'' \mms{}s with Ricci curvature bounded below by $K>0$ and dimension bounded above by $N\in (1,\infty)$  singled out in \cite[Appendix C]{Gromov2007} and \cite{Milman2015} on which we will construct the needed symmetrizations. 
Let $K>0$ and $N\in (1,\infty)$. Let $\JKN$ be the interval
	\begin{equation*}
	\JKN 	\doteq \bkts*{0,\pi\sqrt{\tfrac{N-1}{K}}},
	\end{equation*} 
and define the following probability density function on $\JKN$:
	\begin{equation*}
	\hKN(t) \doteq \frac{1}{c_{K,N}} \sin^{N-1}\pths*{t \sqrt{\tfrac{K}{N-1}}},
	\end{equation*}
where $c_{K,N}$ is the normalizing constant
	\begin{equation*}
	c_{K,N}	\doteq \int_{\JKN} \sin^{N-1}\pths*{t \sqrt{\tfrac{K}{N-1}}} \,\de \leb^1(t).
	\end{equation*}
	
\begin{definition}[Model spaces]\label{def:model_space}
Let $K>0$ and $N\in (1,\infty)$. We define the one dimensional model space with curvature parameter $K$ and dimension parameter $N$ as $(\JKN, \dist_{\mathrm{eu}}, \mKN)$, where $\mKN \doteq \hKN \leb^1 \llcorner \JKN$ and $\dist_{\mathrm{eu}}$ is the standard euclidean distance.
\end{definition}

\begin{remark}\label{rem:Sphere}
When $N\in\N$, $\mKN([0,x])$ represents the measure of the geodesic ball of radius $x$ on the $N$-dimensional sphere of Ricci curvature $K$, endowed with the canonical metric. Notice however that \cref{def:model_space} makes sense when $N$ is not a natural number as well.
\end{remark}

\begin{notation}\label{notation:HKN}
For the sake of convenience, we  will also denote by $H_{K,N}$ the cumulative distribution function of $\meas_{K,N}$, \ie{}:
	\begin{equation*}
	H_{K,N}(x)\doteq \meas_{K,N}([0,x])=\int_0^x h_{K,N}(t)\,\de t.
	\end{equation*}
\end{notation}

The following \namecref{lem:asympt} is an elementary consequence of the definitions of $\hKN$ and $\HKN$:

\begin{lemma}\label{lem:asympt}
Let $K>0$ and $N\in (1,\infty)$ be fixed. Then:
	\begin{enumerate}
	\item If $\gamma_1(K,N)\doteq \frac{1}{\cKN}\pths*{\frac{K}{N-1}}^{\frac{N-1}{2}}$, then
		\begin{equation*}
		\lim_{t\to 0^+} \frac{\hKN(t)}{t^{N-1}}=\gamma_1(K,N),  \quad \text{and} \quad   \hKN(t)\leq \gamma_1(K,N) t^{N-1} \quad \forall t\in\JKN.
		\end{equation*}			  
  		Moreover, for any $r_{1}\in \left(0, \pi \sqrt{\frac{N-1}{K}} \right)$ there exists $C=C(r_{1},K,N)>0$ such that
		\begin{equation*}
		h_{K,N}(t)\geq C\, t^{N-1}, \quad \forall t\in (0,r_{1}).
		\end{equation*}		   
	\item $\HKN$ is invertible on $\JKN$; moreover, if $\gamma_2(K,N)\doteq \frac{\gamma_1(K,N)}{N}$:
	\begin{align*}
		\lim_{t\to 0^+} \frac{\HKN(t)}{t^N}&=\gamma_2(K,N),
		\quad &\text{and} \quad
		\HKN(t)&\leq \gamma_2(K,N) t^{N}  \quad \forall t\in\JKN;\\
		\lim_{t\to 0^+} \frac{\HKN^{-1}(t)}{t^{\frac{1}{N}}}&=\frac{1}{\gamma_2(K,N)^{\frac{1}{N}}},
		\quad &\text{and} \quad
		\HKN^{-1}(t)&\geq \frac{1}{\gamma_2(K,N)^{\frac{1}{N}}} t^{\frac{1}{N}} \quad \forall t\in(0,1).
	\end{align*}
	\end{enumerate}
\end{lemma}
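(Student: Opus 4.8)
The plan is to reduce every assertion to two elementary facts about the sine function, namely $\sin x\le x$ for $x\ge0$ and $\lim_{x\to0^+}\frac{\sin x}{x}=1$, together with the observation that $x\mapsto\frac{\sin x}{x}$ is positive and strictly decreasing on $(0,\pi)$ (which one checks from $\frac{\de}{\de x}\frac{\sin x}{x}=\frac{x\cos x-\sin x}{x^2}$ and $\frac{\de}{\de x}(x\cos x-\sin x)=-x\sin x<0$). Write $a\doteq\sqrt{K/(N-1)}$, so that $\JKN=[0,\pi/a]$. For part~(1) I would factor, for $t\in(0,\pi/a]$,
\[
\frac{\hKN(t)}{t^{N-1}}=\frac{1}{\cKN}\pths*{\frac{\sin(at)}{t}}^{N-1}=\gamma_1(K,N)\pths*{\frac{\sin(at)}{at}}^{N-1},
\]
using $a^{N-1}=(K/(N-1))^{(N-1)/2}$. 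Since $at\in(0,\pi]$ on this range, the bracketed factor is at most $1$, which yields at once $\hKN(t)\le\gamma_1(K,N)\,t^{N-1}$ on all of $\JKN$ (both sides vanish at $t=0$); letting $t\to0^+$ and using $\frac{\sin x}{x}\to1$ gives the limit $\gamma_1(K,N)$. For the lower bound, if $r_1<\pi/a$ then $at\le ar_1<\pi$ for $t\in(0,r_1)$, so by monotonicity $\frac{\sin(at)}{at}\ge\frac{\sin(ar_1)}{ar_1}>0$ there; hence $\hKN(t)\ge C\,t^{N-1}$ on $(0,r_1)$ with $C\doteq\gamma_1(K,N)\bigl(\sin(ar_1)/(ar_1)\bigr)^{N-1}>0$.

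For part~(2), the invertibility of $\HKN$ is immediate: $\hKN>0$ on the interior of $\JKN$, so $\HKN$ is continuous and strictly increasing with $\HKN(0)=0$ and $\HKN(\pi/a)=\int_{\JKN}\hKN=1$, hence a homeomorphism of $\JKN$ onto $[0,1]$. Integrating the bound from part~(1) gives $\HKN(t)=\int_0^t\hKN\le\gamma_1(K,N)\,t^N/N=\gamma_2(K,N)\,t^N$ on $\JKN$, and $\lim_{t\to0^+}\HKN(t)/t^N=\gamma_2(K,N)$ follows from de l'H\^opital's rule combined with part~(1) (equivalently, by integrating the asymptotics of $\hKN$). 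Substituting $t=\HKN^{-1}(s)$ into $\HKN(t)\le\gamma_2(K,N)\,t^N$ yields $s\le\gamma_2(K,N)\bigl(\HKN^{-1}(s)\bigr)^N$, i.e.\ $\HKN^{-1}(s)\ge\gamma_2(K,N)^{-1/N}s^{1/N}$ for $s\in(0,1)$; and since $\HKN^{-1}(s)\to0$ as $s\to0^+$,
\[
\frac{\HKN^{-1}(s)}{s^{1/N}}=\pths*{\frac{\bigl(\HKN^{-1}(s)\bigr)^N}{\HKN\bigl(\HKN^{-1}(s)\bigr)}}^{1/N}\xrightarrow[\,s\to0^+\,]{}\gamma_2(K,N)^{-1/N},
\]
which is the remaining claim.

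I do not anticipate any real obstacle: the result is, as advertised, a bookkeeping exercise in one-variable calculus. The only points that warrant a little care are keeping track of the normalizing constant $\cKN$ so that the limits come out to exactly $\gamma_1(K,N)$ and $\gamma_2(K,N)=\gamma_1(K,N)/N$, and noting that the constant $C$ in the lower bound of part~(1) necessarily degenerates as $r_1\uparrow\pi\sqrt{(N-1)/K}$ (the density vanishes at the right endpoint), which is precisely why it must be allowed to depend on $r_1$.
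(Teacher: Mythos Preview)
Your proof is correct in all details. The paper does not actually supply a proof of this lemma---it merely states that the result ``is an elementary consequence of the definitions of $\hKN$ and $\HKN$''---so your argument fills in precisely the one-variable calculus the authors chose to omit.
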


For the model space $\JKN$ defined before, we can find an almost explicit expression for the isoperimetric profile.

\begin{lemma}[Isoperimetric profile of $\JKN$]
	\label{lemma:isoprof_model}
The isoperimetric profile $\mcI_{K,N}$ of the model space 
\begin{equation*}
(J_{K,N}, \dist_{\mathrm{eu}}, \meas_{K,N})
\end{equation*}
is given by the following formula:
	\begin{equation*}
	\mcI_{K,N}(v)=\hKN(\HKN^{-1}(v)), \quad v\in [0,1].
	\end{equation*}
Moreover, the $\inf$ in \cref{eq:iso_profile} is attained at the intervals
\begin{equation}\label{eq:isoper_interv}
(0,\HKN^{-1}(v)) \quad \text{and} \quad (\HKN^{-1}(1-v),\DKN),
\end{equation}
where $\DKN\doteq \pi \sqrt{\frac{N-1}{K}}$.
\end{lemma}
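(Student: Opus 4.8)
The plan is to transport the one-dimensional isoperimetric problem to the fixed interval $[0,1]$ equipped with Lebesgue measure by means of the cumulative distribution function $\HKN$. Since $\HKN\colon\JKN\to[0,1]$ is an increasing homeomorphism which pushes $\mKN$ forward to the Lebesgue measure on $[0,1]$, to every Borel $E\subseteq\JKN$ with $\mKN(E)=v$ there corresponds $F\doteq\HKN(E)\subseteq[0,1]$ with $\leb^1(F)=v$; moreover, identifying $E$ with a good representative which is an at most countable union of disjoint intervals, the perimeter transports to $\Per(E)=\sum_{y\in\partial F\cap(0,1)}\psi(y)$, where $\psi\doteq\hKN\circ\HKN^{-1}\colon[0,1]\to[0,\infty)$ is precisely the function that we want to show equals the isoperimetric profile. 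In this language the claim becomes: $\sum_{y\in\partial F\cap(0,1)}\psi(y)\ge\psi(v)$ for every such $F$, with equality for $F=(0,v)$ and for $F=(1-v,1)$.

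The key structural input is that $\psi$ is continuous, vanishes at $0$ and $1$, is strictly positive on $(0,1)$, is \emph{symmetric} (i.e.\ $\psi(v)=\psi(1-v)$), and is \emph{concave}. The symmetry follows from $\hKN(t)=\hKN(\DKN-t)$ together with $\HKN(\DKN-t)=1-\HKN(t)$. Concavity reduces to log-concavity of $\hKN$ on the interior of $\JKN$: a short computation gives $\psi''(v)=(\log\hKN)''(\HKN^{-1}(v))\cdot\hKN(\HKN^{-1}(v))^{-1}$, and $(\log\hKN)''(t)=-K\csc^2\!\big(t\sqrt{K/(N-1)}\big)<0$. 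From concavity and $\psi(0)=0$ one obtains subadditivity, $\psi(x+y)\le\psi(x)+\psi(y)$ whenever $x,y\ge 0$ and $x+y\le 1$ (and, by continuity, its countable version $\psi(\sum_l x_l)\le\sum_l\psi(x_l)$ when $\sum_l x_l\le 1$).

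With these facts the lower bound is immediate. Write $F=\bigsqcup_l K_l$ with the $K_l$ pairwise disjoint intervals, so $\sum_l\leb^1(K_l)=v$, and observe that distinct components have distinct endpoints (they are separated by nonempty gaps), so $\Per(E)$ is the sum over $l$ of the contribution of the endpoints of $K_l$ lying in $(0,1)$. If $K_l=(c,d)\subseteq(0,1)$, then by subadditivity and symmetry $\psi(c)+\psi(d)=\psi(c)+\psi(1-d)\ge\psi\big(c+(1-d)\big)=\psi\big(1-(d-c)\big)=\psi(\leb^1(K_l))$; if instead $K_l$ abuts $0$ or $1$ its single interior endpoint contributes exactly $\psi(\leb^1(K_l))$ (again using symmetry in the second case). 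Summing over $l$ and applying subadditivity once more gives $\Per(E)\ge\sum_l\psi(\leb^1(K_l))\ge\psi\big(\sum_l\leb^1(K_l)\big)=\psi(v)=\hKN(\HKN^{-1}(v))$. For the matching upper bound and the identification of the optimizers, one checks directly that $E=(0,\HKN^{-1}(v))$ has measure $v$ and perimeter $\hKN(\HKN^{-1}(v))$, and that $E=(\HKN^{-1}(1-v),\DKN)$ has measure $v$ and perimeter $\hKN(\HKN^{-1}(1-v))=\hKN(\DKN-\HKN^{-1}(v))=\hKN(\HKN^{-1}(v))$.

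The only genuinely delicate point is the reduction made at the outset: for an arbitrary set of finite $\mKN$-perimeter one must produce a representative which is an (at most countable) union of intervals whose $\mKN$-perimeter is the sum of $\hKN$ over the interior boundary points. In one dimension this is standard — on each $[\epsilon,\DKN-\epsilon]$ the density $\hKN$ is bounded below, so $E$ has finite Euclidean perimeter there and is a finite union of intervals — but some care is needed as $\epsilon\to 0^+$, since $\hKN$ degenerates at the endpoints and components may accumulate at $0$ or $\DKN$; note, however, that if endpoints accumulate anywhere in $(0,1)$ after transport then $\Per=+\infty$ and there is nothing to prove, and the estimate above is arranged precisely so as to tolerate countably many components otherwise. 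Everything else is elementary.
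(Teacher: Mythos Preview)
Your argument is correct and takes a genuinely different route from the paper's. The paper first studies single intervals by direct analysis of the function $p_v(x)=\hKN(x)+\hKN(f_v(x))$ (showing it has a unique interior maximum, hence the minimum lies at the endpoints), then reduces finite unions of intervals to a single interval by an iterative sliding procedure (at each step two neighbouring intervals are merged, moving one left or right according to whether its left endpoint lies below or above the critical value $x_v$), and finally handles countable unions by a compactness reduction to the finite case. You instead push the whole problem to $([0,1],\leb^1)$ via $\HKN$, observe that $\psi=\hKN\circ\HKN^{-1}$ is symmetric and \emph{concave} with $\psi(0)=\psi(1)=0$, and exploit the resulting subadditivity to bound the contribution of each component from below by $\psi(\text{length})$ in one stroke; a second application of subadditivity then gives $\Per(E)\ge\psi(v)$ directly, for finite and countable unions alike.

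What each approach buys: your concavity/subadditivity argument is shorter and more conceptual, and it makes transparent why the isoperimetric profile is concave (this is exactly the log-concavity of $\hKN$). The paper's sliding argument is more hands-on but has the virtue of not requiring the second-derivative computation; it also yields slightly finer information, namely the location of the unique interior \emph{maximizer} among intervals of given measure. Both proofs rely on the same structural reduction (a finite-perimeter set is $\mKN$-equivalent to a countable union of disjoint intervals, with perimeter the sum of $\hKN$ over interior endpoints), which the paper imports from \cite{Cavalletti2018a}; your sketch of this reduction via the local boundedness of $\hKN$ away from the endpoints is adequate for the present purposes.
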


In other words: $\mcI_{K,N}(v)$ coincides with the density function computed at the point $x$ such that $\mKN([0,x])=v$.

\begin{proof}
The proof is a slight modification of  \cite{Bobkov1996}, we include it here for the reader's convenience.
Thanks to \cite[Proposition 3.1]{Cavalletti2018a}, we know that if $E$ has finite perimeter in $\JKN$, then it is $\mKN$-equivalent to a countable union of closed disjoint intervals, i.e. there exists a sequence of pairwise disjoint intervals $\brcs*{\bkts*{a_i, b_i}}_{i\in\N}$ such that $[a_i,b_i]\subset {\JKN}$ and
	\begin{equation}\label{eq:disj_intervals}
	\mKN\pths*{E\triangle \bigcup_{i \in\N}[a_i,b_i]}=0,
	\end{equation}
thus it suffices to consider such unions.
Moreover, by the same result, if \cref{eq:disj_intervals} holds then one has:
	\begin{equation*}
	\Per (E) = \sum_{i=0}^\infty \pths*{\hKN(a_i)+\hKN(b_i)}.
	\end{equation*}
	
\proofstep{Step 1:} We claim that the intervals in \cref{eq:isoper_interv} are minimal among the class of closed intervals.
Let $v\in(0,1)$; notice that the problem trivializes at $0$ and $1$. We denote by $f_v:(0,\HKN^{-1}(1-v))\to {\JKN}$ the function defined by
	\begin{equation*}
	f_v(x)\doteq \HKN^{-1}\pths*{\HKN(x)+v},
	\end{equation*}
that is: $f_v(x)$ is the unique element of ${\JKN}$ such that the interval $(x, f_v(x))$ has measure $v$. Notice that $\HKN(f_v(x))-\HKN(x)=v$, thus $\hKN(f_v(x))f_v'(x)=\hKN(x)$. 

Moreover, we denote by $p_v: (0,\HKN^{-1}(1-v))\to (0,+\infty)$ the function
	\begin{equation*}
	p_v(x)\doteq \Per \pths*{ (x, f_v(x))} = \hKN(x)+\hKN(f_v(x)).
	\end{equation*}
By differentiating with respect to $x$, one finds:
 	\begin{equation*}
 	p_v'(x)=\hKN'(x)+\hKN'(f_v(x))f_v'(x)=\hKN(x)\pths*{\frac{\hKN'(x)}{\hKN(x)}+\frac{\hKN'(f_v(x))}{\hKN(f_v(x))}}.
 	\end{equation*}
By easy computations, one can see that the map $z\mapsto \frac{\hKN'(z)}{\hKN(z)}$ coincides with 
	\begin{equation*}
	z \mapsto	{\sqrt{K(N-1)}} \cot \pths*{\sqrt{\tfrac{K}{N-1}}z},
	\end{equation*}
thus it is always decreasing; on the other hand, $f_v(\cdot)$ is strictly increasing. As a consequence, the map $x\to \frac{p_v'(x)}{\hKN(x)}$ is strictly decreasing; moreover, it tends to $+\infty$ when $x\downarrow 0$, while it tends to $-\infty$ when $x\uparrow \HKN^{-1}(1-v)$. This means there exists a value $x_v$ such that $p_v'>0$ on $(0,x_v)$ and $p_v'<0$ on $\pths[\big]{x_v, \HKN^{-1} (1-v)}$; noting that by symmetry $p_v(0)=p_v(\HKN^{-1}(1-v))$, we conclude that the sets in \cref{eq:isoper_interv} are minimal for the perimeter among intervals.

We also notice that 
	\begin{equation}\label{eq:x_v}
	x_v < \frac{\DKN}{2}
	\quad \text{and} \quad
	f(x_v)>\frac{\DKN}{2}
	\end{equation}
must hold. Indeed, by exploiting the symmetry of $\hKN$ (\ie{}, the fact that $\hKN(\DKN-x)=\hKN(x)$ for any $x\in\JKN$), it is easy to see that for any $x\in \pths*{0,\HKN^{-1}(1-v)}$ the identity 
	\begin{equation*}
	\mKN([\DKN-f_v(x),\DKN-x])=v
	\end{equation*}
holds, so that $f_v(\DKN-f_v(x))=\DKN-x$. As a consequence,
 	\begin{equation*}
 	\begin{split}
 	p_v(x) 	&= \hKN(\DKN-x) + \hKN(\DKN-f_v(x))\\
 			&= \hKN(\DKN-f_v(x))+\hKN(f_v(\DKN-f_v(x))) =p_v(\DKN-f_v(x)).
	\end{split}
 	\end{equation*}
Since $p_v$ attains its maximum uniquely at $x_v$, it must hold that $\DKN-f_v(x_v)=x_v$. This, combined with the fact that $f(x_v)>x_v$, proves \cref{eq:x_v}.	 

\proofstep{Step 2:} We claim that the intervals in \cref{eq:isoper_interv} are also minimal among finite unions of closed intervals. Let now 
	\begin{equation*}
	E=\bigcup_{i=1}^n [a_i, b_i],\quad n\geq 2,
	\end{equation*}
with $a_1\geq 0$, $b_n\leq \DKN$, and $b_{i-1}<a_i<b_i<a_{i+1}$. Denote by $v_i$ the measure $\mKN([a_i,b_i])$. 

We will move each interval to the \emph{left} or to the \emph{right}, keeping the measure constant and lowering the perimeter. Notice that at least one of the following conditions holds true:
	\begin{equation*}
	a_1< x_{v_1} \qquad \text{or} \qquad a_n > x_{v_n};
	\end{equation*}
indeed, if $a_1\geq x_{v_1}$, then $a_n> b_1\geq f(x_{v_1})>\frac{\DKN}{2}> x_{v_n}$ (here \cref{eq:x_v} has been used). Up to a reflection, we can assume without loss of generality that $a_1 < x_{v_1}$. Then we define $E_0$ as
	\begin{equation*}
	E_0\doteq [0,f_{v_1}(0)]\cup \bigcup_{i=2}^n [a_i,b_i].
	\end{equation*}
$E_0$ now has the same measure as $E$ and smaller perimeter. If $n=2$, we skip to the end of the procedure; if otherwise $n>2$, we proceed inductively in the following way: at each step $1\leq j\leq n-2$, the set $E_{j-1}$  will be the union of $n+1-j$ closed intervals:
	\begin{equation*}
	E_{j-1}=\bigcup_{i=1}^{n+1-j} [a^{j}_i, b^{j}_i],\qquad v_i^{j}\doteq \mKN \pths*{[a^{j}_i, b^{j}_i]},
	\end{equation*}
with $a^{j}_1=0$. We consider the second of those intervals: 
	\begin{itemize}
		\item if $a^{j}_2\leq x_{v_2^{j}}$, then we replace $[0, b^{j}_1]$ and $[a^{j}_2, b^{j}_2]$ with $[0, f_{v_1^j+v_2^j}(0)]$.
		\item if $a^{j}_2 > x_{v_2^{j}}$, then we replace $[a^{j}_2, b^{j}_2]$ and $[a^{j}_3, b^{j}_3]$ with $\bkts*{f^{-1}_{v_2^j+v_3^j}( b^{j}_3), b^{j}_3}$.
	\end{itemize}
The new set $E_j$ is a union of $n-j$ closed intervals, having the same $\mKN$-measure of $E_{j-1}$ and smaller or equal perimeter.

At the end of the procedure, we are left with the union of two intervals; applying the same argument once again, the final set $\tilde{E}$ is either the interval $[0, f_v(0)]$ (in which case the claim is proven), or a union of type $[0, \tilde{b}]\cup [\tilde{a},\DKN]$. In the latter case, however, we can repeat the above argument for the interval $[\tilde{b},\tilde{a}]$ and the measure $1-v$: we move it to the left or to the right applying the same criterion as before, and take the complementary in $\JKN$. This is an interval of the same type as \cref{eq:isoper_interv}, with the same measure of $E$ but lower perimeter.

\proofstep{Step 3:} Finally, we show that the intervals in \cref{eq:isoper_interv} are also minimal among countable unions of disjoint intervals. Assume $E=\bigcup_{i\in\N}[a_i,b_i]$. Since $E$ has finite perimeter, the only accumulation points for the $a_i$'s can be $0$ and $\DKN$. Assume $0$ is an accumulation point; fix $\bar{\imath}$ such that $b_{\bar{\imath}}<\frac{\DKN}{4}$ and let $I\doteq \brcs*{i\in\N\stset b_i \leq b_{\bar{\imath}}}$. Let $\bar{E}\doteq \bigcup_{i\in I}[a_i, b_i]$ and $\bar{v}\doteq \mKN(\bar{E})$. The set 
	\begin{equation*}
	[0,f_{\bar{v}}(0)]\cup \bigcup_{i\in\N\setminus I}[a_i, b_i]
	\end{equation*}
has the same measure and lower perimeter than $E$. Repeating, if necessary, the procedure at $\DKN$, we find a set which is a finite union of closed intervals and lowers the perimeter of $E$, so we can recover the result from \proofstep{Step 2}.
\end{proof}


\subsection{Rearrangements and symmetrizations}
Throughout the section $\Xdm$ will be a metric measure space with $\meas(\Xmms)=1$ and $\Omega\subset \Xmms$ an open subset.

\label{subsec:rearr}

\begin{definition}[Distribution function]
	Let $u:\Omega\to \R$ be a measurable function. We define its \emph{distribution function} $\mu=\mu_u:[0,+\infty) \to [0,\meas(\Omega)]$ as 
	\begin{equation*}
	\mu(t)\doteq \meas (\brcs{\abs{u}>t}).
	\end{equation*}
\end{definition}

\begin{remark}\label{remark:diff_kes}
Our definition of distribution function differs from the one adopted in \cite{Kesavan2006}, but coincides instead with the one used in the original paper by Talenti \cite{Talenti1979}: indeed,  \cite{Kesavan2006} defines $\mu(t)$ as the measure of the superlevel $\brcs{u>t}$ for any $t\in\R$.
\end{remark}

\begin{definition}[Decreasing rearrangement $u^\sharp$]
	Let $u:\Omega\to \R$ be a measurable function. We define $u^\sharp: [0,\meas(\Omega)]\to {[0,\infty]}$ as
	\begin{equation}\label{eq:Defusharp}
		u^\sharp(s)\doteq
		\begin{cases}
		\ess \sup { |u|}		&\text{if $s=0$}\\
		\inf\brcs*{t\in[0,+\infty) \stset \mu_u(t)<s} 	&\text{if $s>0$}
		\end{cases}.
	\end{equation}	 
\end{definition}

The decreasing rearrangement $u^\sharp$ plays the role of a generalized inverse of the distribution function $\mu=\mu_u$: 
\begin{itemize}
\item if $\mu$ is continuous at $\bar{t}$ with $\mu(\bar{t})=\bar{s}$, and $\mu$ is not constant in any interval of the type $[\bar{t},\bar{t}+\delta)$ with $\delta>0$, then $u^\sharp (\bar{s})=\bar{t}$;
\item if $\mu$ is continuous at $\bar{t}$ with $\mu(\bar{t})=\bar{s}$, and $[\bar{t},\bar{t}+\bar{\delta})$ is the largest interval of this type on which $\mu$ is constant, then $u^\sharp(\bar{s})=\bar{t}+\bar{\delta}$;
\item if $\mu$ has a jump discontinuity at $\bar{t}$, with
	$
	\lim_{\tau\to \bar{t}^\pm}\mu(\tau)=\bar{s}^\pm,
	$
then $u^\sharp(s)=\bar{t}$ for any $s\in (\bar{s}^-,\bar{s}^+]$.
\end{itemize}
As the name itself suggests, $u^\sharp$ can be easily shown to be non-increasing; moreover, it is by definition left-continuous.

Finally, we define the \emph{Schwarz symmetrization} of a function $u$ defined on a $\RCD(K,N)$ space. Notice that the condition $\CD(K,N)$ on curvature and dimension, together with the assumption that $\Xdm$ is essentially non-branching, would be enough to ensure a \PSz{} inequality, as shown in \cite{Mondino2019}. 

\begin{definition}[$(K,N)$-Schwarz symmetrization]
Let $\Xdm$ be a \mms{} satisfying the $\RCD(K,N)$ condition for some $K>0$ and $N\in (1,\infty)$. Let $\Omega\subset \Xmms$  be a Borel subset with measure $\meas(\Omega)=v\in [0,1]$ and $u:\Omega\to \R$ be a Borel measurable function. Let $R=R_{K,N,v}>0$ be such that $\mKN([0,R])=v$. We define the $(K,N)$-Schwarz symmetrization $u^\star_{K,N}=u^\star:[0,R]\to[0,\infty]$ as $u^\star\doteq u^\sharp \circ \HKN$; explicitly:
	\begin{equation}\label{eq:defustar}
	u^\star(x)\doteq u^\sharp (\mKN([0,x])).
	\end{equation} 
\end{definition} 

\begin{remark}
Being the composition of $\HKN$, which is increasing, and $u^\sharp$, which is non-increasing, $u^\star$ is still a non-increasing function. 
\end{remark}

We state here a collection of useful facts concerning the decreasing rearrangement of a function: 
these are quite standard and can be found for instance in \cite[Chapter 1]{Kesavan2006} in the context of Euclidean spaces (grounding on a slightly different definition of $\mu_u$, see our \cref{remark:diff_kes}); the proofs contained there still work with very few straightforward modifications.

\begin{proposition}\label{prop:equimeas}
Let $\Xdm$ be an $\RCD(K,N)$ metric measure space for some $K>0$ and $N\in (1,\infty)$,  with $\meas(\Xmms)=1$,  let $\Omega\subset \Xmms$ be a Borel subset  with measure $\meas(\Omega)=v\in [0,1]$ and let $u:\Omega\to \R$ be a measurable function; let $u^\sharp:[0,v]\to { [0,\infty]}$ be its decreasing rearrangement and $u^\star:[0,\HKN^{-1} (v)] \to { [0,\infty]}$ be its $(K,N)$-Schwarz symmetrization. Denote $R=R_{K,N,v}\doteq \HKN^{-1} (v)$.
	\begin{enumerate}[(a)]
	\item\label{item:equimeas_a} $u$, $u^\sharp$ and $u^\star$ are \emph{equimeasurable}, in the sense that
		\begin{equation*}
		\meas\pths*{\brcs{\abs{u}>t}}=\leb^1\pths{\brcs{u^\sharp >t}}=\mKN\pths{\brcs{u^\star >t}}
		\end{equation*}
		for all $t>0$. The same identities hold true with the symbols $\geq$, $<$,  $\leq$ instead of $>$.
	\item\label{item:equimeas_b}  If $u\in L^p(\Omega,\meas)$ for some $1\leq p\leq \infty$, then $u^\sharp \in L^p([0,v],\leb^1)$ and $u^\star \in L^p([0,R_{K,N,v}],\mKN)$. The converse implications also hold. In that case, moreover,
	\begin{equation*}
	\norm*{u}_{L^p(\Omega,\meas)}=\norm{u^\sharp}_{L^p([0,v],\leb^1)}=\norm{u^\star}_{L^p([0,R_{K,N,v}],\mKN)}.
	\end{equation*}
	\item If $u, v \in L^p(\Omega,\meas)$ for some $1\leq p \leq \infty$, then 
		\begin{equation*}
		\norm[\big]{u^\star-v^\star}_{L^p([0,R_{K,N,v}],\mKN)}=\norm[\big]{u^\sharp-v^\sharp}_{L^p([0,v],\leb^1)}\leq \norm*{u-v}_{L^p(\Omega,\meas)}.
		\end{equation*}
	\end{enumerate}
\end{proposition}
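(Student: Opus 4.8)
The plan is to prove the three items in order, following the strategy that each statement reduces, via the definitions of $u^\sharp$ and $u^\star$, to classical facts about the one-dimensional decreasing rearrangement. First, for \ref{item:equimeas_a}, I would analyze the distribution function $\mu_u$ and its generalized inverse $u^\sharp$ case by case exactly as in the trichotomy listed before \cref{prop:equimeas} (continuity with no plateau, continuity with a plateau, jump discontinuity), and check directly that $\leb^1(\{u^\sharp > t\}) = \mu_u(t)$ for each $t > 0$; the key point is that $u^\sharp$ is the right-continuous generalized inverse in the sense that $\{s : u^\sharp(s) > t\} = [0, \mu_u(t))$ up to the endpoint, whence $\leb^1$ of that set equals $\mu_u(t)$. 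The identity for $u^\star$ then follows immediately from $u^\star = u^\sharp \circ \HKN$ together with the fact that $\HKN : \JKN \to [0,1]$ is an increasing bijection (by \cref{lem:asympt}) pushing $\mKN$ forward to $\leb^1$ on $[0,v]$: indeed $\mKN(\{u^\star > t\}) = \mKN(\{x : u^\sharp(\HKN(x)) > t\}) = \mKN(\HKN^{-1}(\{u^\sharp > t\})) = \leb^1(\{u^\sharp > t\})$. The variants with $\geq, <, \leq$ follow by the same argument applied to the left/right limits, using left-continuity of $u^\sharp$.

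For \ref{item:equimeas_b}, I would invoke Cavalieri's principle: for any $1 \le p < \infty$ and any nonnegative measurable $w$ on a measure space $(Z, \nu)$ one has $\int_Z |w|^p \, d\nu = p \int_0^\infty t^{p-1} \nu(\{|w| > t\}) \, dt$. Applying this to $u$ on $(\Omega, \meas)$, to $u^\sharp$ on $([0,v], \leb^1)$, and to $u^\star$ on $([0,R], \mKN)$, the three integrals coincide because the three superlevel measures coincide by \ref{item:equimeas_a}; hence membership in $L^p$ and the equality of norms follow simultaneously, in both directions. The case $p = \infty$ is handled separately and is immediate from the definition $u^\sharp(0) = \ess\sup |u|$ together with equimeasurability, which forces $\ess\sup u^\sharp = \ess\sup u^\star = \ess\sup |u|$.

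For the final inequality, the identity $\|u^\star - v^\star\|_{L^p([0,R], \mKN)} = \|u^\sharp - v^\sharp\|_{L^p([0,v], \leb^1)}$ is again just the change of variables $x \mapsto \HKN(x)$ pushing $\mKN$ to $\leb^1$, applied to the function $u^\sharp - v^\sharp$ (note $u^\star - v^\star = (u^\sharp - v^\sharp)\circ \HKN$). The substantive inequality $\|u^\sharp - v^\sharp\|_{L^p([0,v], \leb^1)} \le \|u - v\|_{L^p(\Omega, \meas)}$ is the classical nonexpansivity of the decreasing rearrangement map in $L^p$, which I would cite from \cite[Chapter 1]{Kesavan2006} (adjusting for the difference in the definition of the distribution function noted in \cref{remark:diff_kes}) rather than reprove; its proof rests on the Hardy-Littlewood inequality and the layer-cake representation.

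\textbf{Main obstacle.} The only genuine care needed is bookkeeping in item \ref{item:equimeas_a}: getting the endpoints right in $\{s : u^\sharp(s) > t\}$ across jumps and plateaus of $\mu_u$, and making sure the measure-zero discrepancies do not propagate — but since we only ever integrate against $\leb^1$ or $\mKN$, single-point ambiguities are harmless. Everything else is either a direct change of variables using that $\HKN$ is a measure-isomorphism onto $([0,v], \leb^1)$, or a citation of the standard Euclidean theory, which transfers verbatim because, once rephrased on the interval $[0,v]$ with Lebesgue measure, the setting is literally the Euclidean one.
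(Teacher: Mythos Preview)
Your proposal is correct and matches the paper's approach: the paper does not give a proof of this proposition at all, but simply states that these facts are standard and refers to \cite[Chapter 1]{Kesavan2006}, noting that the proofs there carry over with straightforward modifications. Your sketch is exactly the kind of straightforward verification the paper alludes to, and the key structural observation you isolate --- that $\HKN$ pushes $\mKN$ forward to $\leb^1$, so that everything about $u^\star$ reduces to the classical one-dimensional theory for $u^\sharp$ --- is precisely the point.
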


\begin{lemma}\label{lemma:KesProp1_2_2}
Let $\Omega\subset \Xmms$ have finite measure; let $f:\Omega\to\R$ be integrable and let $E\subset \Omega$ be measurable. Then:
	\begin{equation*}
	\int_E f \dmeas	\leq \int_0^{\meas(E)} f^\sharp (s) \, \de s.
	\end{equation*} 
Moreover, if $f$ is non-negative, equality holds if and only if $(\restr{f}{E})^\sharp\equiv \restr{(f^\sharp)}{[0,\meas\pths{E}]}$.
\end{lemma}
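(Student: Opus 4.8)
The plan is to reduce to the "layer-cake" (Cavalieri) representation of the integral and then integrate the inequality level by level. First I would treat the case of a non-negative $f$; the general case follows by splitting $f = f^+ - f^-$ and using that $f^\sharp$ "sees" only $|f|$, or more directly by noting $\int_E f \dmeas \le \int_E f^+ \dmeas$ and applying the non-negative case to $f^+$ together with the observation that $(f^+)^\sharp \le f^\sharp$ (indeed equals $f^\sharp$ wherever $f^\sharp>0$). So assume $f\ge 0$. Write, for $t>0$, $\mu_f(t) = \meas(\{f>t\})$ and recall from \cref{prop:equimeas}\ref{item:equimeas_a} that $f^\sharp$ is equimeasurable with $f$, so $\leb^1(\{f^\sharp > t\}) = \mu_f(t)$ and moreover $\{s : f^\sharp(s) > t\} = [0,\mu_f(t))$ since $f^\sharp$ is non-increasing. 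By Cavalieri's principle applied on $E$,
\begin{equation*}
\int_E f \dmeas = \int_0^\infty \meas(\{f > t\}\cap E)\, \de t,
\end{equation*}
while
\begin{equation*}
\int_0^{\meas(E)} f^\sharp(s)\,\de s = \int_0^\infty \leb^1\bigl(\{s\in[0,\meas(E)] : f^\sharp(s) > t\}\bigr)\, \de t = \int_0^\infty \min\{\mu_f(t),\meas(E)\}\, \de t.
\end{equation*}
Thus the lemma reduces to the pointwise (in $t$) inequality $\meas(\{f>t\}\cap E) \le \min\{\mu_f(t),\meas(E)\}$, which is immediate since $\{f>t\}\cap E$ is contained both in $\{f>t\}$ (giving the bound $\mu_f(t)$) and in $E$ (giving the bound $\meas(E)$).

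For the equality case, observe that equality in the integrated statement forces equality in the integrand for $\leb^1$-a.e. $t>0$, i.e.
\begin{equation*}
\meas(\{f>t\}\cap E) = \min\{\mu_f(t),\meas(E)\}\qquad\text{for a.e. } t>0.
\end{equation*}
I would then unwind what this says about the distribution function of the restriction $g \doteq \restr{f}{E}$ (extended by, say, $0$ outside $E$, or regarded as a function on the measure space $(E,\meas)$): its distribution function is $\mu_g(t) = \meas(\{f>t\}\cap E) = \min\{\mu_f(t),\meas(E)\}$. On the other hand $\restr{(f^\sharp)}{[0,\meas(E)]}$, viewed as a function on $([0,\meas(E)],\leb^1)$, has distribution function $s\mapsto \leb^1(\{f^\sharp>t\}\cap[0,\meas(E)]) = \min\{\mu_f(t),\meas(E)\}$ as computed above. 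Hence $g$ and $\restr{(f^\sharp)}{[0,\meas(E)]}$ are equimeasurable, and since both are non-negative, taking decreasing rearrangements of equimeasurable functions gives the same result; as $\restr{(f^\sharp)}{[0,\meas(E)]}$ is already non-increasing and left-continuous it is its own decreasing rearrangement. Therefore $(\restr{f}{E})^\sharp \equiv \restr{(f^\sharp)}{[0,\meas(E)]}$. The converse is the easy direction: if this identity holds, then $\int_E f\dmeas = \int_0^{\meas(E)} (\restr{f}{E})^\sharp(s)\,\de s = \int_0^{\meas(E)} f^\sharp(s)\,\de s$, using \cref{prop:equimeas}\ref{item:equimeas_b} (the $L^1$ case of the norm identity) for the first equality.

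The only mildly delicate point is the bookkeeping in the equality case: making sure that "equality a.e. in $t$ of the distribution functions" really does translate to equimeasurability of $g$ and $\restr{(f^\sharp)}{[0,\meas(E)]}$, and that one is allowed to conclude from equimeasurability of two \emph{non-negative} functions that their decreasing rearrangements coincide (this uses that the decreasing rearrangement depends only on the distribution function). There is also a minor care needed with the endpoint $t=0$ and with possible jump discontinuities of $\mu_f$, but these are $\leb^1$-null sets of $t$ and do not affect the integrals; the left-continuity of $f^\sharp$ pins down the rearrangement uniquely. None of this is a genuine obstacle — the core of the proof is the one-line pointwise estimate $\meas(\{f>t\}\cap E)\le\min\{\mu_f(t),\meas(E)\}$ — so I would keep the write-up short, citing \cref{prop:equimeas} for equimeasurability and Cavalieri's principle for the layer-cake formula.
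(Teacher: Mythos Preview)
Your proof is correct but organised differently from the paper's. The paper also reduces to $f\geq 0$ (via $\int_E f\leq\int_E|f|$ and $|f|^\sharp=f^\sharp$), but then argues directly at the level of rearrangements: from equimeasurability one has $\int_E f\dmeas=\int_0^{\meas(E)}(\restr{f}{E})^\sharp(s)\,\de s$, and the pointwise inequality $(\restr{f}{E})^\sharp(s)\leq f^\sharp(s)$ follows immediately from the inclusion $\{\restr{f}{E}>t\}\subset\{f>t\}$ and the definition of $(\cdot)^\sharp$ as an infimum. Your approach instead slices in the $t$-variable via Cavalieri and compares distribution functions, $\meas(\{f>t\}\cap E)\leq\min\{\mu_f(t),\meas(E)\}$. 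Both routes are one-line estimates; the paper's is slightly more economical for the equality characterisation (the inequality is already between the two objects one wants to compare, so equality in the integral forces $(\restr{f}{E})^\sharp=f^\sharp$ a.e.\ on $[0,\meas(E)]$, hence everywhere by left-continuity), whereas you need the extra step of translating equality of distribution functions back to equality of rearrangements. Either approach is perfectly adequate here.
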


\begin{proof} 
The proof is analogous to the one proposed in \cite[Chap. 1]{Kesavan2006} in Euclidean setting, we report it briefly for the reader's convenience. Preliminarily, we observe that
	\begin{equation*}
	\int_E f \dmeas	\leq \int_E \abs{f} \dmeas 
	\quad \text{and} \quad
	f^\sharp=\abs{f}^\sharp,
	\end{equation*}
thus we can assume without loss of generality that $f$ is non-negative.
\\First notice that, by equimeasurability,
	\begin{equation}\label{equation:KesProp1_2_2}
	\int_{E} f \dmeas = \int_0^{\meas\pths{E}} (\restr{f}{E})^\sharp (s) \,\de s.
	\end{equation}
Moreover, for any $t\in\R$, we have:
	\begin{equation*}
	\brcs*{x\in E \stset \restr{f}{E}>t} = E\cap \brcs*{x \in \Omega \stset f >t} \subset \brcs*{x \in \Omega \stset f >t},
	\end{equation*}
	thus whenever $s<\meas\pths{E}$:
	\begin{equation*}
	\brcs*{t>0\stset \meas \pths[\big]{ \restr{f}{E}>t}<s} \supset \brcs*{t>0\stset \meas \pths*{ f>t}<s}.
	\end{equation*}
	As a consequence, taking the infimum of the two sets in the previous inclusion, we get the inequality
	\begin{equation*}
	(\restr{f}{E})^\sharp(s)\leq f^\sharp (s),
	\end{equation*}
	which gives, together with \cref{equation:KesProp1_2_2}, the desired result.
\end{proof}

Finally, we give a (necessary and) sufficient condition for a function to coincide with its $(K,N)$-Schwarz symmetrization.

\begin{lemma}\label{lem:decr_rearr}
Let $\phi:\JKN\to [0,+\infty)$ be a {non-increasing} and {non-negative} function. Then $\phi^\star(x)=\phi(x)$ for all $x\in\JKN\setminus L$, where $L$ is a countable set.
\end{lemma}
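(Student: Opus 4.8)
The plan is to show that for a non-increasing, non-negative $\phi:\JKN\to[0,+\infty)$, the distribution function $\mu_\phi$ and the decreasing rearrangement $\phi^\sharp$ are essentially inverse to the cumulative distribution $\HKN$, so that $\phi^\star=\phi^\sharp\circ\HKN$ recovers $\phi$ away from the (countably many) jump points of $\phi$. First I would record the elementary structure of a monotone function: $\phi$ being non-increasing has at most countably many discontinuities; call this set $L_0$. At a continuity point $x$ of $\phi$ at which $\phi$ is, moreover, not constant on any right-neighbourhood $[x,x+\delta)$, the superlevel set $\{\phi>t\}$ with $t=\phi(x)$ is (up to measure zero) the interval $[0,x)$, so $\mu_\phi(\phi(x))=\HKN(x)$; since $\hKN>0$ on the interior of $\JKN$ (by \cref{lem:asympt}), $\HKN$ is strictly increasing there, and the generalized-inverse characterization of $\phi^\sharp$ recorded right after \cref{eq:Defusharp} gives $\phi^\sharp(\HKN(x))=\phi(x)$, i.e. $\phi^\star(x)=\phi^\sharp(\HKN(x))=\phi(x)$.

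Next I would handle the points $x$ where $\phi$ is continuous but is constant on a maximal interval $[x, x+\bar\delta)$ (equivalently, the left endpoints of the "flat stretches" of $\phi$). On such a maximal flat interval $[a,b]$ with value $t=\phi(a)$, the distribution function $\mu_\phi$ has a jump at $t$ (or $\mu_\phi$ is locally constant, depending on the direction), and the second bullet after \cref{eq:Defusharp} yields $\phi^\sharp(\HKN(a))=b$ rather than $a$; thus $\phi^\star$ may differ from $\phi$ only at the countably many left endpoints $\{a\}$ of maximal flat intervals. Since $\phi$ is monotone, its flat intervals with positive length are pairwise disjoint subintervals of $\JKN$, hence at most countably many; call the set of their left endpoints $L_1$. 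Finally, there are the countably many values $t$ at which $\mu_\phi$ jumps (equivalently, the jump points of $\phi$ itself), already absorbed into $L_0$. Setting $L\doteq L_0\cup L_1$, a countable set, one concludes $\phi^\star(x)=\phi(x)$ for all $x\in\JKN\setminus L$.

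The only mildly delicate point — which I would treat as the main obstacle — is the bookkeeping that identifies, for a monotone $\phi$, precisely which $x$ force $\phi^\sharp(\HKN(x))\ne\phi(x)$: one must match the three cases in the description of $\phi^\sharp$ following \cref{eq:Defusharp} (continuity-and-not-locally-constant, continuity-and-locally-constant, jump) against the three possible local behaviours of $\phi$, and check that in each case the exceptional $x$ lie in a set in bijection with a subcollection of disjoint intervals (the flat stretches) or with the jump set of $\phi$. Both of these are countable because a monotone real function has countably many discontinuities and a family of pairwise disjoint nondegenerate intervals in $\R$ is countable. Everything else is a direct unwinding of definitions, using only that $\HKN$ is continuous and strictly increasing on the interior of $\JKN$ (from \cref{lem:asympt}) so that composition with $\HKN$ does not create new pathologies.
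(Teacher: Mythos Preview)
Your overall strategy --- exploit monotonicity of $\phi$ to pin down a countable exceptional set --- is correct and is essentially what the paper does. However, the case analysis you sketch contains genuine errors that would need fixing.

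First, in your opening paragraph the claim ``$\mu_\phi(\phi(x))=\HKN(x)$'' is false in general: if $\phi$ is constant on some left neighbourhood $[a,x]$ (which is not ruled out by your hypothesis that $\phi$ is continuous at $x$ and non-constant on every \emph{right} neighbourhood), then $\{\phi>\phi(x)\}\subset[0,a)$ and hence $\mu_\phi(\phi(x))\leq\HKN(a)<\HKN(x)$. The final conclusion $\phi^\sharp(\HKN(x))=\phi(x)$ does survive, but not for the reason you give.

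Second, in your second paragraph the correspondence between the behaviour of $\phi$ and the three bullets after \cref{eq:Defusharp} is inverted. A flat stretch of $\phi$ on $[a,b]$ produces a \emph{jump} of $\mu_\phi$ at the common value $t_0=\phi(a)$ (third bullet), not a flat stretch of $\mu_\phi$ (second bullet); conversely, jumps of $\phi$ produce flat stretches of $\mu_\phi$. Moreover, the assertion ``$\phi^\sharp(\HKN(a))=b$'' is a type mismatch: $\phi^\sharp$ takes values in the range of $|\phi|$, not in $\JKN$. In fact, using the third bullet, one gets $\phi^\sharp(s)=t_0=\phi(x)$ for every $s=\HKN(x)$ with $x\in(a,b)$, so interior points of flat intervals are \emph{not} exceptional.

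The paper's proof avoids this case-by-case bookkeeping entirely: it first shows the inequality $\phi(\HKN^{-1}(s))\leq\phi^\sharp(s)$ for all $s$ by a direct inclusion argument, and then argues by contradiction that equality holds whenever $\phi$ is left-continuous at $\HKN^{-1}(s)$. This identifies the exceptional set as the (countable) set of points where $\phi$ fails to be left-continuous --- smaller and cleaner than your $L_0\cup L_1$, and it sidesteps matching bullets to local behaviours of $\phi$.
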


\begin{proof}
The claim is equivalent to showing that $\phi^\sharp=\phi\circ \HKN^{-1}$ except on a countable set, that is:
	\begin{equation}\label{eq:decr_rearr}
	\inf \brcs*{t\stset \mKN\pths{\brcs{\phi>t}}<s} = \phi \circ \HKN^{-1}(s) , \qquad s\in [0,1]
	\end{equation}
out of a countable set. Let $L\subset \JKN$ be the set of points where $\phi$ is not left continuous (which is countable since $\phi$ is nonincreasing), and fix any $s\in [0,1] \setminus \HKN(L)$.
\\If $\mKN\pths{\brcs{\phi>t}}<s$ for some $t$, then $\mKN\pths{\brcs{\phi>t}}<\mKN([0,\HKN^{-1}(s)])$ and thus 
$$\brcs*{\phi>t} \subsetneq [0,\HKN^{-1}(s)].$$
We infer that $\phi(\HKN^{-1}(s))\leq t$, and thus
	\begin{equation}\label{eq:phiHKNPf1}
	\phi \circ \HKN^{-1}(s) \leq \inf \brcs*{t\stset \mKN\pths{\brcs{\phi>t}}<s}, \quad \forall s\in [0,1] \setminus \HKN(L).
	\end{equation}
 Assume by contradiction	that the inequality in \eqref{eq:phiHKNPf1} is strict for some $s_{0}\in (0,1]\setminus \HKN(L)$. Then there exists $\epsilon>0$ such that
\begin{equation}\label{eq:phiHKNPf2}
	\phi \circ \HKN^{-1}(s_{0})+\epsilon < \inf \brcs*{t\stset \mKN\pths{\brcs{\phi>t}}<s_{0}}.
	\end{equation}
Since by assumption $\phi$ is left-continuous at $\HKN^{-1}(s_{0})$, we can find $\sigma< s_{0}$ such that 
	\begin{equation}\label{eq:phiHKNPf3}
	\phi \pths{ \HKN^{-1}(\sigma)}< \phi\pths{\HKN^{-1}(s_{0})}+\epsilon.
	\end{equation}
Since $\brcs{\phi>\phi\pths{ \HKN^{-1}(\sigma)}}\subset [0,\HKN^{-1}(\sigma)]$, we infer that
	\begin{equation}\label{eq:phiHKNPf4}
	\mKN(\brcs{\phi>\phi\pths{ \HKN^{-1}(\sigma)}})\leq \sigma < s_{0}.
	\end{equation}
The combination of 	\eqref{eq:phiHKNPf2}, \eqref{eq:phiHKNPf3} and \eqref{eq:phiHKNPf4} yields the contradiction
$$
\phi \pths{ \HKN^{-1}(\sigma)}<  \inf \brcs*{t\stset \mKN\pths{\brcs{\phi>t}}<s_{0}} \leq \phi \pths{ \HKN^{-1}(\sigma)}.
$$
This concludes the proof.
\end{proof}


\subsection{Poisson problem on the model space}
\label{subsec:equation_on_model_space}

As already mentioned, the main content of this note is a comparison between the symmetrization of the solution of an elliptic problem on $\Xdm$ and the solution of a symmetrized problem on the model space. We define here the ``model problem'' on the unidimensional space $\JKN$ (see for example \cite[Section 3]{Ambrosio} for more details about Laplacians on weighted spaces).

\begin{notation}[Sobolev space on $\JKN$]\label{notation:sob_jkn}
For a subinterval $I\subset \JKN$, we define
	\begin{equation*}
	\W (I,\deu,\mKN) \doteq \brcs*{v\in L^2 (I,\mKN) \stset v' \in L^2 (I,\mKN)},
	\end{equation*}
where $v'$ is the distributional derivative defined by
	\begin{equation*}
	\int_I v \phi' \,\de \leb^1 = -\int_I v' \phi \,\de  \leb^1 \quad \forall \phi\in \C^\infty_c(I).
	\end{equation*}
We will also endow such space with the norm
	\begin{equation*}
	\norm*{v}_{\W (I,\deu,\mKN)}\doteq \norm*{v}_{L^2 (I,\mKN)} + \norm*{v'}_{L^2 (I,\mKN)}.
	\end{equation*}
\end{notation}

\begin{remark}
The Sobolev space in  \cref{notation:sob_jkn} coincides with the local Sobolev space already defined in \cref{def:loc_sob_spaces}, specializing the latter to  the metric measure space $(I,\deu,\mKN)$.
\end{remark}

\begin{definition}[Laplacian on the model space]\label{definition:weight_lapl}
Let $K>0$ and $N\in (1,\infty)$. We define the \emph{weighted Laplacian} $\Delta_{K,N}:\C^2(\mathring{J}_{K,N}){ \cap \C^1({J}_{K,N})} \to\C^0(\mathring{J}_{K,N})$ on the interval $\mathring{J}_{K,N}$ as:
	\begin{equation}\label{eq:mod_lap}
	\begin{split}
	\Delta_{K,N} \eta 	\doteq \eta''+(\log (\hKN))'\eta' =		\eta''+ \frac{\hKN'}{\hKN} \eta'.
	\end{split}	
	\end{equation}
\end{definition}

Notice that, for any $\eta{  \in \C^2(\mathring{J}_{K,N}) \cap \C^1({J}_{K,N})}$ and any function $\phi {  \in \C^1(\mathring{J}_{K,N}) \cap \C^0({J}_{K,N})}$, {using that $h_{K,N}=0$ on $\partial J_{K,N}$,} one has
	\begin{equation*}
	\int_{\JKN} \eta' \phi' \dmKN = - \int_{\JKN}\pths*{\phi \eta'' \hKN + \phi\eta' \hKN'}\,\de \leb^1= - \int_{\JKN} \phi\lapKN \eta \dmKN,
	\end{equation*}
consistently with \cref{definition:weight_lapl}.

\begin{remark}
The coefficient $\frac{\hKN'}{\hKN}=(\log \hKN)'$ appearing in \cref{eq:mod_lap} can be computed explicitly: indeed, for any $x\in\mathring{J}_{K,N}$ we have:
	\begin{equation*}
	\frac{\hKN'(x)}{\hKN(x)}=\sqrt{K(N-1)}\cot\pths*{\sqrt{\tfrac{K}{N-1}} x}.
	\end{equation*} 
\end{remark}

Accordingly with \cref{definition:weight_lapl}, given an interval $I\subset \JKN$ and $f\in L^2(I,\mKN)$, we say that a function $w$ is a weak solution to $-\Delta_{K,N}w=f$ in $I$ (with appropriate boundary conditions) if it solves
	\begin{equation*}
	-w''- \frac{\hKN'}{\hKN} w' =f     \quad \text{in $I$}
	\end{equation*}
in a distributional sense. In particular, we will be interested in the following Dirichlet problem:

\begin{definition}\label{def:poiss_model}
Let $I\doteq [0,r_1)$ with $0<r_1<\pi\sqrt{\frac{N-1}{K}}$ and let $f\in L^2(I,\mKN)$. We say that $w\in \W (I,\deu,\mKN)$ is a weak solution to
	\begin{equation}\label{eq:poiss_model}
	\left\lbrace
		\begin{aligned}
		-\Delta_{K,N}w&=f	\qquad \text{in $I=[0,r_1)$}\\
		w(r_1)&=0
		\end{aligned}
	\right.
	\end{equation}
if:
	\begin{enumerate}[(i)]
	\item $\displaystyle \int_{[0,r_1]}w'\phi'\,\dmKN = \int_{[0,r_1]}f \phi \,\dmKN$ for any $\phi\in \C^\infty_c([0,r_1))$;
	\item {Boundary condition:} $w\in \W_0 ([0,r_1),\deu,\mKN)$, where the latter space is the closure of ${\C^\infty_c([0,r_1))}$ in the topology of ${\W (I,\deu,\mKN)}$.
	\end{enumerate}
\end{definition} 

\begin{remark}
The intuition behind this choice of boundary conditions is the following. When $N$ is an integer, we think of $(\JKN, \deu, \mKN)$ as the sphere $\mathbb{S}=\mathbb{S}_{K}^{N}$ of dimension $N$ and Ricci curvature $K$. Consider a geodesic ball $B_{r_1}(p)\subset \mathbb{S}$; we look for radial solutions $\hat{w}(x)=w(\dist(x,p))$ of the Dirichlet problem 
	\begin{equation*}
	\begin{cases}
	-\Delta_{\mathbb{S}} \hat{w}(x) = f(\dist(x,p)) & \text{on $B_{r_1}(p)$}\\
	\hat{w}=0		& \text{on $\partial B_{r_1}(p)$}
	\end{cases}.
	\end{equation*}
Then the condition $w(r_1)=0$ comes from the Dirichlet condition on $\partial B_{r_1}(p)$.
\end{remark}
In the next proposition, we give an explicit solution to the problem in \eqref{eq:poiss_model}. 
\begin{proposition}\label{prop:sol_mod_space}
Let $I=[0,r_1)$ with $0 < r_1 < \pi\sqrt{\frac{N-1}{K}}$. Let $f\in L^2(I,\mKN)$. The problem in \cref{eq:poiss_model} admits a unique weak solution $w\in \W (I,\deu,\mKN)$, which can be represented as
	\begin{equation}\label{eq:repr_form_1}
	w(\rho)	= \int_\rho^{r_1} \frac{1}{\hKN(r)}\int_0^{r} f(s)\,\dmKN(s) \,\de r , \quad \forall  \rho\in[0,r_1],
	\end{equation}
or equivalently as
	\begin{equation}\label{eq:repr_form_2}
	w(\rho)= \int_{\HKN(\rho)}^{\HKN(r_1)} \frac{1}{\isoKN^2(\sigma)}\int_{0}^\sigma f\circ\HKN^{-1}(t)\,\de t \,\de \sigma, \quad \forall  \rho\in[0,r_1].
	\end{equation}
\end{proposition}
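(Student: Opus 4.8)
The plan is to establish existence and uniqueness abstractly via Lax–Milgram, then verify that the explicit formula \eqref{eq:repr_form_1} satisfies both the weak formulation and the boundary condition, and finally observe that \eqref{eq:repr_form_2} is just a change of variables of \eqref{eq:repr_form_1}.

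For existence and uniqueness, I would work in the Hilbert space $\W_0([0,r_1),\deu,\mKN)$ equipped with the inner product $\scal*{v,w} \doteq \int_{[0,r_1]} v'w' \dmKN$. The key point is that this is genuinely an equivalent norm on $\W_0$: one needs a Poincaré-type inequality $\norm*{v}_{L^2(I,\mKN)} \le C \norm*{v'}_{L^2(I,\mKN)}$ for $v \in \W_0$. Since $h_{K,N}$ is bounded above and below by positive constants on compact subintervals of $\mathring J_{K,N}$ but degenerates only at the endpoint $0$ (and $r_1 < \pi\sqrt{(N-1)/K}$ stays away from the other zero), one can write $v(\rho) = -\int_\rho^{r_1} v'$ and estimate using \cref{lem:asympt} (the bound $h_{K,N}(t) \ge C t^{N-1}$ near $0$), via a one-dimensional Hardy inequality with weight $t^{N-1}$; alternatively cite that $(I,\deu,\mKN)$ supports a Poincaré inequality as an $\RCD$-type weighted interval. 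Granting this, the linear functional $\phi \mapsto \int_{[0,r_1]} f\phi\dmKN$ is bounded on $\W_0$ (again by Cauchy–Schwarz and Poincaré), so Lax–Milgram yields a unique $w \in \W_0$ with $\scal*{w,\phi} = \int f\phi\dmKN$ for all $\phi \in \C^\infty_c([0,r_1))$, hence for all $\phi\in\W_0$ by density; this $w$ satisfies (i) and (ii) of \cref{def:poiss_model}.

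It remains to identify $w$ with the right-hand side of \eqref{eq:repr_form_1}. Call that function $\tilde w(\rho) = \int_\rho^{r_1} \frac{1}{h_{K,N}(r)} \int_0^r f\dmKN(s)\,\de r$. First I would check $\tilde w$ is well defined and lies in $\W(I,\deu,\mKN)$: the inner integral $F(r) \doteq \int_0^r f\,\dmKN$ satisfies $|F(r)| \le \norm{f}_{L^2(\mKN)} \sqrt{H_{K,N}(r)} \lesssim r^{N/2}$ by Cauchy–Schwarz and \cref{lem:asympt}, so $F(r)/h_{K,N}(r) \lesssim r^{N/2}/r^{N-1} = r^{1-N/2}$ near $0$, which is integrable for $N < 4$ — hmm, this pointwise bound is not enough for all $N$, so instead I would argue directly that $\tilde w' (r) = -F(r)/h_{K,N}(r)$ and check $\tilde w' \in L^2(I,\mKN)$ by computing $\int_I |F(r)|^2/h_{K,N}(r)^2 \, h_{K,N}(r)\,\de r = \int_I |F(r)|^2/h_{K,N}(r)\,\de r$ and applying the weighted Hardy inequality to $F$ (whose derivative is $f h_{K,N}$) — this is the technical heart of the argument. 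Then $\tilde w(r_1) = 0$ by construction and $\tilde w \in \W_0$ follows by approximating (truncating near the endpoint). Finally, for $\phi\in\C^\infty_c([0,r_1))$, integrating by parts: $\int_{[0,r_1]} \tilde w' \phi' \,h_{K,N}\,\de r = -\int_{[0,r_1]} \tilde w' \phi' \, h_{K,N}\,\de r$ — rather, $\int \tilde w' \phi' \dmKN = \int (-F/h_{K,N})\phi' h_{K,N}\,\de r = -\int F \phi' \,\de r = \int F' \phi\,\de r = \int f h_{K,N} \phi\,\de r = \int f\phi\dmKN$, using $F' = f h_{K,N}$ and that $\phi$ vanishes at both endpoints. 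Hence $\tilde w$ solves the weak formulation, and by the uniqueness already established, $w = \tilde w$.

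For \eqref{eq:repr_form_2}, substitute $\sigma = H_{K,N}(r)$, so $\de\sigma = h_{K,N}(r)\,\de r$ and $h_{K,N}(r) = \isoKN(H_{K,N}(r))$ by \cref{lemma:isoprof_model}; likewise substitute $t = H_{K,N}(s)$ in the inner integral, giving $\int_0^r f(s)\dmKN(s) = \int_0^{H_{K,N}(r)} f(H_{K,N}^{-1}(t))\,\de t$. Then $\frac{1}{h_{K,N}(r)}\,\de r = \frac{1}{\isoKN(\sigma)^2}\,\de\sigma$, and the limits transform as $\rho \leadsto H_{K,N}(\rho)$, $r_1 \leadsto H_{K,N}(r_1)$, yielding exactly \eqref{eq:repr_form_2}. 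I expect the main obstacle to be the rigorous verification that $\tilde w \in \W_0([0,r_1),\deu,\mKN)$ — i.e., the weighted Hardy/Poincaré estimate handling the degeneracy of $h_{K,N}$ at the origin uniformly in $N\in(1,\infty)$ — while everything else is bookkeeping with the fundamental theorem of calculus.
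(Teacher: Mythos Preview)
Your proposal is correct and reaches the same destination, but the route to uniqueness differs from the paper's. You use Lax--Milgram, which commits you to first establishing a weighted Poincar\'e inequality on $\W_0([0,r_1),\deu,\mKN)$; the paper instead argues uniqueness by hand: given any weak solution $w$, a Fubini computation shows $\int_I \bigl(w'\,\hKN + \int_0^{(\cdot)} f\,\dmKN\bigr)\,\phi'\,\de\leb^1 = 0$ for every test $\phi\in\C^\infty_c([0,r_1))$, whence $w'(x)\hKN(x) = -\int_0^x f\,\dmKN + C$ a.e., and $C=0$ is forced because test functions need not vanish at $0$. This ODE-style argument needs no abstract functional analysis and no Poincar\'e. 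Your approach is more modular (and the Poincar\'e inequality is in any case invoked later, e.g.\ in \cref{lem:limit_sol}), while the paper's is more self-contained at this stage. On the regularity of the explicit candidate $\tilde w$: you anticipate a weighted Hardy inequality, but in fact the Cauchy--Schwarz bound $|F(r)|\le\|f\|_{L^2}\HKN(r)^{1/2}$ you already wrote down, combined with the asymptotics $\HKN(r)\lesssim r^{N}$ and $\hKN(r)\gtrsim r^{N-1}$ from \cref{lem:asympt}, already gives $\int_0^{r_1}|\tilde w'|^2\,\dmKN\lesssim\int_0^{r_1} r\,\de r$ and $\int_0^{r_1}|\tilde w|^2\,\dmKN\lesssim\int_0^{r_1}\rho^{3}\,\de\rho$, uniformly in $N\in(1,\infty)$ --- this is exactly what the paper does. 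Your worry about $N\ge 4$ dissolves once you stop aiming for a pointwise bound on $\tilde w(0)$ and estimate the $L^2(\mKN)$ norm directly. The integration-by-parts verification of the weak equation and the change of variables yielding \eqref{eq:repr_form_2} are identical in both treatments.
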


\begin{proof} As a preliminary fact, notice that the two expressions are actually equivalent, since 
	\begin{equation*}
	\begin{split}
	\int_\rho^{r_1} \frac{1}{\hKN(r)}\int_0^{r} f(s)\,\dmKN(s) \,\de r &=  \int_\rho^{r_1} \frac{1}{\hKN^2(r)}\pths*{\int_0^{r} f(s)\hKN(s)\,\de s} \hKN(r)\,\de r\\
	&= \int_{\HKN(\rho)}^{\HKN(r_1)} \frac{1}{\hKN^2(\HKN^{-1}(\sigma))}\int_{0}^\sigma f\circ\HKN^{-1}(t)\,\de t \,\de \sigma
	\end{split}
	\end{equation*}
and by \cref{lemma:isoprof_model} it holds that $\isoKN=\hKN\circ \HKN^{-1}$. We have used the change of variables $t=\HKN(s)$ in the internal integral and the change of variables $\sigma=\HKN(r)$ in the external integral.

We first show that a weak solution must coincide with the function in \cref{eq:repr_form_1}, and then we prove that such function is actually a solution to \cref{eq:poiss_model}.

\proofstep{Step 1:} Let $w\in \W (I,\deu,\mKN)$ be a weak solution to \cref{eq:poiss_model}. We prove that the weak derivative of $w$ coincides $\mKN$-a.e. with the function
	\begin{equation*}
	g(x)\doteq -\frac{1}{\hKN(x)}\int_0^x f(s) \,\dmKN(s).
	\end{equation*}
Indeed, for any test function $\phi\in \C^\infty_c([0,r_1))$ one has, by the Fubini-Tonelli Theorem:
	\begin{equation}\label{eq:ft_weak}
	\begin{split}
	\int_I (-g(x))\phi'(x) \,\dmKN(x) &= \int_0^{r_1} \pths*{\int_0^{r_1} \chi_{[0,x]}(s)f(s) \frac{\phi'(x)}{\hKN(x)} \,\dmKN(s)}\,\dmKN(x)  \\
	&= \int_0^{r_1} f(s) \pths*{\int_s^{r_1}\phi'(x) \,\de \leb^1(x)} \,\dmKN(s) =\\
	&= - \int_0^{r_1} f(s) \phi(s) \,\dmKN(s).
	\end{split}
	\end{equation}
Thus, since $w$ is a weak solution to \cref{eq:poiss_model}, for any $\phi\in \C^\infty_c([0,r_1))$
	\begin{equation}\label{eq:weak_der}
	\int_I \bkts*{ g(x) - w'(x)} \hKN(x) \phi'(x) \, \de \leb^1(x) = 0.
	\end{equation}
By a classical result (see for example \cite[Lemma 8.1]{Brezis2011}), there exists a constant $C\in \R$ such that $w'(x)\hKN(x)=g(x)\hKN(x)+C$ for $\mKN$-a.e.~$x\in I$. This however implies that for any $\phi\in \C^\infty_c([0,r_1))$
	\begin{equation*}
	0= C\int_0^{r_1} \phi'(x) \, \de \leb^1(x)=C \phi(0),
	\end{equation*}
hence $C=0$.  \\Now $w$ is a $\W (I,\deu,\mKN)$ function, thus in particular it belongs to $W^{1,2}((\epsilon,r_1),\deu,\leb^1)$ for any $\epsilon>0$; moreover, $w$ satisfies $w'=g$ a.e.~and $w(r_1)=0$. Thus, by well known results about Sobolev functions on intervals (see \cite[Theorem 8.2]{Brezis2011}, $w$ coincides with the function in \cref{eq:repr_form_1} for $\mKN$-a.e.~$\rho \in I$.

\proofstep{Step 2:} Let now $w$ be defined as in \cref{eq:repr_form_1}. Since the integrand is continuous on $(0,r_1]$, $w$ is a $\C^1$ function on $(0,r_1]$ (with $w(r_1)=0$). By straightforward computations, we show that $w$ and $w'$ are $L^2(I, \mKN)$ functions. Indeed, by \Holder{} inequality we have that
	\begin{equation}\label{eq:sol_holder}
	\int_0^r \abs{f(s)}\, \dmKN(s)\leq \norm{f}_{L^2(I,\mKN)} \HKN(r)^{\frac{1}{2}};
	\end{equation}
 thus, by \cref{lem:asympt}, 
	\begin{equation*}
	\begin{split}
	\abs{w(\rho)}&\leq \norm{f}_{L^2}\int_\rho^{r_1}\frac{\HKN(r)^{\frac{1}{2}}}{\hKN(r)} \,\de r 	\leq C_1 \norm{f}_{L^2} \int_\rho^{r_1} \frac{r^{\frac{N}{2}}}{r^{N-1}}\,\de r=C_2 \norm{f}_{L^2} \pths*{r_1^{2-\frac{N}{2}}-\rho^{2-\frac{N}{2}}},
	\end{split}	
	\end{equation*}
where $C_1$ and $C_2$ are constants depending only on { $r_{1}\in \left(0, \pi \sqrt{\frac{N-1}{K}} \right)$}, $K>0$ and $N\in (1,\infty)$.
Consequently,
	\begin{equation*}
	\int_0^{r_1} \abs{w}^2\dmKN \leq C_3^2\norm{f}_{L^2}^2 \int_0^{r_1} \pths*{r_1^{4-N}+\rho^{4-N}}\hKN(\rho) \,\de \rho,
	\end{equation*}
which is finite, again by \cref{lem:asympt}.
Moreover, exploiting again \cref{eq:sol_holder}, 
	\begin{equation*}
	\int_0^{r_1} \abs{w'}^2\dmKN \leq  \norm{f}_{L^2}^2 \int_0^{r_1}\frac{\HKN(r)}{\hKN(r)} \,\de r \leq C_4  \norm{f}_{L^2}^2 \int_0^{r_1} \frac{r^N}{r^{N-1}} \,\de r,
	\end{equation*}
which is finite.

By the fact that $\C^{\infty}_c((-\epsilon, r_1))$ is dense in $W^{1,2}_0((-\epsilon, r_1),\deu, \leb^1)$ for any fixed $\epsilon>0$, and noticing that convergence in $W^{1,2}((-\epsilon, r_1),\deu, \leb^1)$ is stronger than convergence in $W^{1,2}_0((-\epsilon, r_1),\deu, \mKN)$, 
we can conclude that the boundary condition in \cref{def:poiss_model} is satisfied.

Finally, by tracing back the identity in \cref{eq:ft_weak}, the very same argument shows that $w$ is a weak solution to \cref{eq:poiss_model}.
\end{proof}

\begin{remark}
Notice that the derivation of the solution still works in the case $r_1=\pi \sqrt{\frac{N-1}{K}}$, provided that the following compatibility condition on $f$ holds true:
	\begin{equation*}
	\int_{\JKN} f \dmKN =0.
	\end{equation*}
However, this case will not be treated in this article.
\end{remark}


\section{A Talenti-type comparison theorem for \texorpdfstring{$\RCD(K,N)$}{RCD(K,N)} spaces}
\label{sec:comparison} 
In this Section, we prove a version of Talenti's comparison theorem in the $\RCD$ setting. The proof is along the lines of (and generalises to the  $\RCD$ setting)  the approach  in the Euclidean framework, e.g. \cite[Section 3.1]{Kesavan2006}. Apart from the technical difficulties of working in a non-smooth setting, the key point is to replace the Euclidean isoperimetric inequality by the Levy-Gromov isoperimetric inequality.

Let $(\Xmms,\dist,\meas)$ be a \mms{} verifying the $\RCD(K,N)$ condition for some $K> 0$ and $N\in (1,\infty)$.
Let $\Omega\subset \Xmms$ be an open domain.

\begin{assumption}\label{assumption:eform}
From now on we will assume $\Eform: L^2(\Xmms, \meas) \times L^2(\Xmms,\meas) \to [-\infty,\infty]$ to be a non-negative definite bilinear form satisfying the following properties:
	\begin{enumerate}[(a)]
		\item Strong locality: $\Eform (u, v) = 0$ whenever $u(x) (v(x)+c)=0$ for $\meas$-a.e.~$x\in\Xmms$, for some constant $c\in\R$.
		\item $\alpha$-uniform ellipticity: there exists $\alpha>0$ such that for any $u\in L^2(\Xmms,\meas)$
			\begin{equation}\label{eq:unif_ellipt}
			\Eform(u,u)\geq \alpha\, \Ch(u,u).
			\end{equation}
		\item $\Eform$ is of order 1: there exists $\beta > 0$ such that $ \Eform (u,u)\leq \beta \norm{u}_{\W \Odm}^2$ for every $u\in \W \Odm$.
	\end{enumerate}
\end{assumption}

\begin{definition}[Domain of $\Eop$]\label{def:domain_of_LE}
Let $\Eform$ be a uniformly elliptic bilinear form as in \cref{assumption:eform}. We define the domain of $\Eop$ as the set
	\begin{equation}\label{eq:domain_of_LE}
	D_\Omega(\Eop)\doteq \brcs*{ u \in W^{1,2}\Odm 
		\stset \text{%
		$\exists f \in L^2(\Omega,\meas)$ such that $\Eform(u,v)=\int_\Omega f v \dmeas$ for all $v\in W^{1,2}_0(\Omega)$%
		}}.
	\end{equation}
If $u\in D_\Omega(\Eop)$ and $f$ satisfies the condition in \cref{eq:domain_of_LE}, we write $-\Eop(u)=f$.
\end{definition}

\begin{definition}[Dirichlet problem on $\Odm$]
 Let $\Eform$ be a uniformly elliptic bilinear form as in \cref{assumption:eform}; let $\Omega \subset \Xmms$ be an open domain and let $f\in L^2(\Omega,\meas)$. We say that a function $u\in W^{1,2}\Xdm$ is a weak solution to the Dirichlet problem
	\begin{equation*}
	\begin{cases}
		-\Eop(u)=f 	& \text{in $\Omega$}\\
		u=0						& \text{on $\partial \Omega$}
	\end{cases}
	\end{equation*} 
if $u\in W^{1,2}_0(\Omega)$ and
	\begin{equation}\label{eq:def_weak_sol}
	\Eform(u,v)=\int_{\Omega} f v \dmeas, 	\qquad \text{for any $v\in W^{1,2}_0 (\Omega)$.}
	\end{equation}
\end{definition}

\begin{remark}
An alternative (but slightly less general) approach would be to adopt the language of differential calculus on metric measure spaces, as introduced for example in \cite{Gigli2018}.
In particular, let $A$ be an element of the $L^2(\Xmms)$-normed $L^\infty(\Xmms)$-module $L^2(T^\ast \Xmms)\otimes L^2(T^\ast \Xmms)$ and assume it is concentrated on $\Omega$. Assume there exists $\alpha>0$ such that for any $X \in \restr{L^2(T\Xmms)}{\Omega}$
		\begin{equation*}
			A(X,X)\doteq A(X\otimes X) \geq \alpha \abs{X}^2,
		\end{equation*}
		where we have denoted by $\abs{\cdot}$ the pointwise norm of $X$, and by $\restr{L^2(T\Xmms)}{\Omega}$ the sub-module of the tangent module whose elements are concentrated on $\Omega$. Recall now that for an infinitesimally Hilbertian \mms{} $\Xmms$ and a function $u\in W^{1,2}\Xdm$ we can define the gradient $\nabla u \in L^2(T\Xmms)$ as the image of the differential $\de u\in L^2(T^\ast \Xmms)$ through the canonical isomorphism between the two $L^\infty$-modules.
If we denote by $\Eform_A: W^{1,2}_0(\Omega)\times W^{1,2}_0(\Omega)\to \R$ the bilinear form defined by 
	\begin{equation*}
	\Eform_A(u,v)\doteq \int_{\Omega} A(\nabla u, \nabla v) \dmeas,
	\end{equation*}
then for any $f\in L^2(\Omega, \meas)$, we say that $u$ is a weak solution to the equation $-\mcL_{\Eform_A}(u)=f$ if 
	\begin{equation*}
	\Eform_A(u,v)=\int_{\Omega} A(\nabla u, \nabla v) \dmeas=\int_\Omega f v\dmeas, \quad \forall v\in W^{1,2}_0(\Omega).
	\end{equation*}
\end{remark}

 
Before passing to the proof of the main comparison theorem, we establish few auxiliary results. We begin with a simple \namecref{lemma:KesLem2_2_1} which only requires $\pths{\Xmms,\meas}$ to be a measure space and $\Omega \subset \Xmms$ to be measurable {with finite measure}.
\begin{lemma}\label{lemma:KesLem2_2_1}
Let $f,u\in L^2(\Omega, \meas)$, with $\Omega \subset \Xmms$ measurable domain {with finite measure}. Define
	\begin{equation*}
	F(t)\doteq \int_{\brcs{u>t}} (u-t) \, f\,  \dmeas , \quad \forall t\in\R.
	\end{equation*}
Then $F$ is differentiable out of a countable set $C\subset\R$, and
	\begin{equation*}
	F'(t)=-\int_{\brcs{u>t}} f \dmeas, \quad \forall t\in\R\setminus C.
	\end{equation*}
\end{lemma}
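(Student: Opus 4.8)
The plan is to express $F$ as an integral of the monotone function $t\mapsto \int_{\{u>t\}} f\dmeas$ and then differentiate under the integral sign, with the set of bad points coming from the (at most countable) set of discontinuities of a monotone map. First I would observe, via the layer-cake identity $(u(x)-t)^+ = \int_t^\infty \chi_{\{u>s\}}(x)\,\de s$ valid for every $x$ with $u(x)>t$, together with Fubini-Tonelli (which applies since $u,f\in L^2(\Omega,\meas)$ and $\meas(\Omega)<\infty$, so $uf\in L^1$), that
	\begin{equation*}
	F(t)=\int_{\{u>t\}}(u-t)f\dmeas=\int_{\Omega}\pths*{\int_t^{\infty}\chi_{\{u>s\}}(x)\,\de s}f(x)\dmeas(x)=\int_t^{\infty}\pths*{\int_{\{u>s\}}f\dmeas}\de s.
	\end{equation*}
Set $G(s)\doteq \int_{\{u>s\}}f\dmeas$, so that $F(t)=\int_t^\infty G(s)\,\de s$. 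Note $G$ is well-defined and finite for every $t$, and $|G(s)|\le \int_\Omega|f|\dmeas<\infty$, so the outer integral converges.

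Next I would argue that $G$ is continuous at all but countably many points. The key decomposition is $G = G_+ - G_-$ with $G_\pm(s)=\int_{\{u>s\}}f_\pm\dmeas$ where $f_\pm$ are the positive and negative parts of $f$; since $s\mapsto \{u>s\}$ is nonincreasing in $s$, both $G_+$ and $G_-$ are nonincreasing, hence each has at most countably many discontinuity points. Let $C$ be the union of these two countable sets; then $G$ is continuous on $\R\setminus C$. By the fundamental theorem of calculus for the integral of a function continuous at $t$, $F$ is differentiable at every $t\in\R\setminus C$ with $F'(t)=-G(t)=-\int_{\{u>t\}}f\dmeas$, which is the claim.

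The only delicate point — and the one I expect to require the most care — is justifying the interchange of integration in the first display and the finiteness bookkeeping: one must check that $(x,s)\mapsto \chi_{\{u>s\}}(x)|f(x)|$ is jointly measurable and that its double integral is finite, so that Fubini-Tonelli (not merely Tonelli) legitimately applies and the identity $F(t)=\int_t^\infty G(s)\,\de s$ holds with an absolutely convergent right-hand side. Joint measurability follows because $\{(x,s): u(x)>s\}$ is a measurable subset of $\Omega\times\R$, and finiteness on any vertical strip $s\ge t$ follows from $\int_t^\infty\!\int_\Omega \chi_{\{u>s\}}|f|\dmeas\,\de s = \int_\Omega (u(x)-t)^+|f(x)|\dmeas(x)\le \norm{u}_{L^2}\norm{f}_{L^2}+|t|\,\meas(\Omega)^{1/2}\norm{f}_{L^2}<\infty$ by Cauchy-Schwarz. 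Everything else is the standard splitting of $f$ into its positive and negative parts and the elementary fact that a monotone function has at most countably many jumps.
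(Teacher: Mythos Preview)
Your argument is correct and complete: the Fubini justification is sound, the decomposition $G=G_+-G_-$ into two nonincreasing functions is valid, and the fundamental theorem of calculus at continuity points of the integrand gives exactly the claimed derivative.

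Your route is genuinely different from the paper's. The paper works directly with the difference quotient: it expands $F(t+h)-F(t)$ by splitting $\{u>t\}$ into $\{u>t+h\}$ and $\{t<u\le t+h\}$, obtaining
\[
\frac{F(t+h)-F(t)}{h}+\int_{\{u>t+h\}}f\dmeas = -\frac{1}{h}\int_{\{t<u\le t+h\}}(u-t)f\dmeas,
\]
and then controls the right-hand side by $\int_{\{t<u\le t+h\}}|f|\dmeas$, which tends to $0$ by H\"older and continuity of measure provided $\meas(\{u=t\})=0$; the exceptional set is taken to be $C=\{t:\meas(\{u=t\})>0\}$. Your approach instead recognizes $F$ as the tail integral $\int_t^\infty G(s)\,\de s$ and reduces differentiability to continuity of $G$, which you obtain from monotonicity after splitting $f$. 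This is a bit more conceptual and explains \emph{why} $F$ is well-behaved (it is an antiderivative of a function of bounded variation), at the cost of a careful Fubini step; the paper's approach is more hands-on but avoids that step and makes the exceptional set completely explicit. Note also that your exceptional set is contained in the paper's: a jump of $G_\pm$ at $t$ equals $\int_{\{u=t\}}f_\pm\dmeas$, which vanishes whenever $\meas(\{u=t\})=0$.
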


\begin{proof}
The proof is quite standard, however we recall it for the reader's convenience.
\\ First of all notice that  $\meas(\brcs{u=t})>0$ for an at most countable set $C\in \R$.  
Let $t\in\R\setminus C$ and $h>0$. Then
	\begin{equation*}
	\begin{split}
	F(t+h)-F(t)	&= 	
					\begin{multlined}[t][]
						\int_{\brcs{u>t+h}} \!(u-t)\, f\,	\dmeas   -h	\int_{\brcs{u>t+h}} \!f	\,		\dmeas  \\
					-\bkts*{\int_{\brcs{u>t+h}} 	\!(u-t)\, f	\,\dmeas+
							\int_{\brcs{t<u\leq t+h}} 	\!(u-t)\, f	\,\dmeas}
					\end{multlined}
					\\
				&=-h\int_{\brcs{u>t+h}} \!f	\dmeas-\int_{\brcs{t<u\leq t+h}} \! (u-t)\, f\,	\dmeas,
	\end{split}
	\end{equation*}
which implies
	\begin{equation*}
	\abs*{\frac{F(t+h)-F(t)}{h}+\int_{\brcs{u>t+h}} \!f	\dmeas}\leq \int_{\brcs{t<u\leq t+h}} \!\abs{f}\dmeas.
	\end{equation*}
	The right hand side converges to $0$ by \Holder{} inequality and continuity of the measure, {recalling that $\meas(\brcs{u=t})=0$}. 
	An analogous procedure works for $F(t-h)-F(t)$: we find
	\begin{equation*}
	\abs*{\frac{F(t-h)-F(t)}{-h}+\int_{\brcs{u>t-h}} \!f	\dmeas}\leq \int_{\brcs{t-h<u\leq t}} \!\abs{f}\dmeas;
	\end{equation*}	
	taking the limit as $h\to 0$, this gives the claimed identity.
	\end{proof}

\begin{lemma}\label{lemma:KesThm3_1_1-Step2RCD}
Let $\Omega\subset \Xmms$ be an open domain with finite measure, $\Eform$ be as in \cref{assumption:eform} and $f\in L^2(\Omega,\meas)$. Let $u\in W^{1,2}_0(\Omega)$ be a weak solution to $-\Eop(u)=f$. Then for $\leb^1$-a.e. $t>0$ it holds:
	\begin{equation}\label{equation:KesThm3_1_1-Step2RCD}
	\pths*{-\frac{d}{dt}\int_{\brcs{\abs{u}>t}} \mwug{u} \dmeas}^2 \leq -\frac{1}{\alpha}\mu'(t)\int_{\brcs{\abs{u}>t}} \abs{f} \dmeas,
	\end{equation}
where $\mu=\mu_u$ is the distribution function of $u$ and $\mwug{u}$ denotes the minimal $2$-weak upper gradient of $u$.
\end{lemma}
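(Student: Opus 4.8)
The plan is to test the weak formulation \eqref{eq:def_weak_sol} of $-\Eop(u)=f$ against truncations of $u$ at the level $t$ and then differentiate in $t$, mimicking Talenti's classical argument. Fix $t>0$ and $h>0$ and set $w=w_{t,h}\doteq \min\brcs{(u-t)^+,h}$; being a $1$-Lipschitz function of $u$ that vanishes at $0$, $w$ belongs to $\W_0(\Omega)$ and hence is an admissible test function. The core of the argument is the identity $\Eform(u,w)=\Eform(w,w)$: to obtain it I would decompose $u=p+w+q$ with $p\doteq \min\brcs{u,t}$ and $q\doteq (u-(t+h))^+$ (which also lie in $\W_0(\Omega)$), observe that $w\cdot(p-t)=0$ and $q\cdot(w-h)=0$ hold $\meas$-a.e., and invoke strong locality (\cref{assumption:eform}(a), together with the symmetry of $\Eform$) to get $\Eform(p,w)=\Eform(q,w)=0$; bilinearity then yields $\Eform(u,w)=\Eform(w,w)$. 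By $\alpha$-uniform ellipticity and the chain rule for the minimal weak upper gradient --- which gives $\mwug{w}=\mwug{u}$ $\meas$-a.e. on $\brcs{t<u<t+h}$ and $\mwug{w}=0$ $\meas$-a.e. outside --- this gives
\begin{equation*}
\Eform(u,w)=\Eform(w,w)\geq \alpha\,\Ch(w,w)=\alpha\int_{\brcs{t<u<t+h}}\mwug{u}^2\dmeas .
\end{equation*}
On the other hand, since $0\leq w\leq h$ and $w$ vanishes outside $\brcs{u>t}$, the weak formulation gives $\Eform(u,w)=\int_\Omega f\,w\dmeas\leq h\int_{\brcs{u>t}}\abs{f}\dmeas$ (a crude bound that is already enough here; alternatively one could differentiate via \cref{lemma:KesLem2_2_1}).

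Applying the same reasoning to $-u$, which solves $-\Eop(-u)=-f$, with the test function $\min\brcs{(-u-t)^+,h}$, and summing the two resulting inequalities, I would obtain that for every $t>0$ and $h>0$
\begin{equation*}
\alpha\int_{\brcs{t<\abs{u}<t+h}}\mwug{u}^2\dmeas\leq h\int_{\brcs{\abs{u}>t}}\abs{f}\dmeas .
\end{equation*}
Combining this with the Cauchy--Schwarz inequality on $\brcs{t<\abs{u}<t+h}$, whose measure is at most $\mu(t)-\mu(t+h)$, and with the fact that $\mwug{u}=0$ $\meas$-a.e. on the level set $\brcs{\abs{u}=t+h}$ (so that $\int_{\brcs{t<\abs{u}<t+h}}\mwug{u}\dmeas=\int_{\brcs{\abs{u}>t}}\mwug{u}\dmeas-\int_{\brcs{\abs{u}>t+h}}\mwug{u}\dmeas$), I get
\begin{equation*}
\pths*{\frac{1}{h}\bkts*{\int_{\brcs{\abs{u}>t}}\mwug{u}\dmeas-\int_{\brcs{\abs{u}>t+h}}\mwug{u}\dmeas}}^{2}\leq \frac{\mu(t)-\mu(t+h)}{h}\cdot\frac{1}{\alpha}\int_{\brcs{\abs{u}>t}}\abs{f}\dmeas .
\end{equation*}
As $t\mapsto\int_{\brcs{\abs{u}>t}}\mwug{u}\dmeas$ and $\mu$ are both non-increasing, hence differentiable at $\leb^1$-a.e.\ $t$, letting $h\to 0^+$ at such a $t$ yields exactly \eqref{equation:KesThm3_1_1-Step2RCD}.

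I expect the first step --- the identity $\Eform(u,w)=\Eform(w,w)$ --- to be the main obstacle: one has to check that $p,q,w\in\W_0(\Omega)$, that the bilinear form genuinely splits along the decomposition $u=p+w+q$ (finiteness coming from \cref{assumption:eform}(b)--(c) and Cauchy--Schwarz for non-negative forms), that \cref{assumption:eform}(a) applies with the constants $c=-t$ and $c=-h$, and that the chain-rule identity for $\mwug{w}$ is available --- a standard but non-trivial fact in the metric measure framework. By contrast, the $\pm u$ reduction, the Cauchy--Schwarz step, and the passage to the limit $h\to 0^+$ are routine, the only care needed there being the elementary fact that $\mwug{u}=0$ $\meas$-a.e. on each level set $\brcs{\abs{u}=s}$, which is what allows the difference quotient of $\int_{\brcs{\abs{u}>t}}\mwug{u}\dmeas$ to converge to its derivative.
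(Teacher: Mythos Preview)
Your argument is correct and is essentially the paper's own proof: the capped truncation $w_{t,h}=\min\{(u-t)^+,h\}$ together with its analogue for $-u$ coincides with $v_t-v_{t+h}$ in the paper's notation, and the subsequent use of strong locality, $\alpha$-ellipticity, Cauchy--Schwarz and the passage $h\to 0^+$ mirror the paper step by step. The only minor differences are cosmetic --- you bound $\int f\,w_{t,h}\dmeas$ directly instead of differentiating via \cref{lemma:KesLem2_2_1}, and you invoke symmetry of $\Eform$ to obtain $\Eform(p,w)=0$, whereas the paper circumvents symmetry by the indicator-splitting trick in \eqref{eq:KesThm3_1_1-3} (if you want to match that level of generality, decompose $p=u\chi_{\{u\le t\}}+t\chi_{\{u>t\}}$ and argue as there).
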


\begin{proof}
Let $t>0$ be fixed, and consider the following test function:
\begin{equation}\label{eq:test_func}
v_t\doteq (u-t)^+ - (u+t)^- = 
	\begin{cases}
	u-t=\abs{u}-t		&\text{if $u>t$}\\
	0					&\text{if $\abs{u}\leq t$}\\
	u+t=-(\abs{u}-t)	&\text{if $u<-t$}\\
	\end{cases}.
\end{equation}
It is easy to see that $v_t$ still belongs to the space $W^{1,2}_0(\Omega)$, thus it can be used as a test function in \cref{eq:def_weak_sol} to obtain
	\begin{equation*}
	\Eform (u,v_t) = \int_\Omega f v_t \dmeas = \int_{\brcs{u>t}} (u-t) \,f \,  \dmeas  -  \int_{\brcs{{ - u> t}}}  (-u-t) \, f\, \dmeas.
	\end{equation*}
By applying \cref{lemma:KesLem2_2_1} we obtain that, for $\leb^1$-a.e.~$t>0$, $t\mapsto \Eform (u,v_t)$ is differentiable with
	\begin{equation}\label{equation:Uprime}
	-\frac{d}{dt}\Eform (u,v_t)=\int_{\brcs{u>t}} f \dmeas  -  \int_{\brcs{{ u< -t}}} f \dmeas  \leq \int_{\brcs{\abs{u}>t}}  \abs{f} \dmeas.
	\end{equation}
For fixed  $t>0$ and $h>0$,  by bilinearity of $\Eform$ it holds that
	\begin{equation}\label{eq:KesThm3_1_1-1}
	\Eform(u,v_{t+h})-\Eform(u,v_{t})=\Eform(u,v_{t+h}-v_t).
	\end{equation}
Moreover, we can explicitly write
	\begin{equation}\label{eq:KesThm3_1_1-2}
	v_{t+h}-v_t= - \operatorname{sgn}(u) \bkts*{(\abs{u}-t) \chi_{\brcs{t< \abs{u} \leq t+h}} + h\chi_{\brcs{\abs{u}>t+h}}}=
	\begin{cases}
		h 				& \text{if $u<-t-h$}\\
		-(u+t) 			& \text{if $-t-h\leq u < -t$}\\
		0				& \text{if $\abs{u}\leq t$}\\
		-(u-t) 			& \text{if $t<u \leq t+h$}\\
		-h 				& \text{if $u>t+h$}
	\end{cases}.
	\end{equation}
Notice that, by strong locality and bilinearity of $\Eform$, for any $B\in \borel(\Xmms)$
	\begin{equation}\label{eq:KesThm3_1_1-3}
	0=\Eform(u \chi_B, \chi_B) + \Eform(u \chi_{\Xmms\setminus B} , \chi_B)= \Eform (u, \chi_B).
	\end{equation}
In particular, it follows from \cref{eq:KesThm3_1_1-1,eq:KesThm3_1_1-2,eq:KesThm3_1_1-3} that 
	\begin{equation*}
	\begin{split}
	\frac{\Eform(u,v_{t+h})-\Eform(u,v_{t})}{h}&=
	-\frac{1}{h}\bkts*{	\Eform \pths*{u, (u+t) \chi_{\brcs{-t-h\leq u < -t}}}+
						\Eform \pths*{u, (u-t) \chi_{\brcs{t<u \leq t+h}}}}\\
	&=-\frac{1}{h}\bkts*{	\Eform \pths*{u \chi_{\brcs{-t-h\leq u < -t}},u \chi_{\brcs{-t-h\leq u < -t}}}+
							\Eform \pths*{u \chi_{\brcs{t<u \leq t+h}},u  { \chi_{\brcs{t<u \leq t+h}} } } }	.			
	\end{split}
	\end{equation*}
By $\alpha$-uniform ellipticity, then, the following estimate holds true:
	\begin{equation}\label{eq:KesThm3_1_1-4}
	-\frac{1}{\alpha}\frac{\Eform(u,v_{t+h})-\Eform(u,v_{t})}{h} \geq \frac{1}{h} \int_{\brcs{t< \abs{u}\leq t+h}} \mwug{u}^2 \dmeas.
	\end{equation}
Consequently, the following chain of inequalities holds for all $t\in\R$ and $h>0$:
	\begin{equation}\label{eq:preGradBound}
		\begin{split}
		\bigg(\frac{1}{h} \int_{\brcs{t< \abs{u} \leq t+h}} \mwug{u} \dmeas \bigg)^2 & \leq  	\pths*{\frac{1}{h} \int_{\brcs{t< \abs{u} \leq t+h}} \mwug{u}^2 \dmeas}
				\pths*{\frac{\meas\pths*{{\brcs{t< \abs{u} \leq t+h}}}}{h}} \\
		&\leq	\frac{1}{\alpha}\pths*{-\frac{\Eform(u,v_{t+h})-\Eform(u,v_{t})}{h}}
				\pths*{-\frac{\mu(t+h)-\mu(t)}{h}} 	.
		\end{split}
	\end{equation}	
Hence, if $t$ is a differentiability point for $t\mapsto \Eform(u, v_t)$, letting $h\to 0$ and using \cref{equation:Uprime} we get exactly the desired result.
\end{proof}

We now state a suitable version of the coarea formula which can be found in \cite[Remark 4.3]{Miranda2003} for a general version in metric measure spaces, and in \cite[Theorem 2.12]{Mondino2019} for a contextualization in $\RCD$ spaces (the identity in the form we state follows from the latter by a monotone convergence argument).

\begin{proposition}[Coarea formula]
\label{prop:coarea_formula}
Let $\Xdm$ be an $\RCD(K,N)$ space for some $K\in\R$, $N\in (1,\infty)$. Let $\Omega \subset \Xmms$ be an open domain and $u:\Omega \to \R$ be a non-negative function in $W^{1,2}_0(\Omega)$. Then for any $t > 0$
	\begin{equation}\label{equation:coarea_formula}
	\int_{\brcs{u>t}} \mwug{u} \dmeas =\int_t^{\infty} \Per(\brcs{u>r})\, \de r.
	\end{equation}
More generally,  for any Borel function $f:\Omega \to \R$ and for any  $t>0$, it holds that
	\begin{equation*}
	\int_{\brcs{u>t}} f \mwug{u} \dmeas = \int_t^\infty \pths*{ \int f \,\de \Per(\brcs{u>r})} \de r.
	\end{equation*}
\end{proposition}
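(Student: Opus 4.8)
The plan is to deduce both identities from the coarea formula for Sobolev/BV functions on metric measure spaces, in the form recalled in \cite[Remark 4.3]{Miranda2003} and \cite[Theorem 2.12]{Mondino2019}: if $v\in W^{1,1}\Xdm$, then $\Per(\brcs{v>r})<\infty$ for $\leb^1$-a.e.\ $r\in\R$ and, for every bounded Borel function $g:\Xmms\to\R$,
\begin{equation*}
\int_\Xmms g\,\mwug{v}\dmeas=\int_{\R}\pths*{\int g\,\de\Per(\brcs{v>r})}\de r .
\end{equation*}
(If the reference gives this only for bounded continuous $g$, a routine monotone-class argument upgrades it to all bounded Borel $g$, both sides being monotone along increasing sequences of non-negative test functions.) Everything then reduces to localising this identity at the level $t$ by a truncation of $u$.

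So, fixing $t>0$, I would set $v\doteq(u-t)^+$. Since $u\ge 0$ and $u\in W^{1,2}\Xdm\subset L^2(\Xmms,\meas)$, Chebyshev's inequality gives $\meas(\brcs{u>t})\le t^{-2}\norm{u}_{L^2}^2<\infty$; as $v\in L^2$ vanishes outside $\brcs{u>t}$, the Cauchy--Schwarz inequality yields $v\in L^1(\Xmms,\meas)$. By the chain rule and locality of the minimal weak upper gradient one has $\mwug{v}=\mwug{u}\,\chi_{\brcs{u>t}}$ $\meas$-a.e., which is therefore in $L^1$ as well; hence $v\in W^{1,1}\Xdm$ and the coarea formula above applies to $v$. (This is precisely the point where the hypothesis $t>0$ is used: for $t=0$ one would instead need $\Omega$ to have finite measure.) Since $v\ge 0$ we have $\Per(\brcs{v>r})=\Per(\Xmms)=0$ for $r<0$, while $\brcs{v>r}=\brcs{u>t+r}$ for $r>0$; so, extending $f$ by $0$ to $\Xmms$ and choosing $g=f$ (bounded Borel, for the moment), the change of variables $s=t+r$ gives
\begin{equation*}
\int_{\brcs{u>t}}f\,\mwug{u}\dmeas=\int_\Xmms f\,\mwug{v}\dmeas=\int_0^{\infty}\pths*{\int f\,\de\Per(\brcs{u>t+r})}\de r=\int_t^{\infty}\pths*{\int f\,\de\Per(\brcs{u>s})}\de s .
\end{equation*}
Taking $f\equiv 1$ is exactly \eqref{equation:coarea_formula}, and it also shows $\int_t^{\infty}\Per(\brcs{u>s})\de s=\int_{\brcs{u>t}}\mwug{u}\dmeas<\infty$, so that $\brcs{u>s}$ has finite perimeter for $\leb^1$-a.e.\ $s>t$ and the right-hand side above is well defined.

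Finally, I would remove the boundedness assumption on $f$ by a monotone convergence argument (this is the ``monotone convergence'' alluded to in the statement): writing $f=f^+-f^-$ and approximating $f^\pm$ from below by $f^\pm\wedge k$, the left-hand side passes to the limit by monotone (or dominated, when $f\,\mwug{u}\in L^1(\brcs{u>t},\meas)$) convergence, while on the right-hand side each $\Per(\brcs{u>s})$ is a non-negative measure, so $\int(f^\pm\wedge k)\,\de\Per(\brcs{u>s})\uparrow\int f^\pm\,\de\Per(\brcs{u>s})$ and a further monotone convergence in the $s$-integral is legitimate; subtracting the two contributions gives the claim for general $f$ (in $[-\infty,+\infty]$, with both sides finite under the integrability hypothesis, or in $[0,+\infty]$ when $f\ge 0$). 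I expect the only genuinely delicate points to be the identification $\mwug{(u-t)^+}=\mwug{u}\,\chi_{\brcs{u>t}}$ and the verification that $(u-t)^+$ falls within the scope of the cited $\mathrm{BV}$ coarea formula; the rest is the layer-cake change of variables plus standard limiting arguments.
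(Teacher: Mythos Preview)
Your proposal is correct and follows essentially the same route as the paper, which does not give a detailed argument but simply cites the BV coarea formula of \cite{Miranda2003} (together with \cite[Theorem 2.12]{Mondino2019} and the properness of $\CD(K,N)$ spaces) and remarks that the stated identity follows by a monotone convergence argument. Your truncation $v=(u-t)^{+}$, the identification $\mwug{v}=\mwug{u}\,\chi_{\brcs{u>t}}$, and the monotone-class passage to general Borel $f$ are precisely the details the paper leaves implicit.
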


Notice that the most general theorem works for functions of bounded variation (see again \cite{Miranda2003}). \cref{prop:coarea_formula} follows from such a ${\rm BV}$ version combined with \cite[Remark 3.5]{Gigli2016}, and by the fact that the $\CD(K,N)$ condition with $N\in(1,\infty)$ implies properness of the   space (implies local doubling, thus properness \cite{Sturm2006b}).

Next, we recall the Lévy-Gromov isoperimetric inequality in $\RCD$ spaces, as obtained by Cavalletti and Mondino in \cite{Cavalletti2017a} (for the Minkowski content) and in \cite{Cavalletti2018a} (for the perimeter).

\begin{proposition}[Lévy-Gromov inequality]
\label{prop:levy_gromov}
Let $\Xdm$ be an $\RCD(K,N)$ \mms{} with $K>0$ and $N\in (1,\infty)$. Then for any $E\in\msB(\Xmms)$
	\begin{equation}\label{equation:levy_gromov}
	\Per(E)\geq \isoKN(\meas(E)).
	\end{equation}
In particular, the isoperimetric profile of $\Xdm$ is bounded from below by $\isoKN$.
\end{proposition}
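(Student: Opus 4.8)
The statement is imported from the literature; the plan of proof is the \emph{localization} (or \emph{needle decomposition}) technique, which reduces the isoperimetric problem on $\Xmms$ to a one-parameter family of one-dimensional problems governed by the model $\modelspace$. Since \eqref{equation:levy_gromov} is trivial when $\Per(E)=+\infty$, and since $\hKN$ vanishes at both endpoints of $\JKN$ so that $\isoKN(0)=\isoKN(1)=0$, one may assume that $E$ has finite perimeter and that $v\doteq\meas(E)\in(0,1)$.

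First I would set $f\doteq\chi_E-v\in L^\infty(\Xmms,\meas)$, which has vanishing mean since $\int_\Xmms f\dmeas=\meas(E)-v=0$, and apply the Localization Theorem for essentially non-branching $\CD(K,N)$ spaces — recalling that $\RCD(K,N)$ spaces are essentially non-branching. This produces a Kantorovich potential for the $L^1$-optimal transport problem between $f^+\meas$ and $f^-\meas$, whose transport rays partition a set of full $\meas$-measure into geodesics $\{X_q\}_{q\in Q}$, together with a disintegration $\meas=\int_Q\meas_q\,\de\mathfrak{q}(q)$ such that, for $\mathfrak{q}$-a.e.~$q$: (i) $\meas_q$ is a probability measure supported on the geodesic $\overline{X_q}$, absolutely continuous with respect to $\leb^1$, and $(X_q,\dist,\meas_q)$ is — after the natural identification with an interval — a one-dimensional $\CD(K,N)$ space, i.e.~$\meas_q=h_q\,\leb^1$ with $h_q^{1/(N-1)}$ obeying the sharp concavity inequality solved with equality by $\hKN^{1/(N-1)}$; and (ii) $\int_{X_q}f\dmeas_q=0$, i.e.~$\meas_q(E)=v$. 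Since $\chi_E$ is $\{0,1\}$-valued with $v\in(0,1)$, the complement of the transport set is $\meas$-negligible, so the disintegration is global.

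Next I would invoke a \emph{perimeter disintegration inequality},
	\begin{equation*}
	\Per(E;\Xmms)\ \geq\ \int_Q\Per_{(X_q,\dist,\meas_q)}(E\cap X_q)\,\de\mathfrak{q}(q),
	\end{equation*}
comparing the ambient perimeter of $E$ with the one-dimensional perimeters of its traces along the rays. On each ray, $(X_q,\dist,\meas_q)$ is a one-dimensional $\CD(K,N)$ probability space with $\meas_q(E)=v$, so by the one-dimensional comparison — any probability $\CD(K,N)$ density on an interval has isoperimetric profile $\geq\isoKN$, since enlarging the interval only lowers the profile and $\pi\sqrt{(N-1)/K}$ is the maximal Bonnet--Myers diameter — one obtains $\Per_{(X_q,\dist,\meas_q)}(E\cap X_q)\geq\isoKN(\meas_q(E))=\isoKN(v)$ for $\mathfrak{q}$-a.e.~$q$, via \cref{lemma:isoprof_model}. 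Integrating over $Q$ and using $\mathfrak{q}(Q)=1$ then gives $\Per(E)\geq\isoKN(v)$, which is \eqref{equation:levy_gromov}.

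The hard part — and the main obstacle — is the Localization Theorem itself: constructing the disintegration along transport rays of a mean-zero function, and, crucially, proving that the $\CD(K,N)$ lower bound passes to $\meas$-a.e.~conditional measure $\meas_q$. This is the deep content of \cite{Cavalletti2017a} (in the Minkowski-content formulation) and \cite{Cavalletti2018a} (in the perimeter formulation), resting on the one-dimensional localization paradigm and on the structure of $L^1$-optimal transport in non-branching spaces. A secondary technical point is the perimeter disintegration inequality: in the Minkowski-content approach it follows from a Fatou argument on $\epsilon$-enlargements (using $E^\epsilon\cap X_q\supseteq(E\cap X_q)^\epsilon$ computed inside $X_q$), whereas the sharper statement for finite-perimeter sets requires a more careful coarea-type slicing of the perimeter measure along the rays.
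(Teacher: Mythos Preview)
Your proposal is correct and matches the paper's treatment: the paper does not prove \cref{prop:levy_gromov} at all but simply imports it from \cite{Cavalletti2017a,Cavalletti2018a}, and your sketch accurately summarizes the localization argument of those references. Your outline of the needle decomposition, the mean-zero balancing function $\chi_E-v$, the disintegration with $\CD(K,N)$ conditionals, the perimeter superadditivity along rays, and the one-dimensional comparison via \cref{lemma:isoprof_model} is faithful to the Cavalletti--Mondino proof.
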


By differentiating the coarea formula \eqref{equation:coarea_formula} and exploiting the Lévy-Gromov inequality \eqref{equation:levy_gromov} we get:  

\begin{corollary}
\label{corollary:KesCor2_2_3}
Let $\Xdm$ be an $\RCD(K,N)$ space for some $K>0$, $N\in (1,\infty)$. Let $\Omega \subset \Xmms$ be an open domain and $u:\Omega \to \R$ be a function in $W^{1,2}_0(\Omega)$. Then the map 
	\begin{equation*}
	t \mapsto \int_{\brcs{\abs{u}>t}} \mwug{u} \dmeas
	\end{equation*}
is absolutely continuous and
	\begin{equation*}
	-\frac{d}{dt}\pths*{\int_{\brcs{\abs{u}>t}} \mwug{u} \dmeas}\geq \isoKN\pths*{\meas\pths*{\brcs*{\abs{u}>t}}}= \isoKN\pths*{\mu(t)}.
	\end{equation*}
\end{corollary}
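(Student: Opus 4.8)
The plan is to combine three facts proved above: the coarea formula (\cref{prop:coarea_formula}), the Lévy--Gromov isoperimetric inequality (\cref{prop:levy_gromov}), and the elementary fact that $u\in W^{1,2}_0(\Omega)$ implies $\abs{u}\in W^{1,2}_0(\Omega)$ with $\mwug{\abs u}=\mwug u$. First I would reduce to the case $u\ge 0$: since $\brcs{\abs{u}>t}=\brcs{\abs u>t}$ for $t>0$ and $\mwug{u}=\mwug{\abs u}$, the map $t\mapsto \int_{\brcs{\abs u>t}}\mwug u\dmeas$ equals the corresponding map for the non-negative function $\abs u$, so it suffices to treat a non-negative $u\in W^{1,2}_0(\Omega)$.

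Next, apply the coarea formula \eqref{equation:coarea_formula} to write, for every $t>0$,
\begin{equation*}
\Phi(t)\doteq \int_{\brcs{u>t}}\mwug u\dmeas=\int_t^{\infty}\Per(\brcs{u>r})\,\de r.
\end{equation*}
Since $u\in L^2$, the function $r\mapsto \Per(\brcs{u>r})$ is integrable on $(0,\infty)$ (indeed $\Phi(0^+)=\int_\Omega\mwug u\dmeas<\infty$), so $\Phi$ is the tail integral of an $L^1_{\mathrm{loc}}$ function on $(0,\infty)$, hence locally absolutely continuous on $(0,\infty)$ with
\begin{equation*}
-\frac{d}{dt}\Phi(t)=\Per(\brcs{u>t})\qquad\text{for }\leb^1\text{-a.e. }t>0.
\end{equation*}
Then invoke the Lévy--Gromov inequality \eqref{equation:levy_gromov} with $E=\brcs{u>t}$, which is a Borel set, to get $\Per(\brcs{u>t})\ge \isoKN(\meas(\brcs{u>t}))=\isoKN(\mu(t))$ for every such $t$. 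Combining the two displays yields the claimed inequality $-\frac{d}{dt}\Phi(t)\ge \isoKN(\mu(t))$ a.e.

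The only point needing a little care is the global absolute continuity of $\Phi$ on $[0,\infty)$ (as opposed to merely on $(0,\infty)$): this follows because $r\mapsto\Per(\brcs{u>r})$ is in $L^1((0,\infty))$, so its tail integral is absolutely continuous up to $t=0$ as well. I do not expect a serious obstacle here; the statement is essentially an assembly of the coarea formula and Lévy--Gromov, the one subtlety being to pass from $u$ to $\abs u$ cleanly, which is handled by the locality properties of the minimal weak upper gradient already recalled in the preliminaries.
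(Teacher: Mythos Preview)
Your proposal is correct and follows exactly the route the paper indicates: the paper does not give a written-out proof but simply states that the corollary follows ``by differentiating the coarea formula \eqref{equation:coarea_formula} and exploiting the Lévy--Gromov inequality \eqref{equation:levy_gromov}'', which is precisely what you do, including the reduction to $u\ge 0$ needed because \cref{prop:coarea_formula} is stated for non-negative functions.
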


We have now the tools needed to prove our first main result.

\begin{theorem}[A Talenti-type comparison for $\RCD(K,N)$ spaces]\label{theorem:talenti}
Let $\Xdm$ be an $\RCD(K,N)$ space for some $K>0$, $N\in (1,\infty)$,  with $\meas(\Xmms)=1$, and let  $\Omega \subset \Xmms$ be an open domain with measure $\meas(\Omega)=v\in (0,1)$. Let $f\in L^2(\Omega,\meas)$.   Let $\Eform$ be a $\alpha$-uniformly elliptic bilinear form as in \cref{assumption:eform} and assume that  $u\in W^{1,2}_{0}(\Omega)$ is a weak solution to the equation $-\Eop (u)=f$. Let also $w\in W^{1,2}(I,\deu, \mKN)$ be a weak solution (as in \cref{def:poiss_model}) to the problem
	\begin{equation}\label{eq:symm_problem}
	\left\lbrace
	\begin{aligned}
	-\alpha \lapKN w &= f^\star \quad \text{in $I$}\\
	w(r_1)&=0  
	\end{aligned}
	\right.,
	\end{equation}
where $I=[0,r_{v})$, $r_{v}>0$ is such that $\mKN([0,r_{v}))=\meas (\Omega)$, and $f^\star$ is the Schwarz symmetrization of $f$. Then
\begin{enumerate}
\item  $u^\star(x)\leq w(x)$, {for every $x\in [0, r_{v}]$}.
\item For any $1\leq q \leq 2$, the following $L^{q}$-gradient estimate holds: 
	\begin{equation}\label{eq:GradComp}
	\int_\Omega \mwug{u}^q \dmeas	\leq \int_0^{r_{v}}\abs{w'(\rho)}^q \dmKN(\rho).
	\end{equation}
\end{enumerate}
\end{theorem}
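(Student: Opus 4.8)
The plan is to follow the classical Talenti scheme, replacing the Euclidean isoperimetric inequality with the Lévy--Gromov inequality (\cref{prop:levy_gromov}) and the Euclidean model ball with the one-dimensional model space $\modelspace$. First I would derive the fundamental differential inequality for $\mu=\mu_u$. Fix $t>0$ and combine \cref{lemma:KesThm3_1_1-Step2RCD} with \cref{corollary:KesCor2_2_3}: the left-hand side of \eqref{equation:KesThm3_1_1-Step2RCD} is, by Cauchy--Schwarz applied through the coarea formula, bounded below by $\isoKN(\mu(t))^2$, so that
	\begin{equation*}
	\isoKN(\mu(t))^2 \leq -\frac{1}{\alpha}\,\mu'(t)\int_{\brcs{\abs{u}>t}} \abs{f}\dmeas
	\end{equation*}
for a.e.\ $t>0$. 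Then I would control the right-hand integral by symmetrization: by \cref{lemma:KesProp1_2_2} (applied to $\abs{f}$ on the superlevel set $\brcs{\abs{u}>t}$, which has measure $\mu(t)$) one gets $\int_{\brcs{\abs{u}>t}}\abs{f}\dmeas \leq \int_0^{\mu(t)} f^\sharp(s)\,\de s$. Recalling that $\isoKN(v)=\hKN(\HKN^{-1}(v))$ from \cref{lemma:isoprof_model}, this yields the key ODE inequality
	\begin{equation}\label{eq:keyODE}
	1 \leq \frac{-\mu'(t)}{\alpha\,\hKN(\HKN^{-1}(\mu(t)))^2}\int_0^{\mu(t)} f^\sharp(s)\,\de s
	\end{equation}
for a.e.\ $t>0$ with $\mu(t)>0$.

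Next I would pass from \eqref{eq:keyODE} to a pointwise bound on $u^\sharp$ by integrating in $t$ after a change of variables. The substitution $s=\mu(t)$ (legitimate away from the at-most-countable bad set, using that $u^\sharp$ is the generalized inverse of $\mu$) turns \eqref{eq:keyODE} into a statement that, for a.e.\ $\tau\in(0,v)$,
	\begin{equation*}
	u^\sharp(\tau) \leq \frac{1}{\alpha}\int_0^{v}\frac{\chi_{(0,\sigma)}(\tau)}{\hKN(\HKN^{-1}(\sigma))^2}\int_0^{\sigma} f^\sharp(s)\,\de s\;\de\sigma \;=\; \frac{1}{\alpha}\int_\tau^{v}\frac{1}{\isoKN(\sigma)^2}\int_0^{\sigma} f^\sharp(s)\,\de s\;\de\sigma,
	\end{equation*}
having used that $u^\sharp(v)=0$ (which holds because $u\in W^{1,2}_0(\Omega)$ forces $\mu(t)\to v$ as $t\to 0^+$, so $\ess\sup$ of the relevant tail is finite and $u^\sharp$ vanishes at the endpoint). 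Comparing with the representation \eqref{eq:repr_form_2} of \cref{prop:sol_mod_space} for the solution $w$ of \eqref{eq:symm_problem} — where the factor $1/\alpha$ matches the $-\alpha\lapKN w=f^\star$ on the left — and recalling $u^\star=u^\sharp\circ\HKN$ and $f^\star=f^\sharp\circ\HKN$ so that $f^\sharp=f^\star\circ\HKN^{-1}=f\circ\HKN^{-1}$ in the integrand, I obtain $u^\sharp(\tau)\leq w(\HKN^{-1}(\tau))$ for a.e.\ $\tau$, hence $u^\star(x)=u^\sharp(\HKN(x))\leq w(x)$; since both sides are monotone (and $w$ continuous, $u^\star$ left-continuous), this upgrades to all $x\in[0,r_v]$. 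This proves part (1).

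For part (2), the $L^q$-gradient estimate, I would again use the coarea formula together with \eqref{eq:keyODE}. By \cref{prop:coarea_formula} and \cref{corollary:KesCor2_2_3}, $-\frac{d}{dt}\int_{\brcs{\abs{u}>t}}\mwug{u}\dmeas = \Per(\brcs{\abs{u}>t})$ for a.e.\ $t$, and this quantity is at least $\isoKN(\mu(t))$; on the other hand, from \cref{lemma:KesThm3_1_1-Step2RCD} and the Cauchy--Schwarz step, $\Per(\brcs{\abs{u}>t})^2 \le \bigl(-\frac{d}{dt}\int_{\brcs{\abs{u}>t}}\mwug{u}\dmeas\bigr)\bigl(-\mu'(t)\bigr)$ is not quite what I want; rather the cleaner route is: writing $P(t)\doteq\int_{\brcs{\abs{u}>t}}\mwug{u}\dmeas$, Hölder on the superlevel annulus gives $\bigl(-P'(t)\bigr)^q \le \bigl(-\mu'(t)\bigr)^{q-1}\cdot\frac{d}{dt}\!\int_{\brcs{\abs u>t}}\mwug u^q\dmeas$ for $1\le q\le 2$ (interpolating; for $q=2$ it is the Cauchy--Schwarz already used), whence
	\begin{equation*}
	\int_\Omega \mwug{u}^q\dmeas = \int_0^\infty\!\Bigl(-\frac{d}{dt}\!\int_{\brcs{\abs u>t}}\mwug u^q\dmeas\Bigr)\de t \;\le\; \int_0^\infty \frac{\bigl(-P'(t)\bigr)^q}{\bigl(-\mu'(t)\bigr)^{q-1}}\,\de t.
	\end{equation*}
Then I bound $-P'(t)\le \frac{1}{\alpha\,\isoKN(\mu(t))}\int_0^{\mu(t)}f^\sharp$ by combining \cref{lemma:KesThm3_1_1-Step2RCD}, \cref{corollary:KesCor2_2_3} and \cref{lemma:KesProp1_2_2} (this is essentially \eqref{eq:keyODE} rearranged, since $-P'(t)\ge\isoKN(\mu(t))$ forces the worst case to be attained there), substitute $\sigma=\mu(t)$, and recognize the resulting integral $\int_0^v \isoKN(\sigma)^{-2q}\bigl(\int_0^\sigma f^\sharp\bigr)^q\,\de\sigma$ as exactly $\int_0^{r_v}\abs{w'(\rho)}^q\dmKN(\rho)$ via the representation of $w'$ obtained by differentiating \eqref{eq:repr_form_1}. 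I expect the main obstacle to be the careful justification of the change of variables $s=\mu(t)$ and the chain-rule/differentiation-of-integral manipulations at the measure-theoretic level — in particular handling the flat parts and jumps of $\mu$, the at-most-countable exceptional sets, and the absolute continuity statements of \cref{corollary:KesCor2_2_3} — as well as checking that the $q=1$ and $q=2$ endpoints of the Hölder interpolation glue correctly with the degenerate weight $\isoKN$ vanishing at the endpoints of $\JKN$; the geometric heart, by contrast, is entirely encapsulated in the Lévy--Gromov inequality.
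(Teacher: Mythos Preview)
Your outline for part (1) is essentially the paper's argument: derive the key differential inequality from \cref{lemma:KesThm3_1_1-Step2RCD}, \cref{corollary:KesCor2_2_3} and \cref{lemma:KesProp1_2_2}, integrate with the change of variables $\xi=\mu(t)$, and identify the resulting expression with the representation formula for $w$. That is fine.

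Part (2), however, contains a genuine error. Your H\"older step on the annulus,
\[
\bigl(-P'(t)\bigr)^{q}\;\le\;(-\mu'(t))^{q-1}\Bigl(-\tfrac{d}{dt}\!\int_{\{|u|>t\}}|\nabla u|^{q}\dmeas\Bigr),
\]
is correct as written, but it yields a \emph{lower} bound on $-\tfrac{d}{dt}\int_{\{|u|>t\}}|\nabla u|^{q}$, hence a \emph{lower} bound on $\int_{\Omega}|\nabla u|^{q}\dmeas$. The displayed inequality $\int_{\Omega}|\nabla u|^{q}\dmeas\le\int_{0}^{\infty}(-P'(t))^{q}/(-\mu'(t))^{q-1}\,\de t$ that you deduce from it is therefore the wrong direction, and the rest of the argument collapses. (There is also a minor slip: the final integrand should carry $\isoKN(\sigma)^{-q}$, not $\isoKN(\sigma)^{-2q}$.)

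The paper avoids this by using H\"older with exponents $2/q$ and $2/(2-q)$ the other way round, bounding $\int_{\{t<|u|\le t+h\}}|\nabla u|^{q}$ \emph{from above} by $\bigl(\int_{\{t<|u|\le t+h\}}|\nabla u|^{2}\bigr)^{q/2}\meas(\{t<|u|\le t+h\})^{(2-q)/2}$. Passing to the limit $h\downarrow 0$ and invoking the estimate \eqref{eq:KesThm3_1_1-4} on the $|\nabla u|^{2}$-integral (which is where the ellipticity constant $\alpha$ enters) gives
\[
-\frac{d}{dt}\int_{\{|u|>t\}}|\nabla u|^{q}\dmeas\;\le\;\Bigl(\tfrac{1}{\alpha}F(\mu(t))\Bigr)^{q/2}(-\mu'(t))^{(2-q)/2}.
\]
One then multiplies this by the $q/2$-th power of \eqref{eq:keyODE} to produce a clean $(-\mu'(t))$ on the right, and only at that stage integrates and changes variables. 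The point is that the $q$-gradient cannot be controlled directly through $-P'(t)$; one must pass through the quadratic quantity, which is what the bilinear-form estimate in \cref{lemma:KesThm3_1_1-Step2RCD} actually controls.
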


\begin{remark}
The Dirichlet problem in \cref{eq:symm_problem} can be explicitly rewritten as
	\begin{equation*}
	\left\lbrace
	\begin{aligned}
	&-w''- \frac{\hKN'}{\hKN} w' =\frac{1}{\alpha}f^\star \quad \text{in $I$}\\
	&w(\HKN^{-1}(\meas(\Omega)))=0  
	\end{aligned}
	\right.,
	\end{equation*}
by the definition of $\lapKN$ and $\HKN$.
\end{remark}

\begin{proof}
{\textbf{Proof of 1.}} By combining \cref{lemma:KesProp1_2_2}, \cref{lemma:KesThm3_1_1-Step2RCD} and \cref{corollary:KesCor2_2_3}, we obtain the following chain of inequalities:
	\begin{equation} \label{eq:main_thm0}
	\begin{split}
	\isoKN(\mu(t))^2&  \leq 
		\pths*{-\frac{d}{dt}{\int_{\brcs{\abs{u}>t}}\mwug{u}\dmeas}}^2   \leq
		- \frac{1}{\alpha} \mu'(t) \int_{\brcs{\abs{u}>t}}  \abs{f} \dmeas  \\
	& \leq - \frac{1}{\alpha} \mu'(t) \int_0^{\mu(t)} f^\sharp (s)\,\de s
	\end{split}
	\end{equation}
for almost every $t>0$, which can be rewritten as
	\begin{equation} \label{eq:main_thm1}
	1\leq- \frac{\mu'(t)}{\alpha \,\isoKN(\mu(t))^2} \int_0^{\mu(t)} f^\sharp (s)\,\de s
	\end{equation}
for almost every $t\in\pths{0, M}$, where $M=\ess \sup u$. For $\xi>0$ let
	\begin{equation}\label{eq:bigF}
	F(\xi)\doteq \int_0^\xi f^\sharp (s) \,\de s.
	\end{equation}
Let now $0\leq \tau'<\tau \leq M$. Integrating \cref{eq:main_thm1} from $\tau'$ to $\tau$ we get
	\begin{equation*}
	\tau-\tau'\leq  \frac{1}{\alpha} \int_{\tau'}^\tau	\frac{F(\mu(t))}{\isoKN(\mu(t))^2}(-\mu'(t)) \,\de t, \qquad 0\leq \tau'<\tau \leq M.
	\end{equation*}	
Using the change of variables $\xi=\mu(t)$ on the intervals where $\mu$ is absolutely continuous, and observing that the integrand is non negative, we obtain
	\begin{equation*}
	\tau-\tau'\leq \frac{1}{\alpha}\int_{\mu(\tau)}^{\mu(\tau')} \frac{F(\xi)}{\isoKN(\xi)^2} \,\de \xi, \qquad 0\leq \tau'<\tau \leq M.
	\end{equation*}
Let us fix $s\in \pths{0,\mu(0)}$ and let $\eta>0$ be a small enough parameter (that will eventually tend to $0$);  
 consider $\tau'=0$ and $\tau=u^\sharp(s)-\eta$.
Notice that, since $u^\sharp(s)$ is the infimum of the $\tilde{\tau}$ such that $\mu(\tilde{\tau})<s$, we have that $\mu(\tau)\geq s$. Using again the non-negativity of the integrand, for any $\eta>0$ we obtain that
	\begin{equation*}
	u^\sharp (s)-\eta\leq \frac{1}{\alpha}\int_s^{\mu(0)} \frac{F(\xi)}{\isoKN(\xi)^2}\,\de \xi, \qquad \forall s\in(0,\mu(0)).
	\end{equation*}
Letting $\eta\downarrow 0$ and enlarging the integration interval, we get:
	\begin{equation}\label{eq:main_thm2}
	u^\sharp (s) \leq \frac{1}{\alpha}\int_s^{\meas(\Omega)} \frac{1}{\isoKN(\xi)^2} \int_0^\xi f^\sharp (t) \,\de t\,\de \xi,
	\qquad \forall s\in (0,\meas(\Omega)).
	\end{equation}
Notice that on $(\mu(0),\meas(\Omega))$ the function $u^\sharp$ vanishes. Finally, by the definition of the symmetrized function $u^\star=u^\sharp \circ \HKN$, we obtain
	\begin{equation*}
	u^\star (x)\leq \frac{1}{\alpha}\int_{\HKN(x)}^{\meas(\Omega)} \frac{1}{\isoKN(\xi)^2} \int_0^\xi f^\star (\HKN^{-1}(t)) \,\de t\,\de \xi,
	\qquad \forall x\in\JKN.
	\end{equation*}
Now we can recognize that the right hand side coincides with the characterization of $w$ we obtained in \cref{eq:repr_form_2} (\cref{subsec:equation_on_model_space}), since $r_{v}$ was chosen so that $\HKN(r_{v})=\meas(\Omega)$. Note that, since the integrand is non-negative, $w$ is non-increasing, and takes the value zero at $r_{v}$.
\medskip

{\textbf{Proof of 2.}} We start by noticing that 
	\begin{equation*}
	\int_\Omega \mwug{u}^q \dmeas	= \int_{\brcs{\abs{u}>0}} \mwug{u}^q \dmeas	,
	\end{equation*}
	since $\mwug{u} = 0$ $\meas$-a.e.~on $\brcs{u=\kappa}$ for any $\kappa\in\R$.
	Let $M:=\ess \sup_{\Omega} |u|$;  fix $t>0$ and  $0<h<M-t$.
By using the \Holder{} inequality (with exponents $\frac{2}{q}$ and $\frac{2}{2-q}$) one gets
	\begin{equation}\label{eq:gradients_proof_1}
	\frac{1}{h} \int_{\brcs{t<\abs{u}\leq t+h}} \mwug{u}^q \dmeas 
	\leq 
	\pths*{\frac{1}{h} \int_{\brcs{t<\abs{u}\leq t+h}} \mwug{u}^2 \dmeas}^{\frac{q}{2}}\pths*{\frac{\meas\pths*{\brcs{t<\abs{u}\leq t+h}}}{h}}^{\frac{2-q}{2}} . 
	\end{equation}
By the very same computations we already performed in \cref{lemma:KesThm3_1_1-Step2RCD}, exploiting the test functions $v_t\in W^{1,2}_0(\Omega)$ defined in \cref{eq:test_func} (see \cref{equation:Uprime,eq:KesThm3_1_1-4,eq:preGradBound}), we can let $h$ tend to zero in \cref{eq:gradients_proof_1} and obtain 
that the map 
	\begin{equation*}
	t\mapsto \int_{\brcs{\abs{u}>t}} \mwug{u}^q \dmeas
	\end{equation*}
is absolutely continuous on $(0,M)$ and thus
\begin{equation}\label{eq:gradients_proof_0}
	\int_\Omega \mwug{u}^q \dmeas = \int_0^M -\frac{d}{dt} \int_{\brcs{\abs{u}>t}}\mwug{u}^q \dmeas \, \de t;
	\end{equation}
	moreover  
	\begin{equation*}
	-\frac{d}{dt} \int_{\brcs{\abs{u}>t}} \mwug{u}^q \dmeas \leq \pths*{\frac{1}{\alpha} \int_{\brcs*{\abs{u}>t}}f\,\dmeas}^{\frac{q}{2}}(-\mu'(t))^{\frac{2-q}{2}}.
	\end{equation*}
Let us now adopt again the notation 	
	\begin{equation*}
	F(\xi)\doteq  \int_0^\xi f^\sharp (s) \,\de s,
	\end{equation*}
as in \cref{eq:bigF}. Exploiting again \cref{lemma:KesProp1_2_2}, we get:
	\begin{equation}\label{eq:gradients_proof_2}
	-\frac{d}{dt} \int_{\brcs{\abs{u}>t}} \mwug{u}^q \dmeas \leq \pths*{\frac{F(\mu(t))}{\alpha}}^{\frac{q}{2}}(-\mu'(t))^{\frac{2-q}{2}}
	\end{equation}
for almost every $t$. In order to obtain a clean term $\mu'(t)$ at the right hand side, we multiply both sides of \cref{eq:gradients_proof_2} with the respective sides of \cref{eq:main_thm1} raised at the power $\frac{q}{2}$. This gives, for almost every $t\in(0,M)$:
	\begin{equation*}
	-\frac{d}{dt} \int_{\brcs{\abs{u}>t}} \mwug{u}^q \dmeas \leq \pths*{\frac{F(\mu(t))}{\alpha \isoKN(\mu(t))}}^{q}(-\mu'(t)).
	\end{equation*}
Inserting this last inequality in \cref{eq:gradients_proof_0} and changing the variables as usual with $\xi=\mu(t)$, the following estimate holds:
	\begin{equation}\label{eq:gradients_proof_3}
	\int_\Omega \mwug{u}^q \dmeas \leq \int_0^{\meas(\Omega)} \pths*{\frac{F(\xi)}{\alpha \isoKN(\xi)}}^q \,\de \xi.
	\end{equation}
	
Finally, we recall that $w$ has an explicit expression we can differentiate: by differentiating \cref{eq:repr_form_1} (with datum $\frac{f^\star}{\alpha}$), we find for all $\rho\in(0,r_{v})$
	\begin{equation*}
	w'(\rho)=-\frac{1}{\hKN(\rho)}\int_0^{\HKN(\rho)}\frac{1}{\alpha} f^\star(\HKN^{-1}(t))\,\de t = -\frac{F(\HKN(\rho))}{\alpha \hKN(\rho)}.
	\end{equation*}
Thus, the following identity holds true:
	\begin{equation}\label{eq:gradients_proof_4}
	\begin{split}
	\int_0^{r_{v}} \abs{w'(\rho)}^q \dmKN &= \int_0^{r_{v}} \pths*{\frac{F(\HKN(\rho))}{\alpha \hKN(\rho)}}^q \hKN(\rho)\,\de\rho =	
	\int_0^{\meas (\Omega)} \pths*{\frac{F(\xi)}{\alpha \isoKN(\xi)}}^q\,\de\xi,
	\end{split}
	\end{equation}
	where we have used the change of variables $\xi=\HKN(\rho)$ and the fact that $\isoKN(\xi)=\hKN(\HKN^{-1}(\xi))$.
	Comparing with \cref{eq:gradients_proof_3}, we obtain the claimed $L^{q}$-gradient estimate.

\end{proof}


\section{Rigidity and Stability}
\label{sec:rig_arig}


\subsection{Rigidity in the Talenti-type theorem}
\label{subsec:rigidity}

Let $u\in W^{1,2}_0(\Omega)$ and $w\in W^{1,2}([0,r_{v}),\deu,\mKN)$ be as in \cref{theorem:talenti}. The next problem we want to approach is the equality case, that is, what we can say about the original \mms{} when $u^\star=w$; in fact, we will prove that if the equality is attained at least at one point, then the \mms{} is forced to have a particular structure, namely it is a spherical suspension. We recall that, in the Euclidean case $\Omega\subset \R^n$, the condition $u^\star=w$ forces $\Omega$ to be a ball and both $u$ and $f$ to be radial.

In order to tackle this question, we recall the definition of a spherical suspension, we state the Rigidity Theorem for the Lévy-Gromov inequality (as proved in \cite{Cavalletti2018a}) and the \PSz{} Theorem for $\RCD(K,N)$ spaces, which was proved in \cite{Mondino2019}.

\begin{definition}[Spherical suspensions]
Let $(B,\dist_B,\meas_B)$ and $(F,\dist_F,\meas_F)$ be geodesic \mms{}s and $f:B\to [0,\infty)$ be a Lipschitz function. Let $\dist$ be the pseudo-distance on $B\times F$ defined by
	\begin{equation*}
	\dist((p,x),(q,y))\doteq \inf\brcs*{L(\gamma) \stset \gamma(0)=(p,x), \gamma(1)=(q,y)},
	\end{equation*}
where, for any absolutely continuous curve $\gamma = (\gamma_B, \gamma_F):[0,1]\to B\times F$,
	\begin{equation*}
	L(\gamma)\doteq \int_0^1 \pths*{\abs{\gamma_B'}^2+(f\circ \gamma_B)^2 \abs{\gamma_F'}^2}^{\frac{1}{2}} \,\de t.
	\end{equation*}
Given $N\geq 1$, we define $B \times_f^N F$ to be the \mms{} 
	\begin{equation*}
	\pths*{(B\times F)\slash\!\sim, \dist, \meas},
	\end{equation*}
where $\sim$ is the equivalence relation associated to the pseudo-distance $\dist$ and $\meas\doteq f^N \meas_B\otimes \meas_F$.

We say that an $\RCD(N-1,N)$ space $\Xdm$ is a \emph{spherical suspension} if it is isomorphic to $[0,\pi]\times^{N-1}_{\sin} \Ymms$ for an $\RCD(N-2,N-1)$ space $\Ydm$ with $\measY(\Ymms)=1$.
\end{definition}

Just for simplicity, the following results are stated in the case of $\RCD(N-1,N)$ spaces;  indeed  when $K>0$ it is not restrictive to assume $K=N-1$ by \eqref{eq:Scaling}.
Notice, moreover, that this assumption only affects the Rigidity statements, while the \PSz{} inequality holds in the very same form for general $K>0$.

\begin{theorem}[Rigidity for Lévy-Gromov, \cite{Cavalletti2018a}]
Let $\Xdm$ be an $\RCD(N-1 ,N)$ space for some $N\in[2,+\infty)$, with $\meas(\Xmms)=1$. 
Assume there exists $\bar{v}\in(0,1)$ such that $\mcI_{\Xdm}(\bar{v})=\mcI_{N-1,N}(\bar{v})$. 
Then $\Xdm$ is a spherical suspension: \ie{}, there exists an $\RCD(N-2,N-1)$ space $\Ydm$ with $\measY(\Ymms)=1$ such that
		\begin{equation*}
		\text{$\Xdm$ is isomorphic as a \mms{} to $[0,\pi]\times^{N-1}_{\sin} \Ymms$}.
		\end{equation*}
\end{theorem}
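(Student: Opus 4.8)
The statement is the Cavalletti--Mondino rigidity theorem for the L\'evy--Gromov inequality \cite{Cavalletti2018a}; the plan is to prove it via the $L^{1}$-localization (``needle decomposition'') technique, reducing the rigidity to the maximal diameter theorem for $\RCD(N-1,N)$ spaces. Since $K>0$, by the scaling \eqref{eq:Scaling} we may normalize $K=N-1$, so that the one-dimensional model is $\JNN=[0,\pi]$, the density is $\hNN\propto\sin^{N-1}$, and $\DKN=\pi$. As $(\Xmms,\dist)$ is compact and $\meas$ is doubling and supports a Poincar\'e inequality, bounded sequences of finite-perimeter sets are precompact in $L^{1}(\meas)$ and the perimeter is lower semicontinuous; hence the infimum defining $\mcI_{\Xdm}(\bar v)$ is attained, say by a Borel set $E$ with $\meas(E)=\bar v$ and $\Per(E)=\mcI_{N-1,N}(\bar v)$. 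The strategy is then: localize the mean-zero function $\chi_E-\bar v$ into one-dimensional $\CD(N-1,N)$ needles, show that equality forces each needle to be the full model interval, deduce $\operatorname{diam}(\Xmms)=\pi$, and conclude by maximal diameter rigidity.

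First I would apply the $L^{1}$-localization theorem, which is available here because an $\RCD(N-1,N)$ space is \enb{}, to $f=\chi_E-\bar v$ (which has zero $\meas$-mean). This yields, up to a $\meas$-negligible set, a partition of $\Xmms$ into transport rays $\{X_q\}_{q\in Q}$, each isometric to an interval $[0,L_q]$ with $L_q\le\pi$, together with a disintegration $\meas=\int_Q\meas_q\,\de\mathfrak{q}(q)$, where $\mathfrak{q}$ and each $\meas_q$ are probability measures, $\meas_q$ is supported on $X_q$, and its density $h_q$ (with respect to $\leb^1$ under the arclength identification) satisfies the one-dimensional $\CD(N-1,N)$ condition; moreover the balance condition $\int_{X_q}f\,\de\meas_q=0$, i.e.\ $\meas_q(E\cap X_q)=\bar v$, holds for $\mathfrak{q}$-a.e.\ $q$. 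Combining the perimeter disintegration inequality provided by the localization with the one-dimensional isoperimetric comparison (the isoperimetric profile of any one-dimensional $\CD(N-1,N)$ probability density is pointwise bounded below by $\mcI_{N-1,N}$), one obtains
\begin{equation*}
\mcI_{N-1,N}(\bar v)=\Per(E)\ \ge\ \int_Q\Per_{X_q}(E\cap X_q)\,\de\mathfrak{q}(q)\ \ge\ \int_Q\mcI_{N-1,N}(\bar v)\,\de\mathfrak{q}(q)\ =\ \mcI_{N-1,N}(\bar v).
\end{equation*}

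Since the two ends of this chain coincide, equality holds at every step: for $\mathfrak{q}$-a.e.\ $q$ the set $E\cap X_q$ is a one-dimensional isoperimetric set for $\meas_q$ and $h_q$ achieves equality in the one-dimensional comparison at the single interior value $\bar v\in(0,1)$. The decisive rigidity input, which I expect to be the main obstacle, is the one-dimensional statement that such an equality at one interior value forces, by an ODE (concavity) argument on $h_q^{1/(N-1)}$, that $L_q=\pi$ and $h_q$ is a reparametrization of $\hNN$ on $[0,\pi]$, with $E\cap X_q$ one of the model isoperimetric subintervals. Granting this, $\mathfrak{q}$-a.e.\ ray is a geodesic of length exactly $\pi$, so its endpoints satisfy $\dist=\pi$; together with the Bonnet--Myers bound $\operatorname{diam}(\Xmms)\le\pi$ this gives $\operatorname{diam}(\Xmms)=\pi$.

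Finally, I would invoke the maximal diameter theorem for $\RCD(N-1,N)$ spaces (due to Ketterer): an $\RCD(N-1,N)$ space of diameter $\pi$ is isomorphic, as a \mms{}, to a spherical suspension $[0,\pi]\times^{N-1}_{\sin}\Ymms$ over some $\RCD(N-2,N-1)$ space $\Ydm$ with $\measY(\Ymms)=1$, which is exactly the claimed conclusion; this also accounts for the hypothesis $N\ge2$, which is needed for $\RCD(N-2,N-1)$ to be a meaningful curvature-dimension bound. Apart from the one-dimensional rigidity propagation highlighted above, the remaining ingredients (measurable selection of the rays, the fact that a set of finite perimeter disintegrates into sets of $\mathfrak{q}$-a.e.\ finite one-dimensional perimeter, and the perimeter slicing inequality) are part of the localization package already available for \enb{} $\CD(K,N)$ spaces and can be quoted.
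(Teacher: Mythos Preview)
The paper does not prove this statement; it is quoted verbatim from \cite{Cavalletti2018a} and used as a black box in the proof of \cref{thm:talenti_rigidity}. Your sketch is a faithful outline of the Cavalletti--Mondino argument itself: existence of an isoperimetric set via compactness and lower semicontinuity of the perimeter, $L^{1}$-localization of $\chi_E-\bar v$ into $\CD(N-1,N)$ needles with the balance condition, equality propagation along the chain $\Per(E)\ge\int_Q\Per_{X_q}(E\cap X_q)\,\de\mathfrak q\ge\mcI_{N-1,N}(\bar v)$, the one-dimensional rigidity forcing $L_q=\pi$ and $h_q\propto\sin^{N-1}$, and finally Ketterer's maximal diameter theorem. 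The step you flag as the ``decisive rigidity input'' (equality at a single $\bar v\in(0,1)$ forces the needle to be the full model) is indeed the technical heart, and is established in \cite{Cavalletti2018a} via the concavity of $h_q^{1/(N-1)}$ inherited from the $\CD(N-1,N)$ condition; so there is no gap in your outline, only a deferred lemma that is genuinely nontrivial.
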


\begin{theorem}[\PSz\,  for $\RCD(N-1, N)$ spaces,  \cite{Mondino2019}]\label{thm:psz}
	Let $\Xdm$ be an $\RCD(N-1 ,N)$ space for some $N\in[2,+\infty)$, with $\meas(\Xmms)=1$. Let $\Omega\subset \Xmms$ be an open subset with measure $\meas(\Omega)=v\in (0,1)$ and let $r_{v}\in (0,\pi)$ such that $ \mNN([0,r_{v}])=v$. Then,	for every  $p\in (1,\infty)$, the following hold:
	\begin{enumerate}[(i)]
	\item \textnormal{\PSz\, comparison}: for any $u\in W^{1,p}_0(\Omega)$, it holds that $u^\star(r_{v})=0$ and
		\begin{equation}\label{eq:psz}
		\int_0^{r_{v}}\abs{\nabla u^\star}^p \dmNN \leq \int_\Omega \abs{\nabla u}^p \dmeas.
		\end{equation}
	\item \textnormal{Rigidity}: if there exists $u\in W^{1,2}_0(\Omega)$ with $u \not\equiv 0$, achieving equality in  \cref{eq:psz}, then $\Xdm$ is a spherical suspension.
	\item \textnormal{Rigidity for Lipschitz functions}: if there exists $u\in W^{1,2}_0(\Omega)\cap \Lip(\Omega)$ with $u\not\equiv 0$ and $\nabla u\neq 0$ $\meas$-a.e.~in $\spt (u)$, achieving equality in  \cref{eq:psz}, then $\Xdm$ is a spherical suspension and $u$ is radial: that is, $u$ is of the form $u=g(\dist(\cdot,x_0))$, with $x_0$ being the tip of a spherical suspension structure of $X$, and $g:[0,\pi]\to \R$ satisfying $\abs*{g}=u^\star$.
	\end{enumerate}
\end{theorem}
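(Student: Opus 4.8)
The plan is to transplant the classical rearrangement proof of the \PSz{} inequality to the $\RCD$ setting, the only genuinely new ingredient being that the Euclidean isoperimetric inequality is replaced by the Lévy--Gromov inequality (\cref{prop:levy_gromov}) and that the symmetrization lives on the one-dimensional model $(\JNN,\deu,\mNN)$ rather than on a Euclidean ball. First I would reduce \emph{(i)} to the case $u\in\Lip_c(\Omega)$, $u\ge 0$: since $\mwug{\abs u}=\mwug u$ $\meas$-a.e.\ and $u$, $\abs u$ have the same Schwarz symmetrization this handles the sign, while for the reduction to Lipschitz data one uses density of $\Lip_c(\Omega)$ in $\Wp_0(\Omega)$, continuity of $u\mapsto u^\star$ from $L^p$ to $L^p$ (\cref{prop:equimeas}) and lower semicontinuity of the one-dimensional $p$-energy along $L^p$-convergence.

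For such $u$, writing $\mu=\mu_u$ for the distribution function, the core of the proof is the chain of (in)equalities
	\begin{equation*}
	\int_\Omega \mwug{u}^p\dmeas\ \ge\ \int_0^{+\infty}\frac{\Per(\brcs{u>t})^p}{(-\mu'(t))^{p-1}}\,\de t\ \ge\ \int_0^{+\infty}\frac{\mcI_{N-1,N}(\mu(t))^p}{(-\mu'(t))^{p-1}}\,\de t\ =\ \int_0^{r_{v}}\abs{(u^\star)'}^p\dmNN.
	\end{equation*}
The first inequality is obtained from the coarea formula (\cref{prop:coarea_formula}, applied with weight $\mwug u^{p-1}$, in its $\Wp_0$-version) combined with Hölder's inequality on the perimeter measure $\Per(\brcs{u>t},\cdot)$ and the coarea identity $-\mu'(t)=\int\mwug u^{-1}\,\de\Per(\brcs{u>t})$, valid for a.e.\ $t$; the second inequality is exactly Lévy--Gromov, $\Per(\brcs{u>t})\ge\mcI_{N-1,N}(\mu(t))$; and the final equality is a direct computation on the model using $u^\star=u^\sharp\circ\HNN$, the formula $\mcI_{N-1,N}=\hNN\circ\HNN^{-1}$ of \cref{lemma:isoprof_model}, and the changes of variable $\xi=\HNN(\rho)$ and $\xi=\mu(t)$. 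This proves \cref{eq:psz}; the identity $u^\star(r_{v})=0$ is a standard consequence of $u\in\Wp_0(\Omega)$ and $\meas(\Omega)=v<1$ (the zero-extension of $u$ lies in $\Wp\Xdm$, so $u$ cannot be $\meas$-essentially bounded below by a positive constant on $\Omega$, whence $u^\sharp(v)=0$).

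For the rigidity statement \emph{(ii)}: if $u\not\equiv 0$ achieves equality in \cref{eq:psz}, then equality must hold for $\leb^1$-a.e.\ $t$ in each step of the chain above; in particular $\Per(\brcs{u>t})=\mcI_{N-1,N}(\mu(t))$ for a.e.\ $t$, and since $\mcI_{N-1,N}(\mu(t))\le\mcI_{\Xdm}(\mu(t))\le\Per(\brcs{u>t})$ (Lévy--Gromov and the definition of the isoperimetric profile), one gets $\mcI_{\Xdm}(\mu(t))=\mcI_{N-1,N}(\mu(t))$; choosing one such $t$ with $\mu(t)\in(0,1)$ — possible since $0<\mu(t)\le v<1$ on the relevant range of $t$ — and invoking the rigidity part of the Lévy--Gromov theorem (\cite{Cavalletti2018a}) shows that $\Xdm$ is a spherical suspension. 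For \emph{(iii)}, equality in the Hölder step additionally forces $\mwug u$ to be constant $\Per(\brcs{u>t},\cdot)$-a.e.\ on almost every level set, while the equality-case analysis of \cite{Cavalletti2018a} identifies the superlevel sets $\brcs{u>t}$, up to $\meas$-negligible sets, with geodesic balls centred at the tip $x_0$ of the suspension; combining these two facts forces $u$ to be ($\meas$-a.e.\ equal to) a monotone function of $\dist(\cdot,x_0)$, and the hypotheses $u\in\Lip(\Omega)$, $\mwug u\ne 0$ $\meas$-a.e.\ on $\spt u$, promote this to the representation $u=g(\dist(\cdot,x_0))$ with $\abs g=u^\star$ by equimeasurability on the suspension.

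I expect the main obstacle to be the rigorous execution of the first inequality and of the change of variables in the non-smooth setting: the coarea formula is phrased through the perimeter measure rather than a Hausdorff measure, $\mu$ need not be absolutely continuous, $\mwug u$ may vanish on sets of positive measure (so the appearance of the weight $\mwug u^{-1}$ and the coarea identity for $-\mu'$ must be justified, e.g.\ by truncation/approximation and by showing that $\mwug u>0$ holds $\Per(\brcs{u>t},\cdot)$-a.e.\ for a.e.\ $t$), and one must carefully isolate the negligible set of ``bad'' levels (flat parts of $\mu$, critical values) for both energies, using the fine properties of $u^\sharp$ (left continuity, generalized-inverse relation with $\mu$, \cref{prop:equimeas}, \cref{lem:decr_rearr}) and the explicit isoperimetric profile of the model. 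For \emph{(iii)} the delicate step is passing from ``$\mwug u$ constant on level sets'' together with ``level sets are balls about $x_0$'' to the radial form of $u$, which genuinely requires the full equality discussion of \cite{Cavalletti2018a}.
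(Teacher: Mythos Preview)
The paper does not prove this statement at all: \cref{thm:psz} is quoted verbatim from \cite{Mondino2019} (the reference is in the theorem header), and the paper uses it as a black box in the proof of \cref{thm:talenti_rigidity} and in \cref{lem:conv_of_rearr}. So there is no ``paper's own proof'' to compare against; you have written a sketch of a proof for a result that the authors import from the literature.

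That said, your outline is essentially the strategy of \cite{Mondino2019}: reduce to non-negative Lipschitz $u$, combine the coarea formula with H\"older on the level-set perimeter measures to bound $\int\mwug{u}^p\dmeas$ from below by the integral of $\Per(\brcs{u>t})^p/(-\mu'(t))^{p-1}$, apply L\'evy--Gromov, and identify the result with the model energy via $\isoKN=\hKN\circ\HKN^{-1}$. Your list of technical obstacles (possible lack of absolute continuity of $\mu$, justifying the weight $\mwug{u}^{-1}$ in coarea, handling flat parts of $\mu$, and the passage from equality on level sets to radiality in part \emph{(iii)}) is accurate and is precisely what \cite{Mondino2019} spends most of its effort on. One point to be careful about in your reduction step: lower semicontinuity of the one-dimensional $p$-energy along mere $L^p$-convergence of the $u_n^\star$ is not enough on its own to pass \eqref{eq:psz} to the limit; in \cite{Mondino2019} this is handled by first showing directly that $u^\star\in\Wp$ on the model (via the explicit formula for $(u^\sharp)'$ in terms of $\mu$ and the isoperimetric profile) rather than by an abstract semicontinuity argument.
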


The following rigidity result for the Talenti-type comparison theorem will build on top of the rigidity   in the Lévy-Gromov and \PSz{} inequalities.

\newcommand{\bx}{\bar{x}}
\newcommand{\bs}{\bar{s}}
\newcommand{\ust}{u^\star}
\newcommand{\ush}{u^\sharp}
\newcommand{\wsh}{w^\sharp}
\begin{theorem}[Rigidity for Talenti in $\RCD$]
\label{thm:talenti_rigidity}
Let $\Xdm$ be an $\RCD(N-1,N)$ space for some $N\in [2,\infty)$, with $\meas(\Xmms)=1$, and let $\Omega \subset \Xmms$ be an open domain with measure $\meas(\Omega)=v\in(0,1)$. Let $f\in L^2(\Omega,\meas)$, with $f\not\equiv 0$.
Let $\Eform$ be a $\alpha$-uniformly elliptic bilinear form as in \cref{assumption:eform} and assume that  $u\in W^{1,2}_{0}(\Omega)$ is a weak solution to the equation $-\Eop (u)=f$.
Let also $w\in W^{1,2}(I,\deu,\mNN)$ be a solution to the problem
	\begin{equation*}
	\left\lbrace
	\begin{aligned}
	-\alpha \lapNN w&=f^\star \quad \text{in $I$}\\
	w(r_{v})&=0  
	\end{aligned}
	\right.
	\end{equation*}
where $I=[0,r_{v})$, $r_{v}\in (0,\pi)$ is such that $\mNN([0,r_{v}))=v$, and $f^\star$ is the Schwarz symmetrization of $f$. Assume that $u^\star(\bx)=w(\bx)$ for a point $\bx\in[0,r_{v})$.
Then:
	\begin{enumerate}
		\item $\ust=w$ in the whole interval $[\bx, r_v]$;
		\item $\Xdm$ is a spherical suspension, \ie{} there exists an $\RCD(N-2,N-1)$ space $\Ydm$ with $\measY(\Ymms)=1$ such that $\Xdm$ is isomorphic as a \mms{} to $[0,\pi]\times^{N-1}_{\sin} \Ymms$;
		\item if $\bx=0$, $u\in\Lip(\Omega)$ and $\mwug{u}\neq 0$ $\meas$-a.e.~in $\spt (u)$, then $u$ is radial:  that is, $u$ is of the form $u=g(\dist(\cdot,x_0))$, with $x_0$ being the tip of a spherical suspension structure of $X$, and $g:[0,\pi]\to \R$ satisfying $\abs*{g}=u^\star$.
	\end{enumerate}
\end{theorem}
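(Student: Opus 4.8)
The plan is to retrace the proof of \cref{theorem:talenti} and to convert the single equality $\ust(\bx)=w(\bx)$ into equality, along the super-level sets $\brcs{\abs{u}>t}$, in each of the three inequalities entering that proof: the Lévy--Gromov inequality used in \cref{corollary:KesCor2_2_3}, the Cauchy--Schwarz plus $\alpha$-ellipticity bound of \cref{lemma:KesThm3_1_1-Step2RCD}, and the rearrangement bound \cref{lemma:KesProp1_2_2}; the rigidity of Lévy--Gromov \cite{Cavalletti2018a} and of \PSz{} (\cref{thm:psz}) then supply the structural conclusions. For item 1, set $\bs\doteq\HNN(\bx)\in[0,v)$ and, with the notation of the proof of \cref{theorem:talenti}, $F(\xi)\doteq\int_0^\xi f^\sharp(\sigma)\,\de\sigma$ and $\tilde w(s)\doteq\tfrac1\alpha\int_s^v\frac{F(\xi)}{\isoKN(\xi)^2}\,\de\xi$, so that $w=\tilde w\circ\HNN$ and the comparison $\ust\le w$ reads $\ush\le\tilde w$ on $(0,v)$, while the hypothesis becomes $\ush(\bs)=\ust(\bx)=w(\bx)=\tilde w(\bs)$. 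First I would show that $G\doteq\tilde w-\ush\ge0$ is non-increasing on $(0,v)$: since $\tilde w'=-F/(\alpha\isoKN^2)$, it suffices to check $(\ush)'\ge -F/(\alpha\isoKN^2)$ at the a.e.\ differentiability points of $\ush$, which follows from the differential inequality $1\le-\mu'(t)F(\mu(t))/(\alpha\isoKN(\mu(t))^2)$ obtained inside the proof of \cref{theorem:talenti} together with the mutual-inverse relation between $\mu$ and $\ush$ (equivalently, by integrating $\tau-\tau'\le\tfrac1\alpha\int_{\mu(\tau)}^{\mu(\tau')}F/\isoKN^2$ and changing variables $\xi=\mu(t)$). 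Since $G(\bs)=0$ and $G\ge0$ is non-increasing, $G\equiv0$ on $[\bs,v]$, i.e.\ $\ust=w$ on $[\bx,r_v]$.

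For item 2, tracing equality backwards shows that $\ush=\tilde w$ on $[\bs,v]$ forces the chain $\isoKN(\mu(t))^2\le\pths*{-\tfrac{\de}{\de t}\int_{\brcs{\abs{u}>t}}\mwug{u}\dmeas}^2\le-\tfrac1\alpha\mu'(t)\int_{\brcs{\abs{u}>t}}\abs{f}\dmeas\le-\tfrac1\alpha\mu'(t)F(\mu(t))$ to consist of equalities for a.e.\ $t\in(0,\ush(\bs))$ (note $\ush(\bs)=w(\bx)>0$ since $f\not\equiv0$). In particular, by the coarea formula \cref{prop:coarea_formula}, $\Per(\brcs{\abs{u}>t})=\isoKN(\meas(\brcs{\abs{u}>t}))$ for a.e.\ such $t$; fixing one and putting $\bar v\doteq\meas(\brcs{\abs{u}>t})$, one has $\bar v\in(0,1)$ (positive since $t<\ess\sup\abs{u}$, and $\bar v\le v<1$ by hypothesis), so \cref{prop:levy_gromov} squeezes $\isoKN(\bar v)\le\mcI_{\Xdm}(\bar v)\le\Per(\brcs{\abs{u}>t})=\isoKN(\bar v)$, whence $\mcI_{\Xdm}(\bar v)=\mcI_{N-1,N}(\bar v)$ (recall $K=N-1$, so $\isoKN=\mcI_{N-1,N}$). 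The rigidity for Lévy--Gromov \cite{Cavalletti2018a} then yields that $\Xmms$ is a spherical suspension.

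For item 3, when $\bx=0$ item 1 gives $\ust=w$ on all of $[0,r_v]$. By \cref{theorem:talenti} with $q=2$ one has $\int_\Omega\mwug{u}^2\dmeas\le\int_0^{r_v}\abs{w'}^2\dmNN=\int_0^{r_v}\mwug{\ust}^2\dmNN$ (the last equality because $\ust=w$), while the \PSz{} inequality \cref{thm:psz} with $p=2$ gives the reverse inequality $\int_0^{r_v}\mwug{\ust}^2\dmNN\le\int_\Omega\mwug{u}^2\dmeas$; hence $u$ realises equality in \PSz{}. Since $f\not\equiv0$ forces $u\not\equiv0$ (testing $-\Eop(u)=f$ against $u$), and $u\in\Lip(\Omega)$ with $\mwug{u}\neq0$ $\meas$-a.e.\ in $\spt(u)$ by assumption, \cref{thm:psz}(iii) applies and gives $u=g(\dist(\cdot,x_0))$ with $x_0$ the tip of a spherical suspension structure of $\Xmms$ and $\abs{g}=\ust$.

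The main obstacle is item 1: one must make the monotonicity of $\tilde w-\ush$ rigorous even though the distribution function $\mu$ may have flat parts and jump discontinuities (corresponding, respectively, to jumps and flat parts of $\ush$), which requires care in the a.e.\ differentiation and in the change of variables $\xi=\mu(t)$. A clean way to organise the proof is to first observe that equality in \cref{theorem:talenti} already forces $\mu$ to be continuous and strictly decreasing on the relevant range and $\meas(\brcs{\abs{u}>0}\cap\Omega)=\meas(\Omega)$, and to derive these regularity consequences before running the above bookkeeping of which inequalities are saturated and on which range of $t$.
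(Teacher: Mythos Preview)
Your proposal is correct and follows essentially the same strategy as the paper. The paper isolates the monotonicity of $w-\ust$ on $[0,r_v]$ as a separate lemma (proved exactly along your lines, via $(\wsh-\ush)'\le 0$ a.e.\ and the differential inequality for $\mu$), and then deduces items 1--3 precisely as you do: item 1 from monotonicity plus $w-\ust\ge 0$, item 2 from equality in the Lévy--Gromov step of the chain \eqref{eq:main_thm0} and the rigidity of \cite{Cavalletti2018a}, and item 3 by sandwiching $\int|\nabla\ust|^2$ between the \PSz{} inequality and the $q=2$ gradient comparison \eqref{eq:GradComp}.

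One remark on your final paragraph: the paper does \emph{not} first deduce continuity/strict monotonicity of $\mu$ from equality (which would be somewhat circular, since you need the monotonicity of $w-\ust$ to propagate equality from the single point $\bx$). Instead, the proof of the monotonicity lemma handles the possible flat parts/jumps of $\mu$ directly by a short case analysis at each differentiability point $s_0$ of $\ush$: if $(\ush)'(s_0)=0$ then $(\wsh-\ush)'(s_0)=(\wsh)'(s_0)<0$ trivially; if $(\ush)'(s_0)<0$ then $s_0=\mu(\ush(s_0))$, $\mu$ is differentiable there, and the inequality $(\wsh)'(\mu(t))\le(\ush)'(\mu(t))$ (coming from $(\wsh\circ\mu)'\ge 1=(\ush\circ\mu)'$ and $\mu'<0$) applies. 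This is cleaner than the alternative route you sketch.
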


In order to establish Theorem \ref{thm:talenti_rigidity}, we first prove a preliminary lemma which will also be useful in \cref{subsec:alm_rig}: in the same setting of the Talenti-type Theorem, the difference $w-\ust$ is non-increasing.

\newcommand{\sz}{s_0}
\begin{lemma}
\label{lem:monoton_w-u}
Let $\Xdm$, $\Omega$, $f$, $\Eform$, $u$ and $w$ be as in \cref{theorem:talenti}. 
Then the map $x\mapsto w(x)-\ust(x)$ is non-increasing on $[0,r_v]$.
\end{lemma}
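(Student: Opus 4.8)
We may assume $f\not\equiv 0$, since otherwise $u\equiv 0$, $w\equiv 0$, $\ust\equiv 0$ and there is nothing to prove. Set $v\doteq\meas(\Omega)$, let $f^\sharp$ be the decreasing rearrangement of $f$ and $F(\sigma)\doteq\int_0^\sigma f^\sharp$. Applying the representation formula \eqref{eq:repr_form_2} with datum $f^\star/\alpha$ and using $f^\star\circ\HKN^{-1}=f^\sharp$ together with $\HKN(r_v)=v$, one gets $w=\psi\circ\HKN$ on $[0,r_v]$, where
\[
\psi(s)\doteq\frac1\alpha\int_s^v\frac{F(\sigma)}{\isoKN(\sigma)^2}\,\de\sigma ,\qquad s\in[0,v].
\]
Since also $\ust=u^\sharp\circ\HKN$ by definition and $\HKN$ is increasing on $[0,r_v]$, it is enough to prove that $\Phi\doteq\psi-u^\sharp$ is non‑increasing on $[0,v]$. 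Observe that $\psi$ is non‑increasing and continuous (with $\psi(0)=w(0)<\infty$ by \cref{prop:sol_mod_space}).

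\textbf{Two consequences of the Talenti estimate.} Recall from the proof of \cref{theorem:talenti} the pointwise inequality \eqref{eq:main_thm1}: for a.e.\ $t\in(0,M)$, with $M\doteq\ess\sup_{\Omega}|u|$ and $\mu=\mu_u$ the distribution function of $u$,
\[
1\le -\frac{\mu'(t)}{\alpha\,\isoKN(\mu(t))^2}\,F(\mu(t)).
\]
\emph{(i)} If $\mu$ were constant, equal to some $c>0$, on a nondegenerate interval, then $\mu'$ (and, since the superlevel sets $\{|u|>t\}$ would be $\meas$‑equivalent there, also the middle term in the chain \eqref{eq:main_thm0}) would vanish on that interval, forcing $\isoKN(c)^2\le 0$ via \eqref{eq:main_thm0}; this is impossible because $\isoKN>0$ on $(0,1)$ and $c\le v<1$. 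Hence $\mu$ is strictly decreasing on $\{\mu>0\}$, so $u^\sharp$ has no downward jump on $(0,v)$; being left‑continuous and non‑increasing, $u^\sharp$—and therefore $\Phi$—is continuous on $(0,v)$. \emph{(ii)} Integrating the displayed inequality over any interval $[a,b]\subset[0,M]$ and pushing forward by the monotone map $\mu$ (whose multiplicity is $\le 1$) gives
\[
b-a\le\frac1\alpha\int_{\mu(b)}^{\mu(a)}\frac{F(\sigma)}{\isoKN(\sigma)^2}\,\de\sigma .
\]

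\textbf{Conclusion.} Call $s\in(0,v)$ \emph{admissible} if $\mu(u^\sharp(s))=s$; one always has $\mu(u^\sharp(s))\le s$, and equality holds whenever $\mu$ is continuous at $u^\sharp(s)$, and also at every left endpoint of a maximal interval on which $u^\sharp$ is constant (here the strict monotonicity of $\mu$ from (i) is used). For admissible $s_1<s_2$, applying (ii) with $a=u^\sharp(s_2)$ and $b=u^\sharp(s_1)$ (so $a\le b\le M$) yields
\[
u^\sharp(s_1)-u^\sharp(s_2)\le\frac1\alpha\int_{s_1}^{s_2}\frac{F(\sigma)}{\isoKN(\sigma)^2}\,\de\sigma=\psi(s_1)-\psi(s_2),
\]
that is, $\Phi(s_1)\ge\Phi(s_2)$. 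The non‑admissible points lie inside the at most countably many maximal constancy intervals of $u^\sharp$, on each of which $\Phi=\psi-\mathrm{const}$ is non‑increasing; combining this with the inequality just obtained on admissible points and the continuity of $\Phi$ from (i), one deduces that $\Phi$ is non‑increasing on $(0,v)$, hence on all of $[0,v]$ (using that $w$ and $\ust$ are continuous up to $r_v$ with $w(r_v)=\ust(r_v)=0$). Transporting back through $\HKN$ gives the claim.

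\textbf{Main obstacle.} The delicate part is entirely measure‑theoretic: keeping track of the generalized inverse $u^\sharp$ of the distribution function $\mu$. In particular, step (i)—continuity of $u^\sharp$—genuinely relies on the Lévy–Gromov inequality (\cref{prop:levy_gromov}) and not merely on monotonicity of $\mu$, and one has to argue that the non‑admissible set is small enough for the pointwise inequality on admissible points, together with continuity, to propagate to global monotonicity. An alternative that streamlines this is to invoke the Pólya–Szegő inequality (\cref{thm:psz}) to know a priori that $\ust\in W^{1,2}(I,\deu,\mKN)$, whence $\Phi$ is absolutely continuous on compact subintervals of $(0,v)$, and then to check $\Phi'\le 0$ a.e.\ directly from \eqref{eq:main_thm1} via the a.e.\ inverse‑function theorem for $\mu$.
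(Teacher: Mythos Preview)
Your proof is correct and reaches the same conclusion, but via a route that differs from the paper's in an interesting way. The paper invokes the P\'olya--Szeg\H{o} inequality (\cref{thm:psz}) to obtain that $u^\star\in W^{1,2}_0([0,r_v),\deu,\mKN)$, hence $u^\sharp$ is locally absolutely continuous; it then differentiates the identity $u^\sharp\circ\mu=\mathrm{id}$ and the inequality \eqref{eq:main_thm1} to get $(\wsh-\ush)'\le 0$ almost everywhere, which suffices by absolute continuity. Your argument, by contrast, avoids P\'olya--Szeg\H{o} entirely: you extract continuity of $u^\sharp$ directly from the strict monotonicity of $\mu$ (which you obtain from the L\'evy--Gromov term in \eqref{eq:main_thm0}), and then work with the \emph{integrated} form of \eqref{eq:main_thm1} on admissible pairs rather than with pointwise derivatives. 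This makes your proof more self-contained---it uses only ingredients already present in the proof of \cref{theorem:talenti}---at the cost of a somewhat more delicate ``combining'' step. You flag this yourself in the final paragraph, and indeed the passage from monotonicity on admissible points plus monotonicity on each constancy interval plus continuity to global monotonicity deserves a couple more lines (for instance, one checks that admissible points accumulate from the right at every right endpoint $A_j$, since $A_j=B_k$ for some $k\neq j$ is impossible); but it goes through. One small slip: $\psi(0)=w(0)$ need not be finite when $N\ge 4$ and $f^\star$ is merely $L^2$, but this is harmless since the claim at $x=0$ is then trivially satisfied.
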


\begin{proof}[Proof of \cref{lem:monoton_w-u}]
Since $w-\ust = (\wsh - \ush)\circ \HKN$, with $\HKN$ strictly increasing, it is enough to show that $\wsh - \ush$ is non-increasing in $[0,\meas(\Omega)]$.
\\Recall that the function $\wsh \colon [0,\meas (\Omega)]\to\R$ can be expressed as:
	\begin{equation}\notag
	\wsh (s) = \frac{1}{\alpha} \int_s^{\meas (\Omega)} \frac{F(\xi)}{\isoKN^2(\xi)}\,\de\xi
	\end{equation}
where $F(\xi)\doteq \int_0^\xi f^\sharp (s)\, \de s$ as usual.
As a preliminary observation, notice that this explicit representation gives some useful information on the regularity and behavior of $\wsh$ itself: 
indeed, $\wsh$ is a continuously differentiable function on $(0,\meas(\Omega))$, and it is strictly decreasing in $[0,r_v]$ (since $f\not\equiv 0$).
Moreover, as a consequence of the \PSz{} Theorem, $\ust$ belongs to $\W_0([0,r_v),\deu,\mKN)$ and it is thus locally absolutely continuous in the interior $(0,r_v)$; the same conclusion thus holds for $\ush$. Hence the result is proved if we can show that 
\begin{equation}\label{eq:w-u-derivative}
\text{$(\wsh-\ush)'\leq 0$ almost everywhere in $(0,\meas(\Omega))$.}
\end{equation}

By the continuity of $\ush$ and by the definition of symmetrization, we have that $\ush (\mu(t)) = t$ for all $t\in(0,M)$ (\ie{} $\mu$ is the right inverse of $\ush$), where $M\doteq \sup u$. In particular, $(\ush\circ \mu)'\equiv 1$ in $(0,M)$. On the other hand, $(\wsh \circ \mu)'\geq 1$ a.e.\ in $(0,M)$ by \cref{eq:main_thm1}. Hence,
	\begin{equation}\notag
	\bkts*{(\wsh)'\circ \mu - (\ush)'\circ \mu} \mu' \geq 0 \qquad \text{a.e.\ in $(0,M)$}.
	\end{equation}
Moreover, $\mu'$ is strictly negative a.e.\ in $(0,M)$, again by the fact that $\bkts{(\ush)'\circ \mu} \mu' = 1$ almost everywhere. This shows that in fact
	\begin{equation} \label{eq:comp_deriv}
	\pths*{\wsh}'(\mu (t))\leq \pths*{\ush}'(\mu (t)) 	\qquad \text{for a.e.\ $t\in(0,M)$}.
	\end{equation}
To be more precise, 	\eqref{eq:comp_deriv} holds at all points $t$  such that  $\mu$ is differentiable at $t$ and  $\ush$ is differentiable at   $\mu(t)$.  
\\Let $\sz\in (0,\meas(\Omega))$ be a point where $\ush$ is differentiable. Since $\ush$ is monotone non-increasing we have that either $(\ush)'(\sz)=0$ or $(\ush)'(\sz)<0$.
\\ If $(\ush)'(\sz)=0$, then  $(\wsh-\ush)'(\sz)= (\wsh)'(\sz)<0$ so the inequality \eqref{eq:w-u-derivative} is proved.
\\If instead $(\ush)'(\sz)<0$ then,  by monotonicity,  $\ush(s)>\ush(\sz)$ for any $s<\sz$. In particular $\sz = \mu(\ush(\sz))$. It is also easily seen that in this case $\mu$ is differentiable at $\ush(\sz)$.  By \cref{eq:comp_deriv}, we conclude that $(\wsh-\ush)'(\sz)\leq 0$ also in this case.
\\The proof of \eqref{eq:w-u-derivative} is thus complete.
\end{proof}

\begin{proof}[Proof of \cref{thm:talenti_rigidity}]
The first statement ($\ust = w$ in $[\bx, r_v]$) is a direct consequence of the monotonicity of $w-\ust$ (\cref{lem:monoton_w-u}), of the assumption $w(\bx)=\ust(\bx)$ and of the Talenti inequality $w-\ust\geq 0$ in $[0,r_v]$.

This also implies that $\mu (t) = \nu(t)$ for any $t\in (0, \ust(\bx))$, where $\nu$ is the distribution function of $w$. Hence, for any such $t$, equality holds in \cref{eq:main_thm0}. In particular,  the super-level set $\brcs*{\abs{u}>t}$ satisfies $\mcI_{N-1,N}(\meas\pths*{\brcs*{\abs{u}>t}})=\Per\pths*{\brcs*{\abs{u}>t}}$. By the rigidity in the Lévy-Gromov inequality, this implies that $\Xdm$ is a spherical suspension.

Assume now $\bx=0$ (thus $u^\star = w$ in $[0,r_v]$), $u\in\Lip(\Omega)$ and $\mwug{u}\neq 0$ $\meas$-almost everywhere in $\spt (u)$. Putting together the gradient comparison inequality \eqref{eq:GradComp} (with $q=2$) and the \PSz{} inequality (\cref{eq:psz}), we find
	\begin{equation*}
	\int_0^{r_1}\abs{\nabla u^\star}^2 \dmNN \leq \int_\Omega \abs{\nabla u}^2 \dmeas	\leq \int_0^{r_1}\abs{\nabla w}^2 \dmNN.
	\end{equation*}
The equality assumption, however, implies that the first and the last expressions coincide: thus, equality in the \PSz{} inequality is achieved. By rigidity in tge \PSz\ inequality, then, $u$ is radial.
\end{proof}
\let\bx\relax\let\bs\relax
\let\ust\relax\let\ush\relax\let\wsh\relax

\subsection{Stability}
\label{subsec:alm_rig}

In this Section, we will prove a stable version of the rigidity result (\cref{thm:talenti_rigidity}); we only consider the case where $\Eform = \Ch$, so that $\Eop$ is the Laplacian. We first need to recall some results on the convergence of \mms{}s and of functions defined therein.

\begin{assumption}\label{assumption:pmgh_conv} From now on, the following assumptions will be made:

\emph{Spaces:} 	$\brcs*{\XX_i}_{i\in\N}=\brcs{\Xdxmi}_{i\in \N}$ 
				and $\XX=\Xdxm$ 
will be pointed \mms{}s satisfying the $\RCD(N-1,N)$ condition for some $N\geq 2$, with $\meas_i(\Xmms_i)=1$, $\meas (\Xmms)=1$. 

\emph{Convergence of spaces:}  we will assume that  $\XX_i$ converge in the $\pmGH$ sense to $\XX$; by \cite[Section 3.5]{GigliMondinoSavare},  $\pmGH$ convergence coincides in our setting with $\pmG$ convergence; thus we can assume that the following conditions hold:
	\begin{enumerate}
	\item[(GH1)]\label{condition:GH1} $\Xmms_i$ and $\Xmms$ are all contained in a common metric space $(\Ymms,\dist)$, with $\dist_i=\restr{\dist}{\Xmms_i\times\Xmms_i}$, and $x_i \to x$;
	\item[(GH2)]\label{condition:GH2} $\spt \meas_i=\Xmms_i$ and $\spt \meas = \Xmms$;
	\item[(GH3)]\label{condition:GH3} The measures $\meas_i$ narrowly converge to $\meas$:
		\begin{equation*}
		\lim_{i\to \infty}\int_{\Ymms} \phi \dmeas_i = \int_{\Ymms} \phi \dmeas \quad \text{for all $\phi \in \Cb(\Ymms)$}, 
		\end{equation*}
		where $\Cb(\Ymms)$ is the space of continuous and bounded functions on $(\Ymms, \dist)$.
	\end{enumerate}
\end{assumption}

\begin{remark}[Compactness and stability of $\RCD(K,N)$ sequences]\label{remark:comp_RCD} Fix $K\in \R$ and $N\in (1,\infty)$. Every sequence $\brcs*{\XX_i}_{i\in\N}=\brcs{\Xdxmi}_{i\in \N}$ of pointed $\RCD(K,N)$ spaces admits a subsequence which converges in the $\pmGH$ sense to a pointed \mms{} $\XX$, and $\XX$ itself satisfies an $\RCD(K,N)$ condition. 
Indeed:
	\begin{itemize}
	\item relative compactness follows (as in the classical Gromov's precompactness Theorem) from \cite[Proposition 5.2]{Gromov2007} and the Bishop-Gromov inequality (see \cite[Theorem 2.3]{Sturm2006b} and \cite[Section 5.4]{LottVillani}); 
	\item the fact that the class of $\RCD(K,N)$ spaces is stable under $\mGH$ convergence follows from stability of the $\CD(K,N)$ class (see \cite[Section 5.3]{LottVillani}), from the stability of the $\RCD(K,\infty)$ class under $\pmG$ convergence (see \cite[Theorem 7.2]{GigliMondinoSavare}) and from the equivalence of $\pmG$ and $\pmGH$ convergence for $\RCD(K,N)$ spaces \cite[Section 3.5]{GigliMondinoSavare}. 
	\end{itemize}		
\end{remark}
	
\begin{remark}[$L^2$ functions]\label{remark:L2functions}
Assume that $\ball{x_i}{R_i}$ and $\ball{x}{R}$ are metric balls in $\Xmms_i$ and $\Xmms$ respectively.
Let $f_i \in L^2 \pths*{\ball{x_i}{R_i},\meas_i}$ and $f \in L^2 \pths*{\ball{x}{R},\meas}$ be $L^2$ functions on such balls; by extending such functions to be $0$ out of the balls on which they are defined, we can equivalently assume  $f_i \in L^2 \pths*{\Xmms_i,\meas_i}$ and $f \in L^2 \pths*{\Xmms,\meas}$; by the assumption that the spaces $\Xmms_i$ and $\Xmms$ are contained in $\Ymms$, up to a further extension we actually have $f_i \in L^2 \pths*{\Ymms,\meas_i}$ and $f \in L^2 \pths*{\Ymms,\meas}$. 
\end{remark}

\begin{definition}[Convergence of $L^2$ functions]
Let $f_i \in L^2 \pths*{\ball{x_i}{R_i},\meas_i}$ and $f \in L^2 \pths*{\ball{x}{R},\meas}$ as in \cref{remark:L2functions}. Following \cite[Definition 6.1]{GigliMondinoSavare}, we say that:
	\begin{enumerate}[(a)]
	\item $f_i \rightharpoonup f$ in the \emph{weak} $L^2$ sense if 
		\begin{gather*}
		\lim_{i\to \infty}\int_{\Ymms} \phi f_i \dmeas_i = \int_{\Ymms} \phi f \dmeas \quad \text{for all $\phi \in \Cb(\Ymms)$} \\
		\sup_i \norm*{f_i}_{L^2 \pths*{\ball{x_i}{R_i},\meas_i}} <\infty.
		\end{gather*}
	\item $f_i \rightarrow f$ in the \emph{strong} $L^2$ sense if, in addition,
		\begin{align*}
		\lim_{i\to \infty} \norm*{f_i}_{L^2 \pths*{\ball{x_i}{R_i},\meas_i}} &= \norm*{f}_{L^2 \pths*{\ball{x}{R},\meas}}.
		\end{align*}
	\end{enumerate}
\end{definition}

In order to obtain the stability result, we establish a series of auxiliary lemmas of independent interest.  
We start by showing that $L^2$-strong convergence of maps implies the pointwise convergence of the distribution functions to the distribution function of the limit. 

\begin{lemma}[Convergence of distribution functions]\label{lem:converg_of_distr}
Let $\XX_i\overset{{\rm pmGH}}{\longrightarrow}\XX$ be \pmms{}s satisfying \cref{assumption:pmgh_conv}. Let $\ball{x_i}{R_i}$ and $\ball{x}{R}$ be metric balls in $\Xmms_i$ and $\Xmms$ respectively, and let $f_i \in L^2 \pths*{\ball{x_i}{R_i},\meas_i}$ and $f \in L^2 \pths*{\ball{x}{R},\meas}$. Assume $\mu_i\doteq \mu_{f_i}$ and $\mu\doteq \mu_f$ are the distribution functions of $f_i$ and $f$ respectively. If $f_i \to f$ $L^2$-strongly, then $\mu_i(t)$ converges to $\mu(t)$ for every $t\in (0,+\infty)\setminus C$, where $C$ is a countable set.
\end{lemma}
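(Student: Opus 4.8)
The plan is to recast the statement as the narrow convergence of the laws of $\abs{f_i}$ and $\abs{f}$ on $[0,+\infty)$, and then read off the convergence of the distribution functions at continuity points via the portmanteau theorem.

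First I would extend $f_i$ and $f$ by zero outside the balls on which they are defined (as in \cref{remark:L2functions}), so that $f_i\in L^2(\Ymms,\meas_i)$, $f\in L^2(\Ymms,\meas)$, and introduce the push-forward measures $\nu_i\doteq(\abs{f_i})_\sharp\meas_i$ and $\nu\doteq(\abs{f})_\sharp\meas$; these are Borel probability measures on $[0,+\infty)$ since $\meas_i(\Xmms_i)=\meas(\Xmms)=1$. For every $t>0$ the superlevel sets $\brcs{\abs{f_i}>t}$ and $\brcs{\abs{f}>t}$ are contained in the respective balls, so $\mu_i(t)=\nu_i((t,+\infty))$ and $\mu(t)=\nu((t,+\infty))$. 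Hence it suffices to prove $\nu_i\rightharpoonup\nu$ narrowly: the portmanteau theorem then gives $\nu_i(A)\to\nu(A)$ for every Borel set $A$ with $\nu(\partial A)=0$, and applying this to $A=(t,+\infty)$ yields $\mu_i(t)\to\mu(t)$ for all $t$ outside the set $C\doteq\brcs*{t\in(0,+\infty)\stset\nu(\brcs{t})>0}$, which is countable because $\nu$ is finite.

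To prove $\nu_i\rightharpoonup\nu$ it is enough — since narrow convergence of finite Borel measures on a metric space is determined by bounded Lipschitz test functions — to check that $\int\psi(\abs{f_i})\,\de\meas_i\to\int\psi(\abs{f})\,\de\meas$ for every bounded Lipschitz $\psi\colon[0,+\infty)\to\R$. Fix $\epsilon>0$ and, using density of $\Cb(\Ymms)$ in $L^2(\Ymms,\meas)$, pick $g\in\Cb(\Ymms)$ with $\norm{f-g}_{L^2(\meas)}\le\epsilon$. Expanding the square and using the very definition of $L^2$-strong convergence together with (GH3) — namely $\norm{f_i}_{L^2(\meas_i)}^2\to\norm{f}_{L^2(\meas)}^2$, $\int g f_i\,\de\meas_i\to\int g f\,\de\meas$ (weak $L^2$-convergence, $g\in\Cb$), and $\int g^2\,\de\meas_i\to\int g^2\,\de\meas$ ($g^2\in\Cb$, narrow convergence of $\meas_i$) — one obtains
\begin{equation*}
\norm{f_i-g}_{L^2(\meas_i)}^2=\norm{f_i}_{L^2(\meas_i)}^2-2\int g f_i\,\de\meas_i+\int g^2\,\de\meas_i\ \xrightarrow{\ i\to\infty\ }\ \norm{f-g}_{L^2(\meas)}^2\le\epsilon^2.
\end{equation*}
Writing $\int\psi(\abs{f_i})\,\de\meas_i-\int\psi(\abs{f})\,\de\meas$ as the sum of $\int\psi(\abs{f_i})\,\de\meas_i-\int\psi(\abs{g})\,\de\meas_i$, of $\int\psi(\abs{g})\,\de\meas_i-\int\psi(\abs{g})\,\de\meas$, and of $\int\psi(\abs{g})\,\de\meas-\int\psi(\abs{f})\,\de\meas$, the first summand is bounded by $\Lip(\psi)\norm{f_i-g}_{L^1(\meas_i)}\le\Lip(\psi)\norm{f_i-g}_{L^2(\meas_i)}$ (as $\meas_i$ is a probability measure), hence has $\limsup_i$ at most $\Lip(\psi)\epsilon$; the third summand is $\le\Lip(\psi)\epsilon$ likewise; and the middle summand tends to $0$ because $\psi(\abs{g})\in\Cb(\Ymms)$ and $\meas_i\rightharpoonup\meas$. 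Letting $i\to\infty$ and then $\epsilon\to0$ gives the claim.

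The only real content is this middle step — that $L^2$-strong convergence is stable under composition with a bounded Lipschitz nonlinearity, equivalently that the family $\brcs{f_i}$ can be uniformly $L^2$-approximated by a single continuous function; the reduction to push-forwards and the portmanteau argument are routine. The one point to keep in mind is that $t=0$ must be excluded (mass concentrated on the zero set of $f$ may be created or destroyed in the limit), which is precisely why the statement is phrased for $t\in(0,+\infty)\setminus C$.
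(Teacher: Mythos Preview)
Your argument is correct and follows the same high-level strategy as the paper: reduce the claim to narrow convergence of suitable push-forward measures, then invoke portmanteau. The paper pushes forward the graph map $\ii\times|f_i|$ to $\Ymms\times\R$ and cites an external result (\cite[Theorem 5.4.4]{Ambrosio2014b}) to obtain $\int\phi(x,|f_i(x)|)\,\de\meas_i\to\int\phi(x,|f(x)|)\,\de\meas$ for $\phi\in\Cb(\Ymms\times\R)$, whereas you project only to $[0,+\infty)$ via $|f_i|$ and prove the needed narrow convergence by hand, using the $\Cb$-approximation of $f$ to show that $L^2$-strong convergence is stable under composition with a bounded Lipschitz nonlinearity. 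Your route is slightly more elementary and self-contained; the paper's has the advantage of isolating the nontrivial step as a quotable statement about graph push-forwards (which is strictly more than you need here). One cosmetic point: your closing remark about excluding $t=0$ is a bit misplaced --- after the zero-extension, $\nu(\{0\})=\meas(\{f=0\})\geq 1-v>0$ always, so $0$ is simply one of the (at most countably many) atoms handled by the portmanteau exclusion, not a separate phenomenon.
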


\begin{proof}
Let us fix $t\in(0,+\infty)$. We need to show that (except for a countable number of such $t$)
	\begin{equation}\label{eq:convsuplev}
		\limi \meas_i \pths*{\brcs*{{|f_i|}>t}}	= \meas \pths*{\brcs*{{|f|}>t}}.
	\end{equation}
Notice that 
	\begin{align*}
	\brcs*{x\in \Xmms_i \stset |f_i(x)|>t}&= \brcs*{x\in \Xmms_i \stset (x,|f_i(x)|)\in \Ymms\times (t,+\infty)}\\
	\brcs*{x\in \Xmms \stset |f(x)|>t}&= \brcs*{x\in \Xmms \stset (x,|f(x)|)\in \Ymms\times (t,+\infty)}.
	\end{align*}
Given a map $g:\Ymms \to \R$, we denote by $\ii \times g : \Ymms \to \Ymms \times \R$ the map 
	$
	\ii \times g(x)\doteq (x, g(x))
	$;
by the argument above, it holds that 
	\begin{align*}
	\brcs*{x\in \Xmms_i \stset |f_i(x)|>t}&= (\ii \times |f_i|)^{-1}\pths*{\Ymms \times (t,+\infty)}\\
	\brcs*{x\in \Xmms \stset |f(x)|>t}&= (\ii \times |f|)^{-1}\pths*{\Ymms \times (t,+\infty)}.
	\end{align*}
Define $\nu_i$ and $\nu$ to be the following push-forward measures on $\Ymms \times \R$ 
	\begin{equation*}
	\nu_i\doteq (\ii\times \abs{f_i})_\sharp \meas_i, \quad  \nu\doteq (\ii\times \abs{f})_\sharp \meas.
	\end{equation*}
Our goal (\cref{eq:convsuplev}) is equivalent to show that
	\begin{equation*}
	\limi \nu_i(\Ymms \times (t, +\infty)) = \nu(\Ymms \times (t, +\infty)).
	\end{equation*}
Notice that the topological boundary of $\Ymms \times (t, +\infty)$ is $\Ymms \times \brcs{t}$, which is $\nu$-negligible for all but a countable set of $t>0$ by the finiteness of $\meas$:
	\begin{equation*}
	\nu\pths*{\Ymms \times \brcs{t}}=\meas ((\ii \times |f|)^{-1}\pths*{\Ymms \times \brcs{t}})=\meas(\brcs{|f|=t}).
	\end{equation*}
Thus, it is sufficient to show that the measures $\nu_i$ converge narrowly to $\nu$ in $\Ymms\times \R$. To this aim, notice that  for every $\phi \in \Cb(\Ymms{ \times \R})$, one has
	$$
	\int_{\Ymms\times \R} \phi(x,s)\, \de \nu_i =\int_{\Ymms} \phi(x, |f_i(x)|)\,  \dmeas_i, \quad \int_{\Ymms\times \R} \phi(x,s)\, \de \nu =\int_{\Ymms} \phi(x, |f(x)|)\,  \dmeas.
	$$
Arguing as in  \cite[Theorem 5.4.4]{Ambrosio2014b} (see also \cite[Equation (6.6)]{GigliMondinoSavare}), one can show that the  items in the left converge to the one in the right. This proves the statement.
\end{proof}

In \cite[Theorem 4.2]{AmbrosioHonda}, a variant of the following \namecref{prop:comp_loc_sob} was established. The proof contained therein can be straightforwardly adapted to the present case. 

\begin{proposition}[Compactness of local Sobolev functions]\label{prop:comp_loc_sob}
Let  $\XX_i\overset{{\rm pmGH}}{\longrightarrow}\XX$ be \pmms{}s satisfying \cref{assumption:pmgh_conv}.
Let $R_i\to R$ be a convergent sequence of radii with $R_i, R >0$. 
Let $f_i \in \W (\ball{x_i}{R_i}, \dist, \meas_i)$ have bounded $\W$-norm: $\sup_i \norm*{f_i}_{\W}<+\infty$. 
Then  
there exists a function $f\in\W(\ball{x}{R},\dist, \meas)$ such that  $\brcs{f_i}_{i\in \N}$  converges $L^2$-strongly to $f$, up to a subsequence. 
\end{proposition}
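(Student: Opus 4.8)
The plan is to follow the argument of \cite[Theorem 4.2]{AmbrosioHonda}, which localises the problem by means of cutoff functions and then invokes the Rellich-type compactness available for $\W$-bounded sequences over $\pmGH$-converging $\RCD(N-1,N)$ spaces. Concretely: for $\rho\in(0,R)$ I fix $\rho'=\rho'(\rho)\in(\rho,R)$ and a cutoff $\eta_\rho\colon\Ymms\to[0,1]$ equal to $1$ on $\ball{x}{\rho}$, supported in $\ball{x}{\rho'}$, with $\Lip(\eta_\rho)\le 2/(\rho'-\rho)$; I let $\eta_\rho^i$ be given by the same formula with $\dist(\cdot,x_i)$ in place of $\dist(\cdot,x)$, so that $\eta_\rho^i\to\eta_\rho$ locally uniformly as $i\to\infty$. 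Since $R_i\to R$, one has $\spt\eta_\rho^i\subset\ball{x_i}{R_i}$ for $i$ large, so by \cref{def:loc_sob_spaces} and the Leibniz rule $\eta_\rho^i f_i\in\W(\Xmms_i,\dist,\meas_i)$, with $\mwug{\eta_\rho^i f_i}\le\eta_\rho^i\mwug{f_i}+\abs{f_i}\mwug{\eta_\rho^i}$, and hence $\sup_i\norm*{\eta_\rho^i f_i}_{\W(\Xmms_i)}\le C(\rho)\sup_i\norm*{f_i}_{\W(\ball{x_i}{R_i})}<\infty$.

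Next, for each fixed $\rho$ I would apply the compactness theorem for $\W$-bounded sequences over $\pmGH$-converging $\RCD(N-1,N)$ spaces — this is precisely where $K=N-1>0$ enters, through uniform compactness, doubling and Poincaré (see \cref{remark:comp_RCD} and \cite{GigliMondinoSavare,AmbrosioHonda}): up to a subsequence $\eta_\rho^i f_i\to g_\rho$ strongly in $L^2$, with $g_\rho\in\W(\Xmms,\dist,\meas)$ supported in $\overline{\ball{x}{\rho'}}$, $\eta_\rho^i f_i\rightharpoonup g_\rho$ weakly in $\W$, and with lower semicontinuity of the (localised) Cheeger energy along the convergence. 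I then pick a sequence $\rho_k\uparrow R$ with $\rho_k'<\rho_{k+1}$ and extract, by a diagonal argument, a single subsequence (not relabelled) for which all of the above hold simultaneously. For $l>k$ one has $\eta_{\rho_l}^i\equiv 1$ on $\spt\eta_{\rho_k}^i$, hence the identity $\eta_{\rho_k}^i f_i=\eta_{\rho_k}^i(\eta_{\rho_l}^i f_i)$ on $\Xmms_i$; passing to the limit, $g_{\rho_k}=\eta_{\rho_k}g_{\rho_l}$. In particular $\abs{g_{\rho_k}}\le\abs{g_{\rho_l}}$ and $g_{\rho_k}=g_{\rho_l}$ on $\ball{x}{\rho_k}$, and since $\sup_k\norm*{g_{\rho_k}}_{L^2(\Xmms,\meas)}\le\sup_i\norm*{f_i}_{L^2(\ball{x_i}{R_i},\meas_i)}<\infty$, the sequence $g_{\rho_k}$ converges in $L^2(\Xmms,\meas)$ to some $f$; letting $l\to\infty$ in $g_{\rho_k}=\eta_{\rho_k}g_{\rho_l}$ gives $g_{\rho_k}=\eta_{\rho_k}f$, so $f=g_{\rho_k}$ on $\ball{x}{\rho_k}$, $f$ is well defined on $\ball{x}{R}=\bigcup_k\ball{x}{\rho_k}$, and $\norm*{f}_{L^2(\ball{x}{R},\meas)}\le\sup_i\norm*{f_i}_{L^2(\ball{x_i}{R_i},\meas_i)}$.

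To see that $f\in\W(\ball{x}{R},\dist,\meas)$: the first condition of \cref{def:loc_sob_spaces} is immediate, because any $\phi\in\Lip_c(\Xmms,\dist)$ with $\spt\phi\subset\ball{x}{R}$ has $\spt\phi\subset\ball{x}{\rho_k}$ for some $k$, whence $\phi f=\phi g_{\rho_k}\in\W(\Xmms)$ by the Leibniz rule. For the second, locality of the minimal weak upper gradient gives $\mwug{f}=\mwug{g_{\rho_{k+1}}}$ $\meas$-a.e.\ on $\ball{x}{\rho_k}$; since $\eta_{\rho_{k+1}}^i\equiv 1$ on $\ball{x_i}{\rho_k}$, so that $\mwug{\eta_{\rho_{k+1}}^i f_i}=\mwug{f_i}$ there, the localised lower semicontinuity of the Cheeger energy on the fixed open ball $\ball{x}{\rho_k}$ (metric spheres being $\meas$-negligible by Bishop--Gromov) yields
\[
\int_{\ball{x}{\rho_k}}\mwug{f}^2\dmeas\le\liminf_{i\to\infty}\int_{\ball{x_i}{\rho_k}}\mwug{f_i}^2\dmeas_i\le\Big(\sup_i\norm*{f_i}_{\W(\ball{x_i}{R_i})}\Big)^2,
\]
and letting $k\to\infty$, by monotone convergence, shows $\mwug{f}\in L^2(\ball{x}{R},\meas)$.

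It remains to upgrade to $L^2$-strong convergence $f_i\to f$. Passing to a further subsequence, $f_i\rightharpoonup\tilde f$ weakly in $L^2$; $\tilde f$ is supported in $\overline{\ball{x}{R}}$, whose boundary is $\meas$-negligible by the Bishop--Gromov inequality, and writing $f_i=\eta_{\rho_k}^i f_i+(1-\eta_{\rho_k}^i)f_i$ and letting first $i\to\infty$ (so that $\eta_{\rho_k}^i f_i\to g_{\rho_k}$) and then $k\to\infty$ identifies $\tilde f=f$. Strong convergence is then equivalent to the norm convergence $\norm*{f_i}_{L^2(\meas_i)}\to\norm*{f}_{L^2(\meas)}$, and \emph{this is the step that genuinely uses the uniform gradient bound}: one must establish the tightness estimate
\[
\lim_{k\to\infty}\ \limsup_{i\to\infty}\ \norm[\big]{f_i\,\chi_{\ball{x_i}{R_i}\setminus\ball{x_i}{\rho_k}}}_{L^2(\meas_i)}=0,
\]
i.e.\ that no $L^2$-mass of $f_i$ escapes into the thin annuli near $\partial\ball{x}{R}$ — which is obtained from the uniform Rellich--Kondrachov / Sobolev--Poincaré property of $\RCD(N-1,N)$ spaces applied to $f_i$ on those annuli together with $\sup_i\norm*{\mwug{f_i}}_{L^2(\ball{x_i}{R_i})}<\infty$, exactly as in \cite[Theorem 4.2]{AmbrosioHonda}. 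This tightness is the main technical obstacle; granting it, together with $g_{\rho_k}=\eta_{\rho_k}f\to f$ in $L^2(\Xmms,\meas)$ one obtains $\norm*{f_i}_{L^2(\meas_i)}\to\norm*{f}_{L^2(\meas)}$, which completes the proof.
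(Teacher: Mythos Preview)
Your proposal is correct and follows exactly the approach the paper indicates: the paper itself gives no proof, merely stating that ``a variant \ldots\ was established'' in \cite[Theorem 4.2]{AmbrosioHonda} and that ``the proof contained therein can be straightforwardly adapted to the present case''. You have spelled out that adaptation in detail---the cutoff/diagonal scheme, the consistency $g_{\rho_k}=\eta_{\rho_k}f$, the verification of \cref{def:loc_sob_spaces} via locality and lower semicontinuity, and the identification of the tightness step as the crux---which is precisely what \cite{AmbrosioHonda} does; so your argument and the paper's are the same.
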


The next step is to prove that $L^2$-strong convergence of functions with bounded $\W$-norms implies $L^2$-strong convergence of the symmetrizations.

\begin{lemma}\label{lem:conv_of_rearr}
Let $\XX_i, \XX, R_i, R, f_i$ satisfy the assumptions of \cref{prop:comp_loc_sob}, and 
let $f_i$ converge in the strong $L^2$ sense to $f\in \W(\ball{x}{R},\dist, \meas)$. 
Then, 
up to subsequences, the $f_i^\star$ converge to $f^\star$ in the strong $L^2(\JNN,\mNN)$ sense.
\end{lemma}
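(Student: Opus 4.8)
The idea is to pull everything back to the fixed interval $[0,1]$ equipped with Lebesgue measure, and there reduce the statement to pointwise a.e.\ convergence of decreasing rearrangements together with convergence of $L^2$-norms.

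First I would extend $f_i$ and $f$ by zero to $\Xmms_i$ and $\Xmms$, so that $\meas_i(\Xmms_i)=\meas(\Xmms)=1$; this does not affect the symmetrizations, since the superlevel sets $\{|f_i|>t\}$ with $t>0$ are unchanged, and with this normalization $f_i^\star=f_i^\sharp\circ\HNN$ and $f^\star=f^\sharp\circ\HNN$ on all of $\JNN$, where $f_i^\sharp,f^\sharp\colon[0,1]\to[0,\infty]$. Since $\HNN\colon\JNN\to[0,1]$ is a strictly increasing continuous bijection with $(\HNN)_\sharp\mNN=\leb^1\llcorner[0,1]$, the change of variables $s=\HNN(x)$ yields $\norm*{f_i^\star-f^\star}_{L^2(\JNN,\mNN)}=\norm*{f_i^\sharp-f^\sharp}_{L^2([0,1],\leb^1)}$, so it is enough to prove $f_i^\sharp\to f^\sharp$ in $L^2([0,1],\leb^1)$. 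Moreover, by \cref{prop:equimeas} and the definition of strong $L^2$-convergence, $\norm*{f_i^\sharp}_{L^2([0,1])}=\norm*{f_i}_{L^2}\to\norm*{f}_{L^2}=\norm*{f^\sharp}_{L^2([0,1])}$, so the $L^2$-norms already converge; hence it suffices to prove that $f_i^\sharp\to f^\sharp$ $\leb^1$-a.e.\ on $(0,1)$, because a.e.\ convergence together with convergence of the norms upgrades to strong $L^2$-convergence (apply Fatou's lemma to the non-negative functions $2(|f_i^\sharp|^2+|f^\sharp|^2)-|f_i^\sharp-f^\sharp|^2$, whose pointwise lower limit is $4|f^\sharp|^2$).

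For the a.e.\ convergence I would invoke \cref{lem:converg_of_distr}: the distribution functions satisfy $\mu_i(t)=\mu_{f_i}(t)\to\mu(t)=\mu_f(t)$ for every $t$ outside a countable set $C$. Recall that $f_i^\sharp(s)=\inf\{t\ge 0:\mu_i(t)<s\}$ is the generalized inverse of the non-increasing, right-continuous function $\mu_i$. For the upper bound, given $s\in(0,1)$ and $t>f^\sharp(s)$, one picks $t'\in(f^\sharp(s),t]\setminus C$ with $\mu(t')<s$ (possible because $\{\tau:\mu(\tau)<s\}$ meets $[0,t)$ and $\mu$ is non-increasing); then $\mu_i(t')\to\mu(t')<s$ forces $f_i^\sharp(s)\le t'\le t$ for $i$ large, whence $\limsup_i f_i^\sharp(s)\le f^\sharp(s)$ for \emph{every} $s$. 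For the lower bound, whenever $\sup\{t<f^\sharp(s):\mu(t)>s\}=f^\sharp(s)$, picking $t<f^\sharp(s)$, $t\notin C$, with $\mu(t)>s$ and using that $\mu_i$ is non-increasing gives $\mu_i(\tau)\ge\mu_i(t)>s$ for all $\tau<t$ once $i$ is large, i.e.\ $f_i^\sharp(s)\ge t$; letting $t\uparrow f^\sharp(s)$ yields $\liminf_i f_i^\sharp(s)\ge f^\sharp(s)$. Thus the lower bound can fail only when $\mu$ is eventually constant equal to $s$ on a left-neighbourhood of $f^\sharp(s)$; a short case analysis (distinguishing whether that constant stretch reaches $0$ or not), based on the elementary description of $u^\sharp$ in terms of its distribution function, shows that any such $s$ is a point of discontinuity of the monotone function $f^\sharp$, hence ranges over a countable set. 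This inversion step — together with its case analysis — is the only delicate point; the rest is routine. Putting everything together, $f_i^\sharp\to f^\sharp$ $\leb^1$-a.e.\ on $(0,1)$, hence in $L^2([0,1],\leb^1)$, and therefore $f_i^\star\to f^\star$ in $L^2(\JNN,\mNN)$, in fact along the whole sequence, which a fortiori gives the claim up to subsequences.
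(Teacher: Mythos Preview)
Your argument is correct and takes a genuinely different route from the paper. The paper first invokes the \PSz{} inequality to get a uniform $\W$ bound on the $f_i^\star$, applies compactness (\cref{prop:comp_loc_sob}) to extract an $L^2$-strong limit $g$, and then identifies $g=f^\star$ by comparing distribution functions (using \cref{lem:converg_of_distr}, equimeasurability, and \cref{lem:decr_rearr}). You instead bypass both the \PSz{} bound and the compactness step entirely: you reduce to showing $f_i^\sharp\to f^\sharp$ in $L^2([0,1])$, get the norm convergence for free from equimeasurability and the assumed strong $L^2$ convergence of the $f_i$, and then establish pointwise a.e.\ convergence of the rearrangements directly from the pointwise convergence of the distribution functions via the generalized-inverse argument. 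The Fatou/Brezis--Lieb step then upgrades this to strong $L^2$ convergence. Your approach is more elementary in that it never touches Sobolev bounds, and strictly stronger: it uses only the $L^2$-strong convergence of $f_i$ (not the uniform $\W$ bound in the hypotheses of \cref{prop:comp_loc_sob}), and it yields convergence along the \emph{entire} sequence rather than merely up to subsequences. The paper's approach, on the other hand, packages the identification of the limit neatly via already-proved lemmas without the generalized-inverse case analysis you carry out.
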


\begin{proof}
By \cref{prop:equimeas} and the \PSz{} inequality \eqref{eq:psz}, the $\W \pths*{\JNN,\deu,\mNN}$ norms of the functions $f_i^\star$ are bounded by $C\doteq \sup_i\norm*{f_i}_{\W \pths*{\ball{x_i}{R_i},\dist, \meas_i}}<\infty$, which implies that the $f_i^\star$ also converge (up to subsequences) to a function $g$ in the strong $L^2(\JNN,\mNN)$ sense. It remains to prove that $f^\star=g$ (at least $\mNN$-almost everywhere).

By \cref{lem:converg_of_distr}, the distribution functions $\mu_{f_i}$ converge pointwise to $\mu_f$ out of a countable set; similarly, $\mu_{f_i^\star}$ converge to $\mu_g$ out of a countable set. By equi-measurability of $f_i$ and $f_i^\star$, however, we have that $\mu_{f_i}=\mu_{f_i^\star}$, thus $\mu_f=\mu_g$ out of a countable set.  
Since both $\mu_f$ and $\mu_g$ are non-increasing and continuous, it follows that $\mu_f\equiv \mu_g$  and thus $f^\sharp\equiv g^\sharp$, which in turn implies $f^\star \equiv g^\star$.
Now $g$ was the $L^2$-limit of a sequence of non-increasing functions, thus it is non-increasing itself. By \cref{lem:decr_rearr}, we conclude that $f^\star=g^\star=g$ out of a countable set.
\end{proof}

In view of what we seek to achieve in \cref{lem:limit_sol}, we need the next elementary convergence result, which we shortly prove for the sake of completeness.

\begin{lemma}\label{lem:limit_vol}
Let $\XX_i\overset{{\rm pmGH}}{\longrightarrow}\XX$ be \pmms{}s satisfying \cref{assumption:pmgh_conv}.
Let $R_i \to R$ be such that $\meas_i \pths*{\ball{x_i}{R_i}} = v \in (0,1)$ for all $i\in \N$. Then $\meas (\ball{x}{R}) = v$.
\end{lemma}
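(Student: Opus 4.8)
\emph{Plan.} The plan is to realise the convergence inside a common ambient space and then trap the ball $\ball{x_i}{R_i}$ between concentric balls centred at the limit point $x$. By (GH1)--(GH3) of \cref{assumption:pmgh_conv} we may view all the $\meas_i$ and $\meas$ as Borel probability measures on the common space $(\Ymms,\dist)$, with $\meas_i\rightharpoonup\meas$ narrowly and $x_i\to x$; moreover, since $K=N-1>0$, each $\Xmms_i$ and $\Xmms$ is compact by the Bonnet--Myers theorem, so no mass is lost in the limit. A first observation is that $R>0$: by the Bishop--Gromov inequality (uniform over $\RCD(N-1,N)$ spaces) together with \cref{lem:asympt} one has $\meas_i\pths*{\ball{x_i}{r}}\le C\,r^N$ for a dimensional constant $C$, so $R_i\to 0$ would force $v=0$, contradicting $v\in(0,1)$.

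Next I would establish the two-sided estimate $\meas\pths*{\ball{x}{R}}\le v\le\meas\pths*{\overline{B}_R(x)}$. Fix $\epsilon>0$. Since $\dist(x_i,x)\to 0$ and $R_i\to R$, for $i$ large enough $\dist(x_i,x)+\abs{R_i-R}<\epsilon$, and the triangle inequality then gives $\ball{x}{R-\epsilon}\subseteq\ball{x_i}{R_i}\subseteq\overline{B}_{R+\epsilon}(x)$. Applying the portmanteau theorem for narrowly convergent probability measures---lower semicontinuity on the open set $\ball{x}{R-\epsilon}$, upper semicontinuity on the closed set $\overline{B}_{R+\epsilon}(x)$---together with $\meas_i\pths*{\ball{x_i}{R_i}}=v$ for all $i$, one gets
\[
\meas\pths*{\ball{x}{R-\epsilon}}\le\liminf_i\meas_i\pths*{\ball{x}{R-\epsilon}}\le v\le\limsup_i\meas_i\pths*{\overline{B}_{R+\epsilon}(x)}\le\meas\pths*{\overline{B}_{R+\epsilon}(x)}.
\]
Letting $\epsilon\downarrow 0$ and using $\bigcup_{\epsilon>0}\ball{x}{R-\epsilon}=\ball{x}{R}$ and $\bigcap_{\epsilon>0}\overline{B}_{R+\epsilon}(x)=\overline{B}_R(x)$ (legitimate since $\meas$ is finite) yields the desired estimate.

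It then remains to close the gap, i.e.\ to show $\meas\pths*{\overline{B}_R(x)}=\meas\pths*{\ball{x}{R}}$, equivalently $\meas\pths*{\brcs{y:\dist(x,y)=R}}=0$; this is the step that genuinely uses the curvature-dimension hypothesis, and it is the main obstacle, since the squeeze alone pins down $\meas\pths*{\ball{x}{R}}$ only up to the mass of the boundary sphere. I would deduce it from the localisation (needle decomposition) of Cavalletti--Mondino applied to the $1$-Lipschitz function $u=\dist(x,\cdot)$: up to a $\meas$-null set, $\meas$ disintegrates into one-dimensional measures, each absolutely continuous with respect to $\mathcal{H}^1$ along a transport ray issuing from $x$, along which $u$ is an isometric (hence injective) parametrisation; thus $\brcs{u=R}$ meets every ray in at most one point and is null for each component measure, so $\meas\pths*{\brcs{u=R}}=0$, and since $R>0$ this gives $\meas\pths*{\partial\ball{x}{R}}=0$. (Alternatively, one may simply invoke the known continuity of $r\mapsto\meas\pths*{\ball{x}{r}}$ on $\RCD(K,N)$ spaces.) Combined with the two-sided estimate, this forces $\meas\pths*{\ball{x}{R}}=v$, as claimed.
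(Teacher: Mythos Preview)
Your proof is correct and follows essentially the same route as the paper's: trap $\ball{x_i}{R_i}$ between concentric balls about $x$, use the portmanteau theorem for narrow convergence, let $\epsilon\downarrow 0$, and close the gap via $\meas(\partial\ball{x}{R})=0$. You supply two details the paper only asserts or leaves implicit: the observation that $R>0$ (from Bishop--Gromov), and a sketch via localisation of why spheres are $\meas$-null, whereas the paper simply cites this last fact as a known property of $\RCD(K,N)$ spaces.
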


\begin{proof}
For any $\epsilon > 0$, the inclusions $\ball{x}{R-\epsilon} \subset \ball{x_i}{R_i} \subset \ball{x}{ R+\epsilon}$ hold for $i$ large enough. Thus, by weak convergence of the measures, we have for any $\epsilon>0$:
	$$
	\meas \pths*{\ball{x}{R-\epsilon}} \leq \liminf \meas_i \pths*{\ball{x_i}{R_i}} = v,\quad  \meas \pths*{\ball{x}{R+\epsilon}} \geq \limsup \meas_i \pths*{\ball{x_i}{R_i}} = v.
	$$
Moreover, the following holds (because the space is length):
	\begin{equation*}
	\bigcup_{\epsilon >0} \ball{x}{R-\epsilon} = \ball{x}{R} \subset \overline{\ball{x}{R}} = \bigcap_{\epsilon >0} \ball{x}{R+\epsilon}.
	\end{equation*}
Combining these two facts, and the fact that $\meas \pths*{\partial \ball{x}{R}}=0$ for every $R>0$ (which is true on $\RCD(K,N)$ spaces), implies the statement.
\end{proof}

The next \namecref{lem:limit_sol} analyses the convergence of solutions to the Poisson problem.

\begin{lemma}\label{lem:limit_sol}
Let $\XX_i\overset{{\rm pmGH}}{\longrightarrow}\XX$  be \pmms{}s satisfying \cref{assumption:pmgh_conv}.
Let $R_i \to R$ be such that $\meas_i \pths*{\ball{x_i}{R_i}} = v \in (0,1)$ for all $i\in \N$. 
Let $f_i \in \W \pths*{\ball{x_i}{R_i},\dist, \meas_i}$ with 
	\begin{equation*}
	\sup_i\norm*{f_i}_{W^{1,2} \pths*{\ball{x_i}{R_i},\dist, \meas_i}}<\infty.
	\end{equation*}
Assume that $u_i\in W^{1,2} \pths*{\ball{x_i}{R_i},\dist, \meas_i} $ are weak solutions to
	\begin{equation*}
	\begin{cases}
	-\Delta u_i = f_i 	& \text{in $\ball{x_i}{R_i}$}\\
	u_i=0				& \text{on $\partial \ball{x_i}{R_i}$}
	\end{cases}
	\end{equation*}
and $w_i \in W^{1,2}(\JNN,\deu,\mNN)$ are weak solutions to
	\begin{equation*}
	\begin{cases}
	-\lapNN w_i = f_i ^\star	& \text{in $[0,r_{v})$}\\
	w_i=0						& \text{at $r_{v}$}
	\end{cases}
	\end{equation*}
where $r_{v}\doteq \HNN^{-1}(v)$.
Then, up to extracting a subsequence:
	\begin{enumerate}[(i)]
	\item $f_i$ converges in $L^2$-strong to a function $f\in L^2(\ball{x}{R})$ with $\meas\pths{\ball{x}{R}} = v$; $f_i^\star$ converges in $L^2$-strong to $f^\star$;
	\item $u_i$ converges in $L^2$-strong to a weak solution $u$ of
			\begin{equation}\label{eq:poiss_i}
			\begin{cases}
			-\Delta u = f 	& \text{in $\ball{x}{R}$}\\
			u=0				& \text{on $\partial \ball{x}{R}$}
			\end{cases}
			\end{equation}
	\item $w_i$ converges in $L^2$-strong to a weak solution $w$ of 
		\begin{equation*}
		\begin{cases}
		-\lapNN w = f ^\star	& \text{in $(0,r_v)$}\\
		w=0						& \text{at $r_v$}.
		\end{cases}
		\end{equation*}
	\end{enumerate}
\end{lemma}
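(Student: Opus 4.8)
The plan is to first obtain uniform Sobolev bounds on $u_i$ and $w_i$, then extract $L^2$-strong limits by compactness, and finally identify these limits as solutions of the expected problems; the substance lies in this last identification. Since here $\Eform=\Ch$ we take $\alpha=1$, so that $w_i$ is exactly the symmetrized solution attached to $u_i$ by \cref{theorem:talenti}.

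\textbf{Uniform bounds and extraction.} By \cref{prop:sol_mod_space} (and the estimates in its proof), $w_i$ is the image of $f_i^\star$ under a bounded linear operator $L^2([0,r_v),\mNN)\to\W([0,r_v),\deu,\mNN)$ depending only on $N$ and $r_v$; since $\norm{f_i^\star}_{L^2(\mNN)}=\norm{f_i}_{L^2(\meas_i)}\le\sup_i\norm{f_i}_{W^{1,2}}<\infty$ by \cref{prop:equimeas}, the norms $\norm{w_i}_{\W([0,r_v),\deu,\mNN)}$ are uniformly bounded. By \cref{theorem:talenti}, $0\le u_i^\star\le w_i$ and $\int\mwug{u_i}^2\,\de\meas_i\le\int_0^{r_v}\abs{w_i'}^2\dmNN$; combined with $\norm{u_i}_{L^2(\meas_i)}=\norm{u_i^\star}_{L^2(\mNN)}\le\norm{w_i}_{L^2(\mNN)}$ this gives $\sup_i\norm{u_i}_{W^{1,2}(\ball{x_i}{R_i},\dist,\meas_i)}<\infty$. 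Then \cref{prop:comp_loc_sob} yields, along a subsequence, $f_i\to f$ and $u_i\to u$ in the $L^2$-strong sense with $f,u\in\W(\ball{x}{R},\dist,\meas)$, while \cref{lem:limit_vol} gives $\meas(\ball{x}{R})=v$, so $f^\star$ and $u^\star$ are again defined on $[0,r_v)$; \cref{lem:conv_of_rearr} then gives $f_i^\star\to f^\star$ in $L^2(\JNN,\mNN)$-strong. Finally, because the model interval $[0,r_v)$ is independent of $i$, $w_i-w$ solves the model problem with datum $f_i^\star-f^\star$ (with $w$ the solution with datum $f^\star$), so boundedness of the solution operator gives $\norm{w_i-w}_{\W([0,r_v),\deu,\mNN)}\le C\norm{f_i^\star-f^\star}_{L^2(\mNN)}\to0$. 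This proves items (i) and (iii) and exhibits $w$ as the weak solution of the stated one-dimensional problem.

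\textbf{Identification of $u$.} It remains to prove $u\in W^{1,2}_0(\ball{x}{R})$ and $\Ch(u,v)=\int_{\ball{x}{R}}fv\dmeas$ for every $v\in W^{1,2}_0(\ball{x}{R})$. The equation is passed to the limit through the Mosco convergence of the Cheeger energies $\Ch_i\to\Ch$ along the $\pmG$-convergence $\XX_i\to\XX$ (valid for $\RCD(K,N)$ spaces, see \cite{GigliMondinoSavare, AmbrosioHonda}): the $L^2$-strong convergence $u_i\to u$ with uniformly bounded energy is ``weak $W^{1,2}$'' convergence, and testing the weak formulation $\Ch_i(u_i,v_i)=\int f_iv_i\,\de\meas_i$ against a ``strong $W^{1,2}$'' recovery sequence $v_i\to v$ gives $\Ch_i(u_i,v_i)\to\Ch(u,v)$ on the left and $\int f_iv_i\,\de\meas_i\to\int fv\dmeas$ on the right (as $f_i\to f$ and $v_i\to v$ are $L^2$-strong). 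What must be added is the Dirichlet constraint. On one side, $u_i=0$ outside $\ball{x_i}{R_i}\subseteq\ball{x}{R+\varepsilon}$ for $i$ large (since $x_i\to x$, $R_i\to R$); testing against $\phi\in\Cb(\Ymms)$ vanishing on $\ball{x}{R+\varepsilon}$ and using $L^2$-weak convergence gives $\int\phi u\dmeas=0$, hence, letting $\varepsilon\downarrow0$ and using $\meas(\partial\ball{x}{R})=0$, $u=0$ $\meas$-a.e.\ outside $\ball{x}{R}$. On the other side, using $\ball{x}{R-\delta}\subseteq\ball{x_i}{R_i}$ for $i$ large, a cutoff near $\partial\ball{x}{R}$ — combined with the density of $\Lip_c(\ball{x}{R})$ in $W^{1,2}_0(\ball{x}{R})$ — both upgrades the a.e.-vanishing of $u$ to $u\in W^{1,2}_0(\ball{x}{R})$ and produces, for each test function $v\in W^{1,2}_0(\ball{x}{R})$, a recovery sequence $v_i$ lying in $W^{1,2}_0(\ball{x_i}{R_i})$. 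Passing to the limit in $\Ch_i(u_i,v_i)=\int f_iv_i\,\de\meas_i$ then completes the proof.

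\textbf{Main obstacle.} The delicate point is exactly this last step: reconciling the Mosco convergence of the \emph{global} Cheeger energies with the \emph{moving} Dirichlet domains $\ball{x_i}{R_i}$. One must show both that the a.e.-vanishing of the limit $u$ outside $\ball{x}{R}$ promotes it to membership in $W^{1,2}_0(\ball{x}{R})$, and, dually, that recovery sequences for test functions can be chosen subordinate to $\ball{x_i}{R_i}$; both rest on quantitative cutoffs near $\partial\ball{x}{R}$ together with $\meas(\partial\ball{x}{R})=0$ on $\RCD(K,N)$ spaces. The rest is a routine combination of the comparison theorems and compactness lemmas already established.
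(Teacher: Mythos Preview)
Your proposal is correct and follows essentially the same strategy as the paper: uniform $W^{1,2}$ bounds on $w_i$ (and then on $u_i$ via the gradient comparison \eqref{eq:GradComp}), extraction of $L^2$-strong limits via \cref{prop:comp_loc_sob}, and identification of the limits as weak solutions by passing to the limit in the weak formulation using Mosco-type convergence of the Cheeger energies.

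Two small points of comparison. For item \textit{(iii)} the paper extracts a limit of $w_i$ by compactness and then identifies it, whereas your observation that the solution operator on the \emph{fixed} interval $[0,r_v)$ is bounded and linear gives convergence of $w_i$ to $w$ directly in $W^{1,2}$ norm --- a cleaner route. For item \textit{(ii)}, the paper dispatches both the recovery sequence $\psi_i\in W^{1,2}_0(\ball{x_i}{R_i})$ and the convergence $\Ch_i(u_i,\psi_i)\to\Ch(u,\psi)$ by direct citation of \cite[Lemma 2.10 and Corollary 4.3]{AmbrosioHonda}, and does not explicitly discuss membership of $u$ in $W^{1,2}_0(\ball{x}{R})$; you instead sketch the cutoff construction and correctly flag the moving-domain issue as the main obstacle. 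Your awareness here is apt: the promotion from ``$u=0$ a.e.\ outside $\ball{x}{R}$'' to ``$u\in W^{1,2}_0(\ball{x}{R})$'' is the one step neither argument spells out in full, and it does require the specific tools of \cite{AmbrosioHonda} (or an equivalent capacity/cutoff argument) rather than being automatic.
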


\begin{proof}
Assertion \textit{(i)} is granted by \cref{lem:conv_of_rearr} and \cref{lem:limit_vol}.
In \cref{eq:gradients_proof_4}, the following identity was proved:
	\begin{equation*}
	\int_0^{r_{v}} \abs{w_i'(\rho)}^2 \dmNN = \int_0^{v} \pths*{\frac{F_i(\xi)}{ \hNN(\HNN^{-1}(\xi))}}^2 \,\de\xi,
	\end{equation*}
where as usual $F_i(\xi)\doteq \int_0^\xi f_i^\sharp (t)\de t$. 
Notice that for any $\xi\in(0,v)$
	\begin{equation*}
	F_i(\xi)^2 = 
		\pths*{\int_0^\xi 1\cdot f_i^\sharp(t)\, \de t}^2 
		\leq \pths*{\norm*{1}_{L^2(0,\xi)}\norm[\big]{f_i^\sharp}_{L^2(0,\xi)}}^2
		\leq \xi \norm*{f_i}^2_{L^2(\ball{x_i}{R_i})}\leq C^2\xi,
	\end{equation*}
where $C\doteq \sup_i\norm*{f_i}_{W^{1,2} \pths*{\ball{x_i}{R_i},\dist, \meas_i}}$.
Thus we have:
	\begin{equation*}
	\norm*{w_i'}_{L^2((0,r_{v}),\mNN)} \leq 
		C^2 c_{N-1,N}^2\int_0^{v}\frac{\xi \, \de \xi}{\sin^{2N-2}(\HNN^{-1}(\xi))},
	\end{equation*}
where $c_{N-1,N}>0$ is the constant appearing in the definition of $\hNN$.
Since $\HNN(\xi)$ is of the same order as $\xi\mapsto \xi^N$ near $0$, the integrand at the right hand side is asymptotic to $\xi^{1-\frac{2N-2}{N}}=\xi^{-1+\frac{ 2}{N}}$ when $\xi\to 0$. In particular, the integral is finite and only depends on $N$ and $v$: the $L^{2}$-norm of $w'_i$ is thus uniformly bounded:
	\begin{equation*}
\norm*{w_i'}_{L^2((0,r_{v}),\mNN)}  \leq  C^2 c_{N-1,N}^2 \kappa(N,v).
	\end{equation*}
By Poincaré inequality, $\norm{w_i}_{W^{1,2}([0,r_1),\deu,\mNN)}$ are also uniformly bounded. Using  the Talenti-type \cref{theorem:talenti} with the associated gradient comparison \eqref{eq:GradComp},  we infer that $\norm{u_i}_{W^{1,2}(\ball{x_i}{R_i},\dist,\meas_i)}$ are uniformly bounded as well. Thus,  by \cref{prop:comp_loc_sob}  (and up to subsequences), the $u_i$'s converge in $L^2$-strong to a function $u$ and the $w_i$'s converge in $L^2$ strong to a function $w$; moreover,  by \cref{lem:conv_of_rearr}, $u_i^\star$ converges in $L^2$ strong to $u^\star$.

In order to prove point \textit{(ii)} (and, analogously, point \textit{(iii)}), we apply \cite[Corollary 4.3]{AmbrosioHonda}.  
To this aim, observe that  (up to subsequences) we can assume that $u_{i}$ converges to $u$ also weakly in $W^{1,2}$ by \cite[Proposition 3.1]{AmbrosioHonda}. Moreover,  every  $\psi\in \W_0\pths*{\ball{x}{R},\dist,\meas}$ can be recovered as the strong $W^{1,2}$ limit of a sequence of functions $\psi_i \in \W_0\pths*{\ball{x_i}{R_i},\dist,\meas_i}$ by  \cite[Lemma 2.10]{AmbrosioHonda}. Therefore we have:
	\begin{itemize}
	\item $\Ch(u_i,\psi_i)= \int_{\Ymms} f_i \psi_i \dmeas_i$ by the definition of $u_i$ as a weak solution of the Poisson problem;
	\item $\limi \Ch (u_i, \psi_i) = \Ch (u, \psi)$ by \cite[Corollary 4.3]{AmbrosioHonda};
	\item $\limi \int_{\Ymms} f_i \psi_i \, \dmeas_i = \int_{\Ymms} f \psi \, \dmeas$ by \cite[Equation (6.7)]{GigliMondinoSavare}.
	\end{itemize}
In particular,
	\begin{equation*}
	\int_{\Ymms} f \psi \dmeas = \Ch (u, \psi),
	\end{equation*}
thus $u$ is a weak solution of \cref{eq:poiss_i}. An analogous argument proves statement \textit{(iii)}.
\end{proof}

We finally have the tools to prove a stability result, by considering a contradicting sequence, applying a compactness argument, and exploiting the already proven rigidity result on the limit space.

\begin{theorem}[Stability in the Talenti-type theorem] \label{thm:stabilityTalenti}
For every $\epsilon>0, N\in [2,\infty), v\in (0,1), 0<c_{l}\leq  c_{u}<\infty$ there exists $\delta = \delta(\epsilon,N,v, \frac{c_l}{c_u})>0$ such that the following statement holds. Assume:
	\begin{enumerate}[(i)]
	\item $\Xdm$ is a $\RCD(N-1 , N)$ metric space  with $\meas(\Xmms)=1$ and $\Omega=\ball{x}{R}\subset \Xmms$ is an open ball with $\meas (\Omega) = v$;
	\item $f\in \W(\Omega, \dist, \meas)$ with $c_l \leq  \norm*{f}_{L^2\Odm} \leq \norm*{f}_{\W(\Omega,\dist,\meas)}\leq c_u$;
	\item $u\in W^{1,2}_0(\Omega,\dist,\meas)$ weakly solves $-\Delta u=f$;
	\item $w\in \W([0,r_v),\deu,\mNN)$ weakly solves $- \lapNN w=f^\star$, with $w(r_{v})=0$ and $r_v\doteq \HNN^{-1}(v)$.
	\end{enumerate}
If $\norm{u^\star-w}_{L^2((0,r_v),\mNN)}<\delta$, then there exists a spherical suspension $\Zdm$ such that 
	\begin{equation*}
	\dmGH(\Xdm,\Zdm)<\epsilon.
	\end{equation*}
\end{theorem}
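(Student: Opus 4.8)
The plan is to argue by contradiction, combining the precompactness and stability of the $\RCD(N-1,N)$ class, the convergence analysis of the Poisson problem from \cref{lem:limit_sol}, and the already-established rigidity of \cref{thm:talenti_rigidity}. A preliminary reduction: since both $-\Delta u = f$ and $-\lapNN w = f^\star$ are linear in the datum, replacing $f$ by $f/\norm{f}_{L^2(\Omega,\meas)}$ multiplies $u$, $u^\star$ and $w$ by the same positive factor, hence also $\norm{u^\star-w}_{L^2}$; since this factor lies in $[c_l^{-1},c_u^{-1}]$ and the conclusion concerns only the space $\Xmms$, the dependence of $\delta$ can be arranged to involve just $\epsilon$, $N$, $v$ and the ratio $c_l/c_u$ (the rescaled datum satisfies $1\le\norm{f}_{L^2}\le\norm{f}_{\W}\le c_u/c_l$). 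Thus it suffices to produce, for fixed $\epsilon_0,N,v$ and fixed ratio, a contradiction from a sequence that almost realizes equality but stays uniformly far from every spherical suspension.

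So suppose the statement fails: there are $\RCD(N-1,N)$ spaces $\Xmms_i$ with $\meas_i(\Xmms_i)=1$, balls $\Omega_i=\ball{x_i}{R_i}$ with $\meas_i(\Omega_i)=v$, data $f_i\in\W(\Omega_i,\dist,\meas_i)$ with $c_l\le\norm{f_i}_{L^2}\le\norm{f_i}_{\W}\le c_u$, weak solutions $u_i\in W^{1,2}_0(\Omega_i)$ of $-\Delta u_i=f_i$ and $w_i$ of the symmetrized model problem on $[0,r_v)$, such that $\norm{u_i^\star-w_i}_{L^2((0,r_v),\mNN)}\to 0$, yet $\dmGH(\Xdm[\Xmms_i],\Zdm)\ge\epsilon_0$ for every spherical suspension $\Zmms$. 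By the Bonnet--Myers bound the diameters (hence the radii $R_i$) are $\le\pi$, so up to a subsequence $R_i\to R\in(0,\pi]$; by \cref{remark:comp_RCD} we may also assume $\XX_i\overset{\pmGH}{\to}\XX$ for some pointed $\RCD(N-1,N)$ space $\XX=\Xdxm$ with $\meas(\Xmms)=1$, set up so that \cref{assumption:pmgh_conv} holds.

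Now I would feed this into \cref{lem:limit_sol}: after a further subsequence, $f_i\to f$ strongly in $L^2$ with $\meas(\ball{x}{R})=v$ (using \cref{lem:limit_vol}), $f_i^\star\to f^\star$ strongly, $u_i\to u$ strongly in $L^2$ with $u$ a weak solution of $-\Delta u=f$ on $\ball{x}{R}$, and $w_i\to w$ strongly with $w$ the corresponding model solution on $[0,r_v)$; moreover $u_i^\star\to u^\star$ strongly in $L^2(\JNN,\mNN)$ by \cref{lem:conv_of_rearr}. Passing to the limit in $\norm{u_i^\star-w_i}_{L^2}\to0$ and using the triangle inequality with these strong convergences gives $u^\star=w$ $\mNN$-a.e.\ on $(0,r_v)$, hence (both being monotone and continuous) on all of $[0,r_v]$; in particular $u^\star(0)=w(0)$. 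Also, strong $L^2$-convergence preserves norms, so $\norm{f}_{L^2}\ge c_l>0$ and thus $f\not\equiv0$. Therefore \cref{thm:talenti_rigidity} applies (with $\Eform=\Ch$, $\alpha=1$, $\bx=0$) and forces $\Xmms$ to be a spherical suspension. But $\XX_i\to\XX$ in the $\pmGH$ sense and all the spaces are compact (again Bonnet--Myers), so $\dmGH(\Xmms_i,\Xmms)\to0$; for $i$ large this gives $\dmGH(\Xmms_i,\Xmms)<\epsilon_0$ with $\Xmms$ a spherical suspension, contradicting the choice of the sequence.

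I expect the main obstacle to be the passage of the PDE to the limit, i.e.\ the content already isolated in \cref{lem:limit_sol}: one needs the Mosco-type convergence of Cheeger energies along $\pmGH$-converging $\RCD$ sequences (from \cite{AmbrosioHonda,GigliMondinoSavare}), together with the $L^2$-compactness of the Schwarz symmetrizations (\cref{lem:conv_of_rearr}, which in turn uses \cref{lem:converg_of_distr} and \cref{lem:decr_rearr}), and one must be careful that the limiting ball $\ball{x}{R}$ still has measure exactly $v$ so that the limit model problem is posed on the \emph{same} interval $[0,r_v)$, and that the limit datum is nonzero so that the rigidity theorem can be invoked. A secondary, more cosmetic point is the bookkeeping that ties $\delta$ to the data through $c_l/c_u$ only, which is handled by the scaling reduction described at the outset.
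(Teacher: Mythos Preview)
Your proposal is correct and follows essentially the same contradiction argument as the paper: extract a pmGH-convergent subsequence of $\RCD(N-1,N)$ spaces, pass the Poisson problems to the limit via \cref{lem:limit_sol} and \cref{lem:conv_of_rearr}, conclude $u^\star=w$ with $f\not\equiv 0$, and invoke the rigidity of \cref{thm:talenti_rigidity} to obtain the contradiction. Your write-up is in fact slightly more explicit than the paper's on a couple of points (the scaling reduction explaining the dependence on $c_l/c_u$, and the passage from $\mNN$-a.e.\ equality to identical equality of $u^\star$ and $w$ via monotonicity and continuity).
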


\begin{proof}
We argue by contradiction: assume there exist $\bar{\epsilon}, \bar{N}, \bar{v}, \bar{c}$ such that for any $i\in\N$ we can find an $\RCD(\bar{N}-1,\bar{N})$ space $(\Xmms_i,\dist_i,\meas_i)$, a ball $\Omega_i=\ball{x_i}{R_i}\subset \Xmms_i$,  and functions $f_i\in \W(\Omega_i,\dist_i,\meas_i)$, $u_i\in \W_0(\Omega_i,\dist_i,\meas_i)$, $w_i\in W^{1,2}([0,r_v),\deu,\mNN)$ such that for any $i\in \N$
	\begin{gather*}
	-\Delta u_i  =f_i	 \text{ weakly,}	\quad - \lapNN w_i=f_i^\star \text{ weakly,  with $w_{i}(r_{v})=0$,}\\
	\bar{c} \leq  \norm*{f}_{L^2\Odm} \leq \norm*{f}_{\W(\Omega,\dist,\meas)}\leq 1, \quad \norm*{u_i^\star - w_i}_{L^2((0,r_v),\mNN)}<\frac{1}{i},
	\end{gather*}
and moreover
	\begin{equation}\label{eq:dist_sph_susp}
		\inf \brcs[\Big]{\dmGH\pths*{(\Xmms_i,\dist_i,\meas_i), \Zdm}
		\;\Big|\;		
		\text{$\Zdm$ is a spherical suspension}} \geq \bar{\epsilon}.
	\end{equation}
Up to subsequences, we can assume that:
	\begin{itemize}
	\item $\Xdxmi$ converge to an $\RCD(N-1,N)$ space $\Xdxm$, and \cref{assumption:pmgh_conv} is satisfied (see \cref{remark:comp_RCD}); moreover, $\meas(B_{R}(x))=v$ by \cref{lem:limit_vol};
	\item $f_i$, $u_i$ and $w_i$ satisfy the conclusions of \cref{lem:limit_sol}: that is, $f_i$ converges in $L^2$-strong to a function $f\in L^2(\ball{x}{R})$; $f_i^\star$ converges in $L^2$-strong to $f^\star$; $u_i$ converges in $L^2$-strong to a weak solution $u$ of $-\Delta u = f$  in $\ball{x}{R}$ (with zero boundary condition); $w_i$ converges in $L^2$-strong to a weak solution $w$ of $-\lapNN w = f ^\star$	 in $[0,r_{v})$ with $w(r_{v})=0$.
	\end{itemize}
Notice that 
	\begin{equation}\label{eq:XdistSS}
	\dmGH\pths*{(\Xmms,\dist,\meas), \Zdm}\geq \bar{\epsilon}
	\end{equation}
for any spherical suspension $\Zdm$, by \cref{eq:dist_sph_susp}. 
However, by the $L^{2}$-strong convergence of $u_i^\star$ to $u^\star$ (\cref{lem:conv_of_rearr}) and the $L^{2}$-strong convergence of $w_i$ to $w$, one has
	\begin{equation*}
	\norm*{u^\star - w}_{L^2([0,r_v), \mNN)}= \limi \norm*{u_i^\star - w_i}_{L^2([0,r_v), \mNN)}=0,
	\end{equation*}
which implies that $u^\star = w$. Moreover, since $f$ is the $L^{2}$-strong limit of the $f_i$'s, it has $L^2$-norm bounded from below by $\bar{c}$, thus it is different from $0$ on a non-negligible set. By the rigidity in the Talenti-type comparison (\cref{thm:talenti_rigidity}),  $\Xdm$ needs to be a spherical suspension, contradicting \eqref{eq:XdistSS}.
\end{proof}

\begin{corollary}
For every $\epsilon>0, N\in [2,\infty), v\in (0,1), 0<c_{l}\leq  c_{u}<\infty$ there exists $\delta_1 = \delta_1(\epsilon,N,v, \frac{c_l}{c_u})>0$ such that the following statement holds. Assume that the conditions \textit{(i)}-\textit{(iv)} of \cref{thm:stabilityTalenti} hold. If $w(0)-u^\star(0)<\delta_1$, then there exists a spherical suspension $\Zdm$ such that 
	\begin{equation*}
	\dmGH(\Xdm,\Zdm)<\epsilon.
	\end{equation*}
\end{corollary}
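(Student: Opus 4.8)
The plan is to reduce the statement to \cref{thm:stabilityTalenti} by upgrading a small gap between $w$ and $u^\star$ \emph{at the single point $0$} to a small gap in the full $L^2((0,r_v),\mNN)$-norm. The crucial ingredient is \cref{lem:monoton_w-u}: under hypotheses \textit{(i)}--\textit{(iv)} of \cref{thm:stabilityTalenti} the map $x\mapsto w(x)-u^\star(x)$ is non-increasing on $[0,r_v]$, and by the Talenti-type comparison of \cref{theorem:talenti} it is also non-negative there. Hence
\begin{equation*}
0\le w(x)-u^\star(x)\le w(0)-u^\star(0)\qquad\text{for every }x\in[0,r_v].
\end{equation*}

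First I would integrate this pointwise bound: since $\mNN([0,r_v])=\HNN(r_v)=v$, squaring and integrating gives
\begin{equation*}
\norm*{u^\star-w}_{L^2((0,r_v),\mNN)}\le \big(w(0)-u^\star(0)\big)\,v^{1/2}.
\end{equation*}
Then, given $\epsilon>0$, $N\in[2,\infty)$, $v\in(0,1)$ and $0<c_l\le c_u<\infty$, I would take $\delta=\delta(\epsilon,N,v,\tfrac{c_l}{c_u})>0$ to be the threshold furnished by \cref{thm:stabilityTalenti}, and set $\delta_1\doteq \delta\,v^{-1/2}$, which depends only on $(\epsilon,N,v,\tfrac{c_l}{c_u})$. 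If conditions \textit{(i)}--\textit{(iv)} hold and $w(0)-u^\star(0)<\delta_1$, the displayed estimate yields $\norm*{u^\star-w}_{L^2((0,r_v),\mNN)}<\delta$, so \cref{thm:stabilityTalenti} directly provides a spherical suspension $\Zdm$ with $\dmGH(\Xdm,\Zdm)<\epsilon$.

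There is essentially no serious obstacle: the only points worth checking are that \cref{lem:monoton_w-u} applies verbatim in the present setting (it does, since its hypotheses are exactly those of \cref{theorem:talenti}, of which \textit{(i)}--\textit{(iv)} are a special case with $\Eform=\Ch$ and $\Omega$ a metric ball), and that $w(0)$ and $u^\star(0)$ are genuinely well defined pointwise (they are, by the explicit representation of $w$ and the left-continuity of $u^\sharp$, and the comparison $u^\star\le w$ is stated pointwise on $[0,r_v]$). Everything else is the elementary conversion of a uniform bound into an $L^2$ bound recorded above.
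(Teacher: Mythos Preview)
Your proof is correct and follows exactly the same approach as the paper: use \cref{lem:monoton_w-u} together with the non-negativity from \cref{theorem:talenti} to bound $w-u^\star$ uniformly by its value at $0$, convert this to the $L^2$ bound $\norm{u^\star-w}_{L^2((0,r_v),\mNN)}\le (w(0)-u^\star(0))\sqrt{v}$, and then invoke \cref{thm:stabilityTalenti} with $\delta_1=\delta/\sqrt{v}$. The paper's proof is in fact slightly terser than yours, omitting the justifications you supply at the end.
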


\begin{proof}
By \cref{lem:monoton_w-u}, $w-u^\star$ is non-increasing (and non-negative) in $[0, r_v]$. Thus 
	\begin{equation}\notag
	\norm{u^\star-w}_{L^2((0,r_v),\mNN)}\leq \pths*{w(0)-u^\star(0)}\, \sqrt{v}.
	\end{equation}
The result follows from \cref{thm:stabilityTalenti} with $\delta_1=\frac{\delta}{\sqrt{v}}$.
\end{proof}


\section{Applications}

\subsection{Improved Sobolev embeddings}
\label{sec:sob_emb}
As a first application of the Talenti-type comparison \cref{theorem:talenti}, we deduce a  series of Sobolev-type inequalities that to best of our knowledge are new in the framework of $\RCD(K,N)$ spaces (compare with  \cite[Section 3.3]{Kesavan2006} for the Euclidean setting).

\begin{theorem}\label{thm:sob_emb}
Let $\Xdm$ be an $\RCD(K,N)$ space for some $K>0$, $N\in (1,\infty)$, with  $\meas(\Xmms)=1$. Let  $\Omega \subset \Xmms$ be an open domain with  measure $v\doteq \meas(\Omega)\in (0,1)$. Let  $u:\Omega \to \R$ be a function in $W^{1,2}_0(\Omega)$ and  $f\in L^2(\Omega,\meas)$. Assume that $u$ is a weak solution to the equation $-\Eop (u)=f$, where $\Eform$ is an $\alpha$-uniformly elliptic bilinear form  as in \cref{assumption:eform}. Then the following statements hold:
	\begin{enumerate}
	\item If $f\in L^p(\Omega,\meas)$ with $\frac{N}{2}<p \leq \infty$, then $u\in L^{\infty} (\Omega,\meas)$ and 
		\begin{equation}\label{eq:sob_emb1}
		\norm*{u}_{L^\infty(\Omega,\meas)}\leq \frac{c_1(K,N,v,p)}{\alpha} \norm*{f}_{L^p(\Omega,\meas)},
		\text{ with }
		c_1(K,N,v,p)\doteq \int_0^{v} \frac{\xi^{1-\frac{1}{p}}}{\hKN(\HKN^{-1}(\xi))^2}\,\de \xi <\infty,
		\end{equation}
		where we adopt the convention that $\frac{1}{p}=0$ if $p=\infty$.
	\item If $f\in L^p(\Omega,\meas)$ with $2\leq p \leq \frac{N}{2}$, and $q\geq 1$ is such that $q\pths*{\frac{1}{p}-\frac{2}{N}}<1$, then $u\in L^{q} (\Omega,\meas)$ and 
		\begin{equation*}
		\begin{gathered}
		\norm*{u}_{L^q(\Omega,\meas)}\leq \frac{c_2(K,N, v, p, q)}{\alpha} \norm{f}_{L^p(\Omega,\meas)},\\
		\text{with} \quad
		c_2(K,N, v, p, q)\doteq \pths*{\int_0^v \pths*{\int_s^v \frac{\xi^{1-\frac{1}{p}}}{\hKN(\HKN^{-1}(\xi))^2}\,\de \xi}^q\de s }^{\frac{1}{q}} <\infty.
		\end{gathered}
		\end{equation*}
	\end{enumerate}
\end{theorem}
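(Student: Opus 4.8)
The plan is to deduce both bounds directly from the pointwise estimate established inside the proof of \cref{theorem:talenti}. Indeed, combining \eqref{eq:main_thm2} with the identity $\isoKN(\xi)=\hKN(\HKN^{-1}(\xi))$ from \cref{lemma:isoprof_model}, the weak solution $u$ of $-\Eop(u)=f$ on $\Omega$ (with $\meas(\Omega)=v$) satisfies
\[
u^\sharp(s)\ \leq\ \frac{1}{\alpha}\int_s^{v}\frac{1}{\hKN(\HKN^{-1}(\xi))^{2}}\left(\int_0^{\xi} f^\sharp(t)\,\de t\right)\de\xi,\qquad\text{for every }s\in(0,v).
\]
First I would estimate the inner integral by \Holder's inequality: since $\norm{f^\sharp}_{L^p([0,v],\leb^1)}=\norm{f}_{L^p(\Omega,\meas)}$ by \cref{prop:equimeas}, one has $\int_0^{\xi}f^\sharp(t)\,\de t\leq \norm{f}_{L^p(\Omega,\meas)}\,\xi^{1-\frac1p}$ for every $\xi\in(0,v]$, with the convention $\frac1p=0$ when $p=\infty$. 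Plugging this in gives
\[
u^\sharp(s)\ \leq\ \frac{\norm{f}_{L^p(\Omega,\meas)}}{\alpha}\int_s^{v}\frac{\xi^{1-\frac1p}}{\hKN(\HKN^{-1}(\xi))^{2}}\,\de\xi,\qquad s\in(0,v).
\]

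For statement~1 I would let $s\downarrow0$: recalling that $u^\sharp$ is non-increasing on $[0,v]$ with $\lim_{s\downarrow0}u^\sharp(s)=\ess\sup_\Omega\abs{u}=u^\sharp(0)$ (a standard property of the decreasing rearrangement, immediate from its definition), the right-hand side tends to $\tfrac{1}{\alpha}c_1(K,N,v,p)\norm{f}_{L^p(\Omega,\meas)}$, which — once $c_1<\infty$ is verified — both shows $u\in L^\infty(\Omega,\meas)$ and yields \eqref{eq:sob_emb1}. For statement~2 I would raise the second display to the power $q$ and integrate in $s$ over $(0,v)$; by equimeasurability (\cref{prop:equimeas}), $\int_\Omega\abs{u}^q\dmeas=\int_0^{v}u^\sharp(s)^q\,\de s$, so
\[
\int_\Omega\abs{u}^q\dmeas\ \leq\ \frac{\norm{f}_{L^p(\Omega,\meas)}^{q}}{\alpha^{q}}\int_0^{v}\left(\int_s^{v}\frac{\xi^{1-\frac1p}}{\hKN(\HKN^{-1}(\xi))^{2}}\,\de\xi\right)^{\!q}\,\de s\ =\ \left(\frac{c_2(K,N,v,p,q)}{\alpha}\right)^{\!q}\norm{f}_{L^p(\Omega,\meas)}^{q},
\]
which is the claim, provided $c_2<\infty$ (in particular this also gives $u\in L^q(\Omega,\meas)$).

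The only real content is therefore the finiteness of $c_1$ and $c_2$, and this is where \cref{lem:asympt} comes in; I expect it to be the main (though elementary) obstacle, since it must be carried out with the honest inequalities of \cref{lem:asympt} and not just with the limits. Away from the origin nothing happens: as $v<1$ we have $\HKN^{-1}(v)<\DKN$, while $\hKN$ is continuous and strictly positive on $(0,\DKN)$, so $\xi\mapsto\xi^{1-\frac1p}/\hKN(\HKN^{-1}(\xi))^{2}$ is bounded on $[\epsilon_0,v]$ for each $\epsilon_0\in(0,v)$. Near $\xi=0$, \cref{lem:asympt} furnishes $\HKN^{-1}(\xi)\geq\gamma_2(K,N)^{-1/N}\xi^{1/N}$ and $\hKN(t)\geq C\,t^{N-1}$ for small $t$, hence $\hKN(\HKN^{-1}(\xi))^{2}\geq C'\xi^{\frac{2(N-1)}{N}}$, so the integrand is bounded above by $C''\xi^{-\beta}$ on $(0,v]$ with $\beta\doteq1+\frac1p-\frac2N$. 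For statement~1 one has $\beta<1\iff p>\frac N2$, so $c_1<\infty$ precisely in the stated range. For statement~2, $p\leq\frac N2$ forces $\beta\geq1$; then $\int_s^{v}\xi^{-\beta}\,\de\xi$ is comparable to $s^{-(\beta-1)}$ when $\beta>1$ and to $\log(1/s)$ when $\beta=1$, so $\int_0^{v}(\,\cdot\,)^q\,\de s$ converges exactly when $q(\beta-1)<1$, that is $q\big(\frac1p-\frac2N\big)<1$ — again the stated hypothesis. Assembling these estimates completes the proof.
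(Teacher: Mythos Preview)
Your proposal is correct and follows essentially the same route as the paper: both start from the pointwise bound \eqref{eq:main_thm2}, apply H\"older to the inner integral together with equimeasurability and \cref{lemma:isoprof_model}, and then verify the finiteness of the resulting integrals via the asymptotics in \cref{lem:asympt} (the paper writes out the three cases $p>\tfrac N2$, $p=\tfrac N2$, $2\leq p<\tfrac N2$ explicitly, while you package them uniformly through the exponent $\beta=1+\tfrac1p-\tfrac2N$). The only cosmetic remark is that the lower bound $\hKN(t)\geq C\,t^{N-1}$ from \cref{lem:asympt} already holds on all of $(0,\HKN^{-1}(v))$, so your separate treatment of the region $[\epsilon_0,v]$ is harmless but unnecessary.
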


\begin{proof}
By \cref{theorem:talenti}, $u^\sharp$ satisfies the following inequality (see \cref{eq:main_thm2}):
	\begin{equation*}
	0 \leq u^\sharp (s) \leq 
		\frac{1}{\alpha}\int_s^{\meas(\Omega)} \frac{1}{\isoKN(\xi)^2} \int_0^\xi f^\sharp (t) \,\de t\,\de \xi,
		\qquad \forall s\in (0,\meas(\Omega)).
	\end{equation*}
If $f\in L^p(\Omega,\meas)$ for some $p\in[2,\infty]$, then by \Holder{} inequality, by equimeasurability of $f$ and $f^\sharp$ (\cref{prop:equimeas}), and by the characterization of the isoperimetric profile on $\JKN$ (\cref{lemma:isoprof_model}),
	\begin{equation}\label{eq:sob_emb_pr_1}
	u^\sharp (s) \leq  
		\frac{1}{\alpha} \norm*{f}_{L^p(\Omega,\meas)} 
		\int_s^{\meas(\Omega)}\frac{\xi^{1-\frac{1}{p}}}{\hKN(\HKN^{-1}(\xi))^2}\,\de \xi,
	\end{equation}
with the convention that $\frac{1}{p}=0$ if $p=\infty$. 
Now by the estimates on $\hKN$ (\cref{lem:asympt}) there exist constants $C_0>0$ and $C_1>0$ only depending on $K>0$, $N\in (1,\infty)$ and $v=\meas(\Omega)\in (0,1)$ such that for all $\xi\in[0,\meas(\Omega)]$
	\begin{equation}\label{eq:Esth>}
	\hKN(\HKN^{-1}(\xi)) \geq C_0 (\HKN^{-1}(\xi))^{N-1}\geq  C_1 \xi^{\frac{N-1}{N}}.
	\end{equation}
We can thus draw the following conclusions:

 \proofstep{Case 1:} If $\frac{N}{2}<p \leq \infty$, then 
	\begin{equation*}
	\int_s^{\meas(\Omega)} \frac{\xi^{1-\frac{1}{p}}}{\hKN(\HKN^{-1}(\xi))^2}\,\de \xi \leq 
		\frac{1}{C_1^2}\int_0^{\meas(\Omega)} \xi^{\frac{2}{N}-\frac{1}{p}-1} \,\de \xi = 
		\frac{v^{\frac{2}{N}-\frac{1}{p}}}{\pths*{\frac{2}{N}-\frac{1}{p}}C_1^2}
		<\infty, \quad \forall s\in [0,v].
	\end{equation*}
 By \cref{eq:sob_emb_pr_1} and by equimeasurability of $u$ and $u^\sharp$, this implies \cref{eq:sob_emb1}.

\proofstep{Case 2:} If $p=\frac{N}{2}$ and $q\geq 1$, then
	\begin{equation*}
	\pths*{\int_s^{\meas(\Omega)} \frac{\xi^{1-\frac{1}{p}}}{\hKN(\HKN^{-1}(\xi))^2}\,\de \xi}^q \leq 
		\frac{1}{C_1^{2q}}\pths*{\log v - \log s}^q
	\end{equation*}
	and thus
	\begin{equation*}
	\begin{split}
	\norm{u}^q_{L^q(\Omega,\meas)}=\norm{u^\sharp}^q_{L^q((0,v))}
	&\leq \frac{\norm*{f}^q_{L^p(\Omega,\meas)}}{ \alpha^q} \int_0^v \pths*{\int_s^v \frac{\xi^{1-\frac{1}{p}}}{\hKN(\HKN^{-1}(\xi))^2}\,\de \xi}^q\de s \\
	&\leq  \frac{\norm*{f}^q_{L^p(\Omega,\meas)}}{ C_1^{2q} \alpha^q} \int_0^1 (-\log s)^q \, \de s <\infty.
	\end{split}
	\end{equation*}

\proofstep{Case 3:} If $2\leq p < \frac{N}{2}$ and $q\geq 1$, with $q\pths*{\frac{1}{p}-\frac{2}{N}}<1$, then
	\begin{equation*}
	\pths*{\int_s^{\meas(\Omega)} \frac{\xi^{1-\frac{1}{p}}}{\hKN(\HKN^{-1}(\xi))^2}\,\de \xi}^q \leq \frac{1}{C_1^{2q}\pths*{\frac{1}{p}-\frac{2}{N}}^q}\pths*{s^{\frac{2}{N}-\frac{1}{p}}-v^{\frac{2}{N}-\frac{1}{p}}}^q
	\end{equation*}
and thus 
	\begin{equation*}
	\begin{split}
	\norm{u}^q_{L^q(\Omega,\meas)}=\norm{u^\sharp}^q_{L^q((0,v))}
	&\leq \frac{\norm*{f}^q_{L^p(\Omega,\meas)}}{ \alpha^q} \int_0^v \pths*{\int_s^v \frac{\xi^{1-\frac{1}{p}}}{\hKN(\HKN^{-1}(\xi))^2}\,\de \xi}^q\de s \\
	&\leq \frac{\norm*{f}^q_{L^p(\Omega,\meas)}}{C_1^{2q} \alpha^q  \pths*{\frac{1}{p}-\frac{2}{N}}^q } \int_0^v \pths*{s^{\frac{2}{N}-\frac{1}{p}}-v^{\frac{2}{N}-\frac{1}{p}}}^q  \,\de s <\infty.
	\end{split}
	\end{equation*}
\end{proof}

Let now $2\leq p \leq \infty$. If we define $D_{\Omega,p} (\Eop)$ to be the space
	\begin{equation*}
	D_{\Omega,p} (\Eop)\doteq \brcs*{u\in D_\Omega(\Eop)\stset \Eop (u)\in L^p(\Omega,\meas)},
	\end{equation*}
where $D_\Omega (\Eop)$ is the space defined in \cref{def:domain_of_LE}, then 
\cref{thm:sob_emb} can be restated as follows:

\begin{corollary}[Improved Sobolev embeddings]\label{Cor:SobolevIneq}
Let $\Xdm$ be an $\RCD(K,N)$ space for some $K>0$, $N\in (1,\infty)$,  with $\meas(\Xmms)=1$. Let $\Omega \subset \Xmms$ be an open domain with  measure $v\doteq \meas(\Omega)\in (0,1)$ and $u:\Omega \to \R$ be a function in $W^{1,2}_0(\Omega)$. Let $\Eform$ be a bilinear form satisfying \cref{assumption:eform} with uniform ellipticity parameter $\alpha$.
	\begin{enumerate}[(a)]
	\item If $\frac{N}{2}< p \leq \infty$, then $W^{1,2}_0(\Omega)\cap D_{\Omega,p}(\Eop)\subset L^{\infty}(\Omega)$ with
		\begin{equation*}
		\norm{u}_{L^{\infty}(\Omega,\meas)}\leq C(K,N,v,p,\alpha) \norm{\Eop(u)}_{L^p(\Omega,\meas)}.
		\end{equation*}
	\item If $2\leq p \leq \frac{N}{2}$ and $1\leq q < \pths*{\frac{1}{p}-\frac{2}{N}}^{-1}$, then $W^{1,2}_0(\Omega)\cap D_{\Omega,p}(\Eop)\subset L^{q}(\Omega)$ with
		\begin{equation*}
		\norm{u}_{L^{q}(\Omega,\meas)}\leq C(K,N,v,p,q,\alpha) \norm{\Eop(u)}_{L^p(\Omega,\meas)}.
		\end{equation*}		
	\end{enumerate}
\end{corollary}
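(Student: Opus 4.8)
The plan is to deduce this \namecref{Cor:SobolevIneq} directly from \cref{thm:sob_emb}, the only real work being to unwind the definition of $D_{\Omega,p}(\Eop)$. First I would set $f\doteq -\Eop(u)$; by the very definition of $D_{\Omega,p}(\Eop)$ (see \cref{def:domain_of_LE}) this is meaningful, it gives $f\in L^p(\Omega,\meas)$, and it encodes the identity $\Eform(u,\phi)=\int_\Omega f\phi\dmeas$ for every $\phi\in W^{1,2}_0(\Omega)$. Since $\meas$ is a probability measure and $p\geq 2$, the inclusion $L^p(\Omega,\meas)\subset L^2(\Omega,\meas)$ holds with $\norm{f}_{L^2(\Omega,\meas)}\leq\norm{f}_{L^p(\Omega,\meas)}$ by \Holder{}; in particular $f\in L^2(\Omega,\meas)$ and $\norm{f}_{L^p(\Omega,\meas)}=\norm{\Eop(u)}_{L^p(\Omega,\meas)}$.

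Next I would observe that, since $u\in W^{1,2}_0(\Omega)$ by hypothesis and the identity $\Eform(u,\phi)=\int_\Omega f\phi\dmeas$ holds for all $\phi\in W^{1,2}_0(\Omega)$, the function $u$ is precisely a weak solution of the Dirichlet problem $-\Eop(u)=f$ with homogeneous boundary condition, in the sense required by \cref{thm:sob_emb}. Therefore \cref{thm:sob_emb} applies verbatim to the pair $(u,f)$ with $f\in L^p(\Omega,\meas)$.

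Finally I would transcribe the two cases. For $\tfrac{N}{2}<p\leq\infty$, part~1 of \cref{thm:sob_emb} yields $u\in L^\infty(\Omega,\meas)$ with $\norm{u}_{L^\infty(\Omega,\meas)}\leq\alpha^{-1}c_1(K,N,v,p)\norm{\Eop(u)}_{L^p(\Omega,\meas)}$, so one sets $C(K,N,v,p,\alpha)\doteq c_1(K,N,v,p)/\alpha$ and (a) follows. For $2\leq p\leq\tfrac{N}{2}$ and $1\leq q<(\tfrac1p-\tfrac2N)^{-1}$ — which is exactly the range $q(\tfrac1p-\tfrac2N)<1$ appearing in part~2 of \cref{thm:sob_emb} — one gets $u\in L^q(\Omega,\meas)$ with $\norm{u}_{L^q(\Omega,\meas)}\leq\alpha^{-1}c_2(K,N,v,p,q)\norm{\Eop(u)}_{L^p(\Omega,\meas)}$, giving (b) with $C(K,N,v,p,q,\alpha)\doteq c_2(K,N,v,p,q)/\alpha$. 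The finiteness of $c_1$ and $c_2$, hence of the displayed constants, is already built into the statement of \cref{thm:sob_emb} (it comes from the lower bound $\hKN(\HKN^{-1}(\xi))\geq C_1\,\xi^{(N-1)/N}$ of \cref{lem:asympt}). I do not expect any genuine obstacle here: the single point worth a line of care is the identification, via \cref{def:domain_of_LE}, of a function in $W^{1,2}_0(\Omega)\cap D_{\Omega,p}(\Eop)$ with a weak solution of the Dirichlet problem whose datum lies in $L^p(\Omega,\meas)$.
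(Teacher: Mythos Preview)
Your proposal is correct and follows exactly the paper's approach: the paper presents the corollary explicitly as a restatement of \cref{thm:sob_emb} after introducing $D_{\Omega,p}(\Eop)$, and your argument simply spells out the identification the paper leaves implicit. One tiny remark: the line about $L^p\subset L^2$ is actually unnecessary (and the claim $p\geq 2$ is not guaranteed in case (a) when $N<4$), since $f\in L^2(\Omega,\meas)$ is already built into the definition of $D_\Omega(\Eop)$ in \cref{def:domain_of_LE}.
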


\subsection{An \texorpdfstring{$\RCD$}{RCD} version of St.~Venant-P\'olya's torsional rigidity comparison theorem}\label{Sec:Torsion}

Given an open domain $\Omega\subset \R^n$, it is well known (see e.g. \cite{Evans2010}) that the Poisson boundary value problem
\begin{equation}\label{eq:PoisTR}
	\left\lbrace
	\begin{aligned}
	 - \Delta u & = 2 \quad \text{in $\Omega$}\\
	 u&=0   \quad \text{on $\partial \Omega$}
	\end{aligned}
	\right. 
\end{equation}
has a unique weak solution $u\in W^{1,2}_0(\Omega)$ which, by standard elliptic regularity, turns out to be of class $C^{\infty}(\Omega)$ (classical solution of class  $C^{\infty}(\bar{\Omega})$,  provided $\partial \Omega$ is $C^\infty$). Define also 
\begin{equation*}
T(\Omega):=\int_{\Omega} u(x) \, \de x.
\end{equation*}
When $n=2$ and $\Omega$ is simply connected, $u$ and $T(\Omega)$ are known in the literature as \emph{stress function}  and \emph{torsional rigidity} of $\Omega$, respectively (see \cite[p. 63]{Bandle1980} or \cite[Ch. 5.2]{PolyaSzego1951} for more details). For simplicity of notation we will keep using this terminology in general.  St.~Venant in 1856 conjectured that, among simply connected domains of a given volume $v\in (0,\infty)$,  the torsional rigidity  is maximized by the round ball.  Polya in 1948 \cite{Polya1948} settled the St.~Venant conjecture by proving more generally that for every bounded open set $\Omega\subset \R^{n}$ of volume $v\in (0,\infty)$, it  holds 
\begin{equation}\label{eq:TorRigComp}
T(\Omega)\leq T(B_{v})
\end{equation}
where $B_{v}$ is a Euclidean ball of  volume  $v$. For alternative proofs and related results, the interested reader may consult \cite[Ch. 5]{PolyaSzego1951}, \cite[p. 67]{Bandle1980}, \cite[Ch. 3.6]{Kesavan2006}, \cite[Ch. 5.7]{Baernstein2019}. An inequality in the spirit of \eqref{eq:TorRigComp}  was recently proved for smooth compact Riemannian manifolds with Ricci curvature bounded below  in \cite{Gamara2015}.

As an application of the techniques developed in this work, we establish the next far reaching extension to $\RCD$ spaces of the torsional rigidity comparison \eqref{eq:TorRigComp}. To this aim let us introduce a bit of notation. 
\\Given $v\in (0,1), K>0, N\in (1,\infty)$, let $\meas_{K,N}$ be as before and $r_{v}\in \pths[\big]{0,\pi \sqrt{\frac{N-1}{K}}}$ such that $\meas_{K,N}([0, r_{v}])=v$. Let $u_{K,N,v}$ be the weak solution (in the sense of \cref{def:poiss_model}) to the  Poisson problem
\begin{equation}\label{eq:PoisTRModKNv}
	\left\lbrace
	\begin{aligned}
	 - \Delta_{K,N} u_{K,N,v} & = 2 \quad \text{in $[0, r_{v})$}\\
	 u_{K,N,v}(r_{v})&=0  
	\end{aligned}
	\right. 
\end{equation}
and set
\begin{equation}\label{eq:defTKNv}
T_{K,N,v}:= \int_{[0, r_{v}]} u_{K,N,v} \, \dmKN.
\end{equation}

\begin{theorem}[$\RCD$ version of St.~Venant-P\'olya's  Theorem]\label{thm:StVenPol}
Let $\Xdm$ be an $\RCD(K,N)$ space for some $K>0$, $N\in (1,\infty)$,  with $\meas(\Xmms)=1$, and let $\Omega\subset \Xmms$ be an open domain with  measure $v\doteq \meas(\Omega)\in (0,1)$.  
\\Let $u\in W^{1,2}_{0}(\Omega)$ be the weak solution to the Poisson problem \eqref{eq:PoisTR} and set $T(\Omega)= \int_{\Omega} u \, \de \meas$.
Then
\begin{enumerate}
\item  {\rm Comparison}. $u\geq 0$ $\meas$-a.e. on $\Omega$ and  $u^{\star}\leq u_{K,N,v}$. Thus, in particular,  $T(\Omega)\leq T_{K,N,v}$.
\item  {\rm Rigidity}. Assume $N\geq 2$ and $K=N-1$. Then   $T(\Omega)= T_{N-1,N,v}$ if and only if $\Xdm$ is isomorphic as a \mms{} to a spherical suspension, i.e. there exists an $\RCD(N-2,N-1)$ space  $\Ydm$ with $\measY(\Ymms)=1$ such that $\Xdm$ is isomorphic as a \mms{} to $[0,\pi]\times^{N-1}_{\sin} \Ymms$.

\item  {\rm Stability}. 
Given $N\in [2,\infty), v\in (0,1), \varepsilon>0$ there exists $\delta=\delta(N,v,\varepsilon)>0$ such that if $K=N-1$ and $\Omega=B_{R}(x)$ is an open metric ball with $\meas(B_{R}(x))=v\in (0,1)$ satisfying $T(\Omega)\geq T_{N-1, N,v}-\delta$ then $\Xdm$ is $\varepsilon$-mGH close to a spherical suspension, i.e. there exists a spherical suspension $\Zdm$ such that 
	$
	\dmGH(\Xdm,\Zdm)<\epsilon.
	$

\end{enumerate}
\end{theorem}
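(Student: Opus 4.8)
\emph{Strategy.} The plan is to obtain all three assertions by specializing the Talenti-type machinery of \cref{sec:comparison,sec:rig_arig} to the constant datum $f\equiv 2$ and the Dirichlet form $\Eform=\Ch$ (so $\alpha=1$ and $\Eop=\Delta$), and then translating between the solution $u$ on $\Omega$ and its Schwarz symmetrization $u^\star$. The elementary but crucial observation is that the constant $2$ equals its own decreasing rearrangement, so $f^\star\equiv 2$ on $[0,r_v]$; hence the symmetrized problem \eqref{eq:symm_problem} with $\alpha=1$ is exactly \eqref{eq:PoisTRModKNv}, whose unique weak solution is $u_{K,N,v}$ by \cref{prop:sol_mod_space}. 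For the comparison statement I would first establish $u\geq 0$ $\meas$-a.e.\ by the weak maximum principle: testing $-\Eop(u)=f$ against the negative part $u^-\in W^{1,2}_0(\Omega)$ and using strong locality of $\Eform$ (so that $\Eform(u^+,u^-)=0$) gives $-\Eform(u^-,u^-)=\int_\Omega 2\,u^-\dmeas\geq 0$; since $\Eform$ is non-negative definite and $\alpha$-uniformly elliptic this forces $\Ch(u^-,u^-)=0$, hence $u^-\equiv 0$. Then \cref{theorem:talenti}(1) gives $u^\star\leq u_{K,N,v}$ on $[0,r_v]$, and since $u\geq 0$, \cref{prop:equimeas}(b) with $p=1$ yields $T(\Omega)=\int_\Omega u\dmeas=\int_{[0,r_v]}u^\star\dmKN\leq\int_{[0,r_v]}u_{K,N,v}\dmKN=T_{K,N,v}$.

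\emph{Rigidity.} Assume $N\geq 2$ and $K=N-1$. If $T(\Omega)=T_{N-1,N,v}$, then by the comparison step $\int_{[0,r_v]}(u_{N-1,N,v}-u^\star)\dmNN=0$ with a non-negative integrand; since $\hNN>0$ on $(0,r_v)$ this forces $u^\star=u_{N-1,N,v}$ $\leb^1$-a.e.\ on $(0,r_v)$, and in particular $u^\star(\bar x)=w(\bar x)$ at some $\bar x\in[0,r_v)$, where $w=u_{N-1,N,v}$. As $f\equiv 2\not\equiv 0$ and $\Eform=\Ch$, \cref{thm:talenti_rigidity} then gives that $\Xdm$ is a spherical suspension $[0,\pi]\times^{N-1}_{\sin}\Ymms$ over an $\RCD(N-2,N-1)$ space with unit mass. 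For the converse I would check directly that on such a suspension the metric ball of measure $v$ centered at a tip carries the radial torsion function $u=u_{N-1,N,v}(\dist(\cdot,\text{tip}))$ — the radial part of $\Delta$ there being $\Delta_{N-1,N}$ since $\hNN'/\hNN=(N-1)\cot$ — so that $u^\star=u_{N-1,N,v}$ and $T=T_{N-1,N,v}$; this identifies exactly when equality in part 1 is attained.

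\emph{Stability.} I would reduce to \cref{thm:stabilityTalenti}. Since $f\equiv 2$ has $\norm{f}_{L^2(\Omega,\meas)}=2\sqrt v$ and vanishing minimal weak upper gradient, we have $\norm{f}_{L^2(\Omega,\meas)}=\norm{f}_{\W(\Omega,\dist,\meas)}=2\sqrt v$, so one may take $c_l=c_u=2\sqrt v$ and obtain a constant $\delta_0=\delta_0(\epsilon,N,v,1)>0$. Set $M\doteq u_{N-1,N,v}(0)$, finite by the representation \eqref{eq:repr_form_1} for the bounded datum $2$, and $\delta\doteq\delta_0^2/(2M)$. Suppose $\Omega=\ball{x}{R}$ has measure $v$ and $T(\Omega)\geq T_{N-1,N,v}-\delta$. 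With $w=u_{N-1,N,v}$, the comparison step gives $0\leq w-u^\star\leq w(0)=M$ on $[0,r_v]$, $w-u^\star$ is non-increasing by \cref{lem:monoton_w-u}, and $\int_{[0,r_v]}(w-u^\star)\dmNN=T_{N-1,N,v}-T(\Omega)\leq\delta$, so
\begin{equation*}
\norm{u^\star-w}_{L^2((0,r_v),\mNN)}^2=\int_{[0,r_v]}(w-u^\star)^2\dmNN\leq M\int_{[0,r_v]}(w-u^\star)\dmNN\leq M\delta<\delta_0^2,
\end{equation*}
and \cref{thm:stabilityTalenti} then produces a spherical suspension $\Zdm$ with $\dmGH(\Xdm,\Zdm)<\epsilon$.

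\emph{Main obstacle.} The substantive content is carried by \cref{theorem:talenti,thm:talenti_rigidity,thm:stabilityTalenti}, so this theorem is mostly an exercise in applying them; the steps I expect to need genuine care are the sign estimate $u\geq 0$ for the abstract operator $\Eop$ (obtained from strong locality and uniform ellipticity of $\Eform$) and, in the rigidity and stability parts, passing from the single scalar gap $T_{N-1,N,v}-T(\Omega)$ back to pointwise, respectively $L^2$, control of $u^\star-w$ — this hinges on the strict positivity of $\hNN$ on $(0,r_v)$ and on the monotonicity of $w-u^\star$ supplied by \cref{lem:monoton_w-u}.
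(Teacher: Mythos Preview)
Your argument is correct. For the comparison and rigidity parts you follow essentially the paper's line; the only cosmetic difference is that you obtain $u\geq 0$ by testing against $u^-$ and using strong locality plus ellipticity, while the paper uses the variational characterization of $u$ as minimizer of $J(w)=\tfrac12\int_\Omega|\nabla w|^2\dmeas-2\int_\Omega w\,\dmeas$ together with $J(|u|)\leq J(u)$.

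For the stability part your route genuinely differs from the paper's. The paper does \emph{not} invoke \cref{thm:stabilityTalenti}; instead it reruns the contradiction/compactness scheme from scratch: take a sequence of spaces and balls violating the conclusion, pass to a pmGH limit via \cref{remark:comp_RCD} and \cref{lem:limit_vol}, use \cref{lem:limit_sol} to get $L^2$-strong convergence of the $u_i$ (hence $T(\Omega_i)\to T(\Omega)=T_{N-1,N,v}$), and then apply the rigidity of part 2 on the limit space to reach a contradiction. Your approach is more direct: you convert the scalar gap $T_{N-1,N,v}-T(\Omega)=\int_{[0,r_v]}(w-u^\star)\dmNN$ into an $L^2$ gap via the pointwise bound $0\leq w-u^\star\leq w(0)=M$, obtaining $\|w-u^\star\|_{L^2}^2\leq M\delta$, and then quote \cref{thm:stabilityTalenti} (its hypotheses are met since $f\equiv 2$ has $\|f\|_{L^2(\Omega)}=\|f\|_{\W(\Omega)}=2\sqrt v$, so one may take $c_l=c_u$). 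This is cleaner and avoids duplicating the compactness machinery; the paper's version has the advantage of being self-contained, not relying on \cref{thm:stabilityTalenti} as a black box. Incidentally, your appeal to \cref{lem:monoton_w-u} in the stability step is not actually used: the inequality $(w-u^\star)^2\leq M(w-u^\star)$ only needs $0\leq w-u^\star\leq M$.
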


\begin{proof}
First of all, notice that the weak solution $u\in W^{1,2}_{0}(\Omega, \dist, \meas)$ to the Poisson problem \eqref{eq:PoisTR} is the unique minimizer of the energy functional
$$
J(w):=\frac{1}{2} \int_{\Omega} |\nabla w|^{2} \, \de \meas - \int_{\Omega} 2 w\, \de \meas, \quad w\in W^{1,2}_{0}(\Omega).
$$ 
Since $J(u)\geq J(|u|)$, it follows that $u\geq 0$ $\meas$-a.e. on $\Omega$ and thus $u^{\star}\geq 0$.
\\The fact that $u^{\star}\leq  u_{K,N,v}$ $\meas_{K,N}$-a.e. on $[0,r_{v}]$  is a direct consequence of the Talenti-type  \cref{theorem:talenti}. Recalling \cref{prop:equimeas}, we infer that
\begin{equation*}
T(\Omega)=\int_{\Omega} u\,  \de \meas= \int_{[0, r_{v})} u^{\star} \, \dmKN \leq   \int_{[0, r_{v})} u_{K,N,v}\,  \dmKN=T_{K,N,v}.
\end{equation*}
Clearly, if $T(\Omega)=T_{K,N,v}$ then $u^{\star}=  u_{K,N,v}$ $\meas_{K,N}$-a.e.~on $[0,r_{v}]$ and we can apply  \cref{thm:talenti_rigidity} to infer that $\Xdm$ is isomorphic as a \mms{} to a spherical suspension (provided $N\in [2,\infty)$ and $K=N-1$).
\\We prove the last claim by contradiction. Assume that there exist $\bar{\epsilon}, \bar{N}, \bar{v}$ such that for any $i\in\N$ we can find an $\RCD(\bar{N}-1,\bar{N})$ space $(\Xmms_i,\dist_i,\meas_i)$, a ball $\Omega_i=\ball{x_i}{R_i}\subset \Xmms_i$,  and  $u_i\in \W_0(\Omega_i,\dist_i,\meas_i)$ such that for any $i\in \N$
	\begin{gather*}
	-\Delta u_i  = 2\quad \text{weakly,}\quad 	T_{N-1,N,v}-T(\Omega_{i}) <\frac{1}{i},
	\end{gather*}
and moreover
	\begin{equation}\label{eq:dist_sph_suspPol}
		\inf \brcs[\Big]{\dmGH\pths*{(\Xmms_i,\dist_i,\meas_i), \Zdm}
		\;\Big|\;		
		\text{$\Zdm$ is a spherical suspension}} \geq \bar{\epsilon}.
	\end{equation}
Up to subsequences, we can assume that $R_{i}\to R$ and that:
	\begin{itemize}
	\item $\Xdxmi$ converge to an $\RCD(N-1,N)$ space $\Xdxm$, and \cref{assumption:pmgh_conv} is satisfied (see \cref{remark:comp_RCD}); moreover, $\meas(B_{R}(x))=v$ by  \cref{lem:limit_vol};

	\item  $u_i$ satisfy the conclusions of \cref{lem:limit_sol}: that is, $u_i$ converges in $L^2$-strong to a weak solution $u\in \W_{0}(B_{R}(x))$ of $-\Delta u = 2$  in $\ball{x}{R}$ (with zero boundary condition).	 In particular, it follows that (see \cite[Eq. (6.7)]{GigliMondinoSavare}) 
	\begin{equation}\label{eq:LimitRigidity}
	T(\Omega)=\int_{\Omega} u\, \de \meas= \lim_{i\to \infty}  \int_{\Omega_{i}} u_{i}\, \de \meas_{i}=\lim_{i\to \infty} T(\Omega_{i}) =T_{N-1,N,v}.
	\end{equation}
	\end{itemize}
Notice that 
	\begin{equation}\label{eq:distXSSP}
	\dmGH\pths*{(\Xmms,\dist,\meas), \Zdm}\geq \bar{\epsilon}
	\end{equation}
for any spherical suspension $\Zdm$, by \cref{eq:dist_sph_suspPol}. However, combining \eqref{eq:LimitRigidity}  with the rigidity proved above in part 2, we infer that $\Xdm$ needs to be a spherical suspension, contradicting \eqref{eq:distXSSP}.
\end{proof}


\subsection{An alternative proof for the \texorpdfstring{$\RCD$}{RCD} version of  Rayleigh-Faber-Krahn-B\'erard-Meyer comparison theorem}

The next result was proved for the $p$-Laplacian by Mondino and Semola \cite{Mondino2019} in the more general setting of essentially non-branching $\CD(K,N)$ spaces (for $K>0$), as a consequence of a \PSz{} type inequality.
We give below an alternative proof in case $p=2$, based instead on Talenti's comparison theorem for $\RCD$ spaces. 

Firstly, we recall the notions of \textit{first eigenfunction} and \textit{first eigenvalue} of the Laplacian:

\begin{definition}
Let $\Omega \subset \Xmms$ be an open domain. For any non-zero function $w \in \W\Odm$ we define the Rayleigh quotient to be
	\begin{equation}
	\mathcal{R}_\Omega(w)\doteq \frac{\int_\Omega \mwug{w}^2 \dmeas}{\int_\Omega w^2 \dmeas}.
	\end{equation}
We say that:
	\begin{enumerate}[(i)]
	\item $\lambda_\Omega\doteq \inf \brcs*{\mathcal{R}_\Omega(w)\stset w\in\W_0(\Omega), w\not\equiv 0}$ is the first eigenvalue of the Laplacian in $\Omega$ with Dirichlet homogeneous conditions;
	\item $u \in \W_0(\Omega)$ is a first eigenfunction of the Laplacian in $\Omega$ (with Dirichlet homogeneous conditions) if it minimizes $\mathcal{R}_\Omega$ among functions $w\in\W_0(\Omega)$, $w\not\equiv 0$ (that is, $\mathcal{R}_\Omega(u)=\lambda_\Omega$).
	\end{enumerate}
When $\Xdm=\modelspace$, $v\in(0,1)$, and $\Omega=[0,\HKN^{-1}(v))$, we will denote the first eigenvalue with $\lambda_{K,N,v}$.
\end{definition}

\begin{theorem}
Let $\Xdm$ be an $\RCD(K,N)$ space for some $K>0$, $N\in (1,\infty)$, and let $\Omega\subset \Xmms$ be an open domain with measure $v\doteq \meas(\Omega)\in (0,1)$.
Then:
	\begin{enumerate}[(i)]
	\item  $\lambda(\Omega)\geq \lambda_{K,N,v}$;
	\item There exists a unique first eigenfunction of the Laplacian in $\Omega$, up to multiplication by a constant; such an eigenfunction can be chosen to be strictly positive and continuous in $\Omega$;
	\item If $u$ is a positive first eigenfunction, then $0<u^\star\leq w$ in $[0,r_v)$, where $r_v\doteq \HKN^{-1}(v)$ and $w$ is a solution to $-\lapKN w = \lambda_\Omega u^\star$ in $[0,r_v)$ with $w(r_v)=0$.
	\end{enumerate}
\end{theorem}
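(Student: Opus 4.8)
The plan is to deal with the three assertions in the order (ii), (iii), (i): first I would produce the first eigenfunction by the direct method and establish its positivity, continuity and simplicity by the standard spectral/regularity machinery available on $\RCD(K,N)$ spaces; then (iii) becomes a one-line application of the Talenti-type \cref{theorem:talenti} to the equation satisfied by that eigenfunction; finally (i) follows from (iii) by using the model solution $w$ as a competitor in the variational characterization of $\lambda_{K,N,v}$.

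\emph{Step 1 (part (ii)).} I would minimize $\mathcal{R}_\Omega$ over $\brcs*{w\in\W_0(\Omega)\stset \norm*{w}_{L^2(\Omega,\meas)}=1}$. A minimizing sequence is bounded in $\W\Xdm$, and since an $\RCD(K,N)$ space with $K>0$ is compact and the embedding $\W_0(\Omega)\hookrightarrow L^2(\Omega,\meas)$ is compact (Rellich–Kondrachov for the finite, doubling, Poincaré measure $\meas$), a subsequence converges weakly in $\W\Xdm$ and strongly in $L^2$ to some $u\in\W_0(\Omega)$ with $\norm*{u}_{L^2}=1$; lower semicontinuity of $\Ch$ gives $\Ch(u,u)\le\lambda_\Omega$, so $u$ is a first eigenfunction and weakly solves $-\Delta u=\lambda_\Omega u$. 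Because the minimal weak upper gradient is unchanged when $u$ is replaced by $\abs{u}$, the function $\abs{u}$ is again a minimizer, hence a non-negative first eigenfunction, and by the Harnack inequality on $\RCD(K,N)$ spaces (valid, the space being locally doubling and Poincaré) it is either identically $0$ (excluded) or strictly positive on the connected domain $\Omega$; thus every first eigenfunction has a fixed sign and I fix $u>0$. Continuity of $u$ in $\Omega$ follows from elliptic regularity, bootstrapping $-\Delta u=\lambda_\Omega u\in L^2$ through the Sobolev estimates of \cref{Cor:SobolevIneq} until $\Delta u\in L^p$ with $p>N/2$ and then invoking De Giorgi–Nash–Moser. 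Uniqueness up to a constant: were the first eigenspace at least $2$-dimensional, it would contain a first eigenfunction $u_2$ with $\int_\Omega u_1u_2\dmeas=0$; but $u_2$ has fixed sign and $u_2\not\equiv0$, while $u_1>0$, so $\int_\Omega u_1u_2\dmeas\ne0$, a contradiction.

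\emph{Step 2 (part (iii)) and Step 3 (part (i)).} Let $u>0$ be the first eigenfunction from Step 1, so $u\in\W_0(\Omega)$ weakly solves $-\Delta u=f$ with $f\doteq\lambda_\Omega u\in L^2(\Omega,\meas)$, $f\ge0$; note $\lambda_\Omega>0$ (a function in $\W_0(\Omega)$ with vanishing minimal weak upper gradient is constant, hence $0$, since $\meas(\Omega)<\meas(\Xmms)$), and symmetrization commutes with multiplication by the positive constant $\lambda_\Omega$, so $f^\star=\lambda_\Omega u^\star$. Applying \cref{theorem:talenti} with $\Eform=\Ch$ — which satisfies \cref{assumption:eform} with $\alpha=1$ (non-negative, strongly local, $1$-uniformly elliptic, of order $1$) — yields $u^\star(x)\le w(x)$ for every $x\in[0,r_v]$, where $w\in\W_0([0,r_v),\deu,\mKN)$ is the weak solution of $-\lapKN w=\lambda_\Omega u^\star$ with $w(r_v)=0$ provided by \cref{prop:sol_mod_space} (here $u^\star\in L^2([0,r_v),\mKN)$ by \cref{prop:equimeas}); moreover $u^\star>0$ on $[0,r_v)$, since $u^\star(0)=\ess\sup u>0$ and for $s\in(0,v)$ one has $\meas(\brcs{u>\epsilon})\to\meas(\brcs{u>0})=v>s$ as $\epsilon\downarrow0$, forcing $u^\sharp(s)>0$. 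This proves (iii). For (i): $w$ is a non-negative, non-trivial element of $\W_0([0,r_v),\deu,\mKN)$, hence admissible for $\lambda_{K,N,v}$; testing the weak formulation of $-\lapKN w=\lambda_\Omega u^\star$ against $w$ itself and using $0\le u^\star\le w$,
\begin{equation*}
\int_0^{r_v}\mwug{w}^2\dmKN=\lambda_\Omega\int_0^{r_v}u^\star w\dmKN\le\lambda_\Omega\int_0^{r_v}w^2\dmKN,
\end{equation*}
whence $\lambda_{K,N,v}\le\mathcal{R}_{[0,r_v)}(w)\le\lambda_\Omega$, which is (i).

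\emph{Main obstacle.} The conceptual core — the comparison $u^\star\le w$ — is essentially free once \cref{theorem:talenti} is in hand, so the real work sits in Step 1, where two non-formal inputs must be invoked with care: the compact embedding $\W_0(\Omega)\hookrightarrow L^2(\Omega)$ (for existence of the eigenfunction) and the Harnack inequality together with De Giorgi–Nash–Moser regularity on $\RCD(K,N)$ spaces (for strict positivity, continuity, and simplicity of $\lambda_\Omega$); these are available in the literature but require precise references. A minor point to check is that $f^\star=\lambda_\Omega u^\star$, so that the model datum is exactly $\lambda_\Omega u^\star$ and the solution $w$ coincides with the one in the statement.
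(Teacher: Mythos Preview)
Your proposal is correct and follows essentially the same route as the paper: existence of a first eigenfunction by the direct method using the compact embedding $\W\Xdm\hookrightarrow L^2$, positivity/continuity/simplicity via Harnack-type regularity for Poisson equations on doubling--Poincar\'e spaces, then application of \cref{theorem:talenti} to $-\Delta u=\lambda_\Omega u$ to obtain $u^\star\le w$, and finally testing $-\lapKN w=\lambda_\Omega u^\star$ against $w$ to deduce $\lambda_{K,N,v}\le\lambda_\Omega$. The only minor stylistic differences are that the paper establishes simplicity by observing that a suitable linear combination $u_1-\gamma u_2$ would change sign (rather than your orthogonality argument), and obtains continuity by a direct citation of Harnack-type results rather than the Sobolev bootstrapping you outline; both variants are standard and equivalent in effect.
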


\begin{proof}
\proofstep{Step 1:} A first eigenfunction exists. This is standard and was already proved for example in \cite[Theorem 4.3]{Mondino2019}, but we recall here the argument: let $\brcs{u_n}_n$ be a minimizing sequence for $\mathcal{R}_\Omega$ with $u_n\in\W_0(\Omega)$, $\norm*{u_n}_{L^2(\Omega,\meas)}=1$ and $\int_\Omega \mwug{u}^2\dmeas \searrow \lambda_\Omega$. Since the embedding $W^{1,2}\Xdm \subset L^2 (\Xmms,\meas)$ is compact for an $\RCD(K,N)$ space with $K>0$, $N\in (1,\infty)$ (see \cite[Proposition 6.7]{GigliMondinoSavare}), the sequence $u_n$ converges to a function $u\in\W_0(\Omega)$ in the strong $L^2(\Omega,\meas)$ sense. Thus $\norm*{u}_{L^2(\Omega,\meas}=1$; moreover, from the very definition of $\lambda_\Omega$ and by the $L^2$-lower semicontinuity of the Cheeger energy, it holds that
	\begin{equation}
	\lambda_\Omega \leq \int_{\Omega}\mwug{u}^2 \dmeas \leq \liminf_{n\to \infty} \int_{\Omega}\mwug{u_n}^2 \dmeas = \lambda_\Omega.
	\end{equation}
Thus $u$ is a first eigenfunction.

\proofstep{Step 2:} Any first eigenfunction $u\in\W_0(\Omega)$ is a weak solution of: 
	\begin{equation}\label{eq:eigenval}
	\left\lbrace
	\begin{aligned}
	 - \Delta u & = \lambda_\Omega u \quad \text{in $\Omega$}\\
	 u&=0   \quad \text{on $\partial \Omega$}
	\end{aligned}
	\right.\quad .
	\end{equation}
This relies on a standard variational argument: for any $w\in\W_0(\Omega)$, we can explicitly compute the derivative of $\epsilon\mapsto \mathcal{R}_\Omega(u+\epsilon w)$ at $\epsilon=0$:
	\begin{equation}
	\restr{\frac{\de}{\de \epsilon}\mathcal{R}_\Omega(u+\epsilon w)}{\epsilon=0}
	=2\frac{\Ch(u,w)- \lambda_\Omega \int_\Omega u w \dmeas}{\norm*{u}_{L^2\pths{\Omega}}}.
	\end{equation}
Since this derivative must vanish, the identity $\Ch(u,w)= \lambda_\Omega \int_\Omega uw \dmeas$ needs to hold for any $w\in\W_0(\Omega)$, which proves that $u$ is a weak solution of  \eqref{eq:eigenval}.

\proofstep{Step 3:} Since $\RCD(K,N)$ spaces are locally doubling and Poincaré (see \cite{Sturm2006b, Rajala2012}), the Harnack-type results proved in \cite{Latvala} hold in these spaces: hence any first eigenfunction is continuous (Theorem 5.1 therein), and is strictly positive in $\Omega$ up to multiplying by a constant (Corollary 5.7 and Corollary 5.8 therein). The same results also imply the uniqueness of the first eigenfunction up to a multiplicative constant: if $u_1$ and $u_2$ are two first eigenfunctions with $\frac{u_1}{u_2}$ non-constant, then there exists $\gamma>0$ such that $u_1-\gamma u_2$ is a first eigenfunction that changes sign in $\Omega$.

\proofstep{Step 4:} Let now $w$ be a solution to $-\lapKN w = \lambda_\Omega u^\star$ in $[0,r_v)$ with $w(r_v)=0$. By the definition of $w$ and by using $w$ itself as a test function, it holds that $\int_0^{r_v} \mwug{w}^2 \dmKN=\lambda_\Omega \int_0^{r_v} u^\star w \dmKN$; by the Talenti-type theorem it holds that $0<u^\star \leq w$.
Thus 
	\begin{equation}
	\lambda_{K,N,v}\leq \frac{\int_0^{r_v} \mwug{w}^2 \dmKN}{\int_0^{r_v} w^2 \dmKN} = \lambda_\Omega\frac{\int_0^{r_v} u^\star w \dmKN}{\int_0^{r_v} w^2 \dmKN}\leq \lambda_\Omega\frac{\int_0^{r_v} w^2 \dmKN}{\int_0^{r_v} w^2 \dmKN}=\lambda_\Omega.
	\end{equation}
\end{proof}


\section{Appendix: the case of a smooth Riemannian manifold with positive Ricci curvature} 
Since some of the results of the paper seem to be new even in the setting of smooth Riemannian manifolds (compare with \cite{CLM}),  in this appendix we briefly give the corresponding smooth statements without the technicalities of $\RCD(K,N)$ spaces. In this way, our aim is to make the results accessible to a more general audience. \par
\medskip
Let $(M,g)$ be a complete $N$-dimensional Riemannian manifold, $N\geq 2$, with Ricci curvature tensor satisfying ${\rm Ric}_{g}\geq K \,g$ for some constant $K>0$. 
By Bonnet-Myers theorem, $M$ must be compact. Denote with ${\rm vol}_{g}$ the Riemannian volume measure and with $\meas_{g}\doteq {\rm vol}_{g}(M)^{-1} \, {\rm vol}_{g}$ the associated normalized measure.  Let $h\in \Gamma_{meas}( T^{*}M {\otimes} T^{*}M)$ be a symmetric bilinear form on $M$ with measurable coefficients and assume there exists $\alpha,\beta>0$ such that 
\begin{equation}\label{eq:Aalphabeta}
 \alpha\,  g(X,X) \leq h(X,X)\leq \beta \, g(X,X), \quad  \forall X\in TM.
\end{equation}
Notice that the upper bound in terms of $\beta$ immediately yields that the coefficients of $h$ in local coordinates are in $L^{\infty}$.
Let $\Omega\subset M$ be an open subset with $\meas_{g}(\Omega)\in (0,1)$ and consider the bilinear form $\Eform_{h}: W^{1,2}_{0}(\Omega) \times W^{1,2}_{0}(\Omega) \to \R$ defined by
\begin{equation}\label{eq:defEh}
\Eform_{h} (u,v) \doteq \int_{\Omega} h(\nabla u, \nabla v)\, \de \meas_{g}, \quad \forall u,v\in  W^{1,2}_{0}(\Omega).
\end{equation}
Let $A_{h}\in \Gamma_{meas}( TM{\otimes} T^{*}M)$  be the symmetric endomorphism associated to $h$ (i.e. with local representation $A_{h}=g^{-1}h$) and define the differential operator $\Delta_{h}: W^{1,2}_{0}(\Omega)\to  W^{-1,2}(\Omega)$
\begin{equation}\label{eq:defDeltah}
\Delta_{h} u\doteq {\rm div}_{g} (A_{h} \nabla u),\quad   \forall u\in  W^{1,2}_{0}(\Omega),
\end{equation}
where $ {\rm div}_{g}$ is the divergence with respect to $g$. Of course, if $h=g$ we have that $\Delta_{g}$ is the standard Laplace-Beltrami operator of $g$.
Integration by parts  gives
\begin{equation*}
\Eform_{h} (u,v) \doteq \int_{\Omega} h(\nabla u, \nabla v)\, \de \meas_{g}=\int_{\Omega} (\Delta_{h} u)\, v \, \de \meas_{g}, \quad \forall u,v\in W^{1,2}_{0}(\Omega).
\end{equation*}
Given $f\in L^2(\Omega)$, we say that a function $u\in W^{1,2}_0(\Omega)$ is a weak solution to the Poisson problem
	\begin{equation}\label{eq:defPoissonApp}
	\begin{cases}
		-\Delta_{h} u=f 	& \text{in $\Omega$}\\
		u=0						& \text{on $\partial \Omega$}
	\end{cases}
	\end{equation} 
if 
	\begin{equation}\label{eq:defPoissonAppWeak}
	\Eform_{h}(u,v)=\int_{\Omega} f v   \, \de{\rm vol}_{g}, 	\qquad \text{$\forall v\in W^{1,2}_0 (\Omega)$.}
	\end{equation}

Let $\mathbb{S}_{K}^{N}$ be a sphere of dimension $N\geq 2$ and constant Ricci curvature $K>0$. Denote with $\meas_{K,N}$ the normalized volume measure on $\mathbb{S}_{K}^{N}$.
Fix $p\in \mathbb{S}_{K}^{N}$ once for all. For every measurable subset $\Omega\subset M$ with volume $\meas_{g}(\Omega)=v\in (0,1)$, let $r_{v}>0$ be such that $\meas_{K,N}(B_{r_{v}}(p))=v$.

 For a measurable function $u:\Omega\to \R$,  define the Schwarz symmetrization $u^{\star}: B_{r_{v}}(p)\to [0,\infty]$ as 
 $$
 u^{\star}(x)\doteq u^{\sharp}(\meas_{K,N}(B_{\dist(p,x)}(p)), \quad \forall x\in B_{r_{v}}(p),
 $$
 where  $u^{\sharp}:[0, \meas(\Omega)]\to [0,\infty]$ is the decreasing rearrangement of $u$ defined in \eqref{eq:Defusharp}. Let us stress that while $u$ is defined on $\Omega\subset M$, the symmetrized function $u^{\star}$ is defined on $B_{r_{v}}(p)\subset \mathbb{S}_{K}^{N}$.
 
 Our Talenti-type comparison theorem compares the Schwarz symmetrization $u^{\star}$ of a weak solution $u$ to the Poisson problem \eqref{eq:defPoissonApp} with the weak solution $w\in W^{1,2}_{0}(B_{r_{v}}(p))$ of the following symmetrized  Poisson problem on $\mathbb{S}_{K}^{N}$:
 
 \begin{equation}\label{eq:defPoissonAppSphere}
	\begin{cases}
		-\alpha \Delta_{\mathbb{S}_{K}^{N}} w= f^{\star}	& \text{in $B_{r_{v}}(p)$}\\
		w=0						& \text{on $\partial B_{r_{v}}(p)$}
	\end{cases},
	\end{equation} 
where $\Delta_{\mathbb{S}_{K}^{N}}$ is the standard Laplace-Beltrami operator on $\mathbb{S}_{K}^{N}$. Notice that, by equi-measurability,  $f^{\star}\in L^{2}(B_{r_{v}}(p))$ with $\|f\|_{L^{2}(\Omega)}=\|f^{\star}\|_{L^{2}(B_{r_{v}}(p))}$. Moreover, since $f^{\star}$ depends only on the radial coordinate from $p$, \eqref{eq:defPoissonAppSphere} reduces to an ODE on the interval $[0,r_{v}]$, corresponding to the  model problem studied in \cref{subsec:equation_on_model_space}.

We are now in position to state our main results (\cref{theorem:talenti,thm:talenti_rigidity,thm:stabilityTalenti,Cor:SobolevIneq}) in the smooth framework.

\begin{theorem}[A Talenti-type comparison for Riemannian manifolds with positive Ricci curvature]\label{theorem:talentiAppendix}
Let $(M,g)$ be an $N$-dimensional compact Riemannian manifold without boundary with ${\rm Ric}_{g}\geq K \,g$ for some constant $K>0$, $N\geq 2$.
Let $\Omega\subset M$ be an open subset with $\meas_{g}(\Omega)=v\in (0,1)$. Let   $f\in L^2(\Omega,\meas)$ and $u\in W^{1,2}_{0}(\Omega)$ be a weak solution to the Poisson problem  \eqref{eq:defPoissonApp}, where the operator $\Delta_{h}$  was defined in \eqref{eq:defDeltah} (see also \eqref{eq:Aalphabeta}, \eqref{eq:defEh}, \eqref{eq:defPoissonAppWeak}).

Let   $w\in W^{1,2}_{0}(B_{r_{v}}(p))$ be a weak solution to the Poisson problem \eqref{eq:defPoissonAppSphere} on $B_{r_{v}}(p)\subset \mathbb{S}_{K}^{N}$. Then
\begin{itemize}
\item \emph{Pointwise comparison:}  $u^\star(x)\leq w(x)$,   for every $x\in B_{r_{v}}(p)$.
\item  \emph{Gradient comparison:}  For any $1\leq q \leq 2$, the following $L^{q}$-gradient estimate holds: 
	\begin{equation*}
	\int_\Omega |\nabla u|^q \dmeas_{g}	\leq \int_{B_{r_{v}}(p)} |\nabla w|^{q} \, \dmKN.
	\end{equation*}
\item \emph{Rigidity:} if  $u^\star(x)= w(x)$,   for at least a point  $x\in B_{r_{v}}(p)$, then  $(M,g)$ is isometric to  $\mathbb{S}_{K}^{N}$.
\item \emph{Stability:} see \cref{thm:stabilityTalenti}.
\item \emph{Improved Sobolev embeddings:} Assume that $u\in W^{1,2}_{0}(\Omega)$ satisfies $\Delta_{h} u\in L^{p}(\Omega)$;
\begin{itemize}
\item   If $\frac{N}{2}\leq p \leq \infty$, then $u\in  L^{\infty}(\Omega)$ with
		\begin{equation*}
		\norm{u}_{L^{\infty}(\Omega)}\leq C(K,N,v,p,\alpha) \norm{\Delta_{h} u}_{L^p(\Omega)}.
		\end{equation*}
	\item If $2\leq p \leq \frac{N}{2}$ and $1\leq q < \pths*{\frac{1}{p}-\frac{2}{N}}^{-1}$, then $u\in L^{q}(\Omega)$ with
		\begin{equation*}
		\norm{u}_{L^{q}(\Omega)}\leq C(K,N,v,p,q,\alpha) \norm{\Delta_{h} u}_{L^p(\Omega)}.
		\end{equation*}	

\end{itemize}
\end{itemize}
\end{theorem}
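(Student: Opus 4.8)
The plan is to derive \cref{theorem:talentiAppendix} as a specialization of the $\RCD(K,N)$ results proved in the body of the paper, the only genuinely new ingredient being the upgrade, in the rigidity statement, from ``spherical suspension'' to ``round sphere''. First I would record the standard facts that place the present setting inside the $\RCD$ framework: a complete $N$-dimensional Riemannian manifold $(M,g)$ with ${\rm Ric}_g\geq K g$, endowed with the normalized volume $\meas_g$, is an $\RCD(K,N)$ \mms{} (Lott--Sturm--Villani theory together with infinitesimal Hilbertianity of smooth manifolds), it is compact by Bonnet--Myers, and on it the minimal weak upper gradient of a Sobolev function coincides $\meas_g$-a.e. with $|\nabla u|_g$, so that $W^{1,2}(M,\dist_g,\meas_g)$ and $W^{1,2}_0(\Omega)$ are the classical Sobolev spaces. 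Next I would check that $\Eform_h$ of \eqref{eq:defEh} satisfies \cref{assumption:eform}: strong locality follows from the pointwise differential form of $h(\nabla u,\nabla v)$ and the fact that $\nabla$ kills (a.e.) the set where a Sobolev function is locally constant; $\alpha$-uniform ellipticity is precisely the lower bound $h\geq\alpha g$ in \eqref{eq:Aalphabeta}, and the order-one bound is the upper bound $h\leq\beta g$. With this, the weak-solution notion of \eqref{eq:defPoissonAppWeak} is literally that of \cref{def:domain_of_LE} for $\Eform=\Eform_h$. Finally, for $N\geq 2$ an integer the model space $(\JKN,\deu,\mKN)$ is, by \cref{rem:Sphere}, the radial profile of $\mathbb{S}_K^N$: the map $x\mapsto\dist_{\mathbb{S}_K^N}(p,x)$ pushes the normalized volume forward to $\mKN$, radial functions on $B_{r_v}(p)$ correspond to functions on $[0,r_v]$, the Laplace--Beltrami operator restricted to radial functions is the weighted Laplacian $\lapKN$ of \cref{definition:weight_lapl}, and the appendix's Schwarz symmetrization coincides with the $(K,N)$-Schwarz symmetrization; hence the symmetrized problem \eqref{eq:defPoissonAppSphere}, read in the radial variable, is exactly the model Dirichlet problem of \cref{def:poiss_model} with datum $f^\star$ and ellipticity constant $\alpha$, solved explicitly by \cref{prop:sol_mod_space}.

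Granting these identifications, the pointwise comparison $u^\star\leq w$ and the $L^q$-gradient comparison are direct restatements of \cref{theorem:talenti}, and the improved Sobolev embeddings are direct restatements of \cref{Cor:SobolevIneq}/\cref{thm:sob_emb} (the space $D_{\Omega,p}(\Eop)$ for $\Eop=\Delta_h$ being exactly $\brcs{u\in W^{1,2}_0(\Omega):\Delta_h u\in L^p(\Omega)}$, with constants depending only on $K,N,v,p,q,\alpha$); the stability statement is the verbatim specialization of \cref{thm:stabilityTalenti} to $(M,\dist_g,\meas_g)$ in the case $h=g$. So only the rigidity claim requires an extra step.

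For rigidity, assume $f\not\equiv 0$ (otherwise $u\equiv w\equiv 0$ and the assertion is vacuous) and $u^\star(x_0)=w(x_0)$ for some $x_0\in B_{r_v}(p)$; setting $\rho_0\doteq\dist_{\mathbb{S}_K^N}(p,x_0)\in[0,r_v)$, this says that the $(K,N)$-Schwarz symmetrization of $u$ and the radial profile of $w$ agree at $\rho_0$, which is the hypothesis of \cref{thm:talenti_rigidity}. Using the scaling property \eqref{eq:Scaling} (see \cref{rem:scaling}) I would first replace $g$ by $\tilde g\doteq\frac{K}{N-1}g$, so that ${\rm Ric}_{\tilde g}\geq(N-1)\tilde g$ and \cref{thm:talenti_rigidity} applies, giving that $(M,\tilde g)$ with its normalized volume is isomorphic, as a \mms{}, to a spherical suspension $[0,\pi]\times^{N-1}_{\sin} Y$ over some $\RCD(N-2,N-1)$ space $Y$. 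Now the two tips of this suspension lie at distance exactly $\pi$, while the generalized Bonnet--Myers theorem for $\RCD(N-1,N)$ spaces forces $\mathrm{diam}(M,\tilde g)\leq\pi$; hence $\mathrm{diam}(M,\tilde g)=\pi$. Since moreover ${\rm Ric}_{\tilde g}\geq(N-1)\tilde g$ and $M$ is a smooth compact manifold, Cheng's maximal diameter rigidity theorem yields that $(M,\tilde g)$ is isometric to the unit round sphere $\mathbb{S}^N$; undoing the rescaling, $(M,g)$ is isometric to $\mathbb{S}_K^N$. (Alternatively one could bypass Cheng, either by noting that a spherical suspension with smooth total space must have a round $\mathbb{S}^{N-1}$ as its link --rescaling small geodesic spheres about a tip-- or by invoking the rigidity of the smooth Lévy--Gromov inequality applied to the super-level sets $\brcs{|u|>t}$, which attain equality in Lévy--Gromov by the proof of \cref{thm:talenti_rigidity}.)

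I expect the only real obstacle to be this last point --converting the metric-measure ``spherical suspension'' conclusion of the $\RCD$ machinery into a Riemannian isometry with $\mathbb{S}_K^N$. Every other item reduces, after the translation dictionary set up in the first paragraph, to a statement already established in the paper, so the write-up should be short.
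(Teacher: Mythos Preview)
Your proposal is correct and follows essentially the same approach as the paper: after the translation dictionary, all items except rigidity are direct specializations of the corresponding $\RCD$ results, and for rigidity the paper likewise invokes \cref{thm:talenti_rigidity} to obtain a spherical suspension (hence maximal diameter) and then applies Cheng's maximal diameter theorem to conclude isometry with $\mathbb{S}_K^N$. Your write-up is in fact more detailed than the paper's, which only comments on the rigidity step.
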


We only comment the rigidity statement: from \cref{thm:talenti_rigidity} we know that $(M,g)$ is a $(K,N)$-spherical suspension, in particular it has diameter $\pi \sqrt{\frac{N-1}{K}}$. By the Cheng's maximal diameter theorem, it follows that $(M,g)$ is isometric to the round sphere $\mathbb{S}_{K}^{N}$.


\subsection{A probabilistic interpretation in the smooth setting:  the exit time of Brownian motion}
In this section we consider the smooth setting of a compact $2\leq N$-dimensional Riemannian manifold $(M,g)$ without boundary, with Ricci curvature tensor satisfying ${\rm Ric}_{g}\geq K \,g$ for some constant $K>0$. 
Let $\Omega\subset M$ be an open subset, fix $x\in \Omega$ and let $(X_{t})_{t\geq 0}$ be the Brownian motion starting from $x$ (a good reference for the Brownian motion on Riemannian manifolds is the monograph by Hsu \cite{Hsu}).
The \emph{exit time from $\Omega$} is the Random variable $\tau_{\Omega}$ defined on the Brownian probability space as:
\begin{equation}\label{eq:defET}
\tau_{\Omega}(X):=\inf\{t>0\, :\,  X_{t}\notin \Omega \}.
\end{equation}
The connection between exit time and Poisson equations in the Euclidean setting is classical (it goes back at least to Kakutani \cite{Kakutani1944}), in the next proposition we give the natural generalization to the Riemannian setting.

\begin{proposition}\label{prop:eET}
Let $(M,g)$ be a compact Riemannian manifold without boundary and let $\Omega\subset M$ be an open subset with smooth boundary. Let $f\in C^{\infty}(\Omega)$ be a bounded smooth function.
Suppose $u\in C^{\infty}(\bar{\Omega})$ solves the Poisson problem
\begin{equation}\label{eq:PoissET}
	\left\lbrace
	\begin{aligned}
	 -  \frac{1}{2}\Delta u & = f \quad \text{in $\Omega$}\\
	 u&=0   \quad \text{on $\partial \Omega$}
	\end{aligned}
	\right. 
	\end{equation}
	Then $u$ can be written as
	\begin{equation}\label{eq:uET}
	u(x)={\mathbb E}_{x} \left(  \int_{0} ^{\tau_{\Omega}(X)} f(X_{t}) \, \de t\right),
	\end{equation}
	where $ {\mathbb E}_{x} (Y)$ denotes the mean of the random variable $Y$ on the Brownian probability space, when the Brownian motion starts at $x$.  
\end{proposition}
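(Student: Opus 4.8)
The plan is to use the standard martingale / It\^o-calculus approach adapted to the Riemannian setting. First I would recall that on a compact Riemannian manifold the Brownian motion $(X_t)_{t\ge 0}$ is the diffusion generated by $\tfrac12\Delta$, so that for any $\phi\in C^\infty(M)$ the process
\begin{equation*}
M_t^{\phi}\doteq \phi(X_t)-\phi(X_0)-\int_0^t \tfrac12\Delta\phi(X_s)\,\de s
\end{equation*}
is a martingale with respect to the Brownian filtration (see Hsu \cite{Hsu}); this is just It\^o's formula along the diffusion, and it extends by localization to functions that are only $C^\infty$ on the open set $\Omega$, provided one stops the process before it exits. Second, since $\partial\Omega$ is smooth and $M$ is compact, the exit time $\tau_\Omega$ is almost surely finite with $\mathbb E_x[\tau_\Omega]<\infty$; this is the one genuinely analytic input, and it follows from the fact that $\overline\Omega$ is compact and $\partial\Omega$ is regular (every boundary point is regular for the Dirichlet problem), or alternatively from comparing $\tau_\Omega$ with the exit time from a small ball and iterating.

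The core of the argument is then as follows. Fix $x\in\Omega$ and let $u\in C^\infty(\bar\Omega)$ solve \eqref{eq:PoissET}. Apply the martingale identity above to $u$ along the stopped process $X_{t\wedge\tau_\Omega}$: for every $t\ge 0$,
\begin{equation*}
\mathbb E_x\bigl[u(X_{t\wedge\tau_\Omega})\bigr]
= u(x)+\mathbb E_x\!\left[\int_0^{t\wedge\tau_\Omega}\tfrac12\Delta u(X_s)\,\de s\right]
= u(x)-\mathbb E_x\!\left[\int_0^{t\wedge\tau_\Omega} f(X_s)\,\de s\right],
\end{equation*}
where in the last step I used $-\tfrac12\Delta u=f$ on $\Omega$. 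Now let $t\to\infty$. On the left, $X_{t\wedge\tau_\Omega}\to X_{\tau_\Omega}\in\partial\Omega$ almost surely (because $\tau_\Omega<\infty$ a.s.) and $u$ is bounded on the compact set $\bar\Omega$, so dominated convergence gives $\mathbb E_x[u(X_{t\wedge\tau_\Omega})]\to \mathbb E_x[u(X_{\tau_\Omega})]=0$ since $u\equiv 0$ on $\partial\Omega$. On the right, $f$ is bounded and $\mathbb E_x[\tau_\Omega]<\infty$, so $\bigl|\int_0^{t\wedge\tau_\Omega}f(X_s)\,\de s\bigr|\le \|f\|_\infty\,\tau_\Omega\in L^1$, and dominated convergence yields $\mathbb E_x\bigl[\int_0^{t\wedge\tau_\Omega}f(X_s)\,\de s\bigr]\to \mathbb E_x\bigl[\int_0^{\tau_\Omega}f(X_s)\,\de s\bigr]$. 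Passing to the limit in the displayed identity gives exactly \eqref{eq:uET}.

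The main obstacle — the only point requiring care beyond routine stochastic calculus — is justifying $\mathbb E_x[\tau_\Omega]<\infty$ and the attendant uniform integrability, i.e.\ ensuring the stopped-martingale identity survives the passage $t\to\infty$; smoothness of $\partial\Omega$ together with compactness of $M$ is precisely what makes this work. A clean way to package it is to note that, taking $f\equiv 1$, the solution $v\in C^\infty(\bar\Omega)$ of $-\tfrac12\Delta v=1$, $v|_{\partial\Omega}=0$, is bounded, and the same stopped-martingale computation gives $\mathbb E_x[t\wedge\tau_\Omega]=v(x)-\mathbb E_x[v(X_{t\wedge\tau_\Omega})]\le 2\|v\|_{\infty}$ for all $t$, whence $\mathbb E_x[\tau_\Omega]\le 2\|v\|_\infty<\infty$ by monotone convergence. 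With this in hand the rest is the dominated-convergence bookkeeping above.
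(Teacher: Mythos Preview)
Your proof is correct and follows essentially the same route as the paper: apply It\^o's formula to $u$ along the Brownian motion, use $-\tfrac12\Delta u=f$ to rewrite the drift term, and pass to the limit $t\to\tau_\Omega$ using the boundary condition $u|_{\partial\Omega}=0$. Your version is in fact more careful than the paper's sketch, which defers the integrability and limit-passage details to \cite{Durrett}, whereas you supply a self-contained argument for $\mathbb E_x[\tau_\Omega]<\infty$ and the dominated-convergence bookkeeping.
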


\begin{proof}
The proof in the Riemannian setting goes along the same lines of the classical Euclidean proof, thus we only sketch the main steps.
\\By It\^o's formula (see for instance \cite[Proposition 3.2.1]{Hsu}), the quantity 
$$
u(X_{t})-u(X_{0})-\frac{1}{2}\int_{0}^{t} \Delta u(X_{s}) \, \de s, \quad \forall t\in [0, \tau_{\Omega}(X)),
$$
defines a local Martingale. Since $u\in C^{\infty}(\bar{\Omega})$ solves \eqref{eq:PoissET}, it follows that also
$$
M_{t}\doteq u(X_{t}) + \int_{0}^{t}  f(X_{s})\, \de s
$$
is a local Martingale on $ [0, \tau_{\Omega}(X))$. Moreover,  by the very definition  \eqref{eq:defET} of exit time we have that 
$\lim_{t\uparrow \tau_{\Omega}(X)} u(X_{t})=0$. Letting $t\uparrow \tau_{\Omega}(X)$ and applying ${\mathbb E}_{x}$  gives  \eqref{eq:uET} (for more details see  for instance \cite[pp.251-252]{Durrett}).
\end{proof}

Specializing \cref{prop:eET} to $f\equiv 1$, the solution $u$ to \eqref{eq:PoissET} is then the stress function of $\Omega$ and \cref{eq:uET} gives that
$$
u(x)= {\mathbb E}_{x}(\tau_{\Omega}(X)), \quad \forall x\in \Omega.
$$
In other terms, the stress function $u$ evaluated at $x$ corresponds to the expected exit time of the Brownian motion starting at $x$. In the same spirit, 
$$
{\mathcal T}(\Omega)\doteq \frac{1}{{\rm vol}_{g}(\Omega)} \int_{\Omega} u \, \de {\rm vol}_{g}=  \frac{1}{{\rm vol}_{g}(\Omega)} \int_{\Omega} {\mathbb E}_{x}(\tau_{\Omega}(X)) \, \de {\rm vol}_{g},
$$
can be interpreted as the average exit time of a Brownian motion starting at a random point $x\in \Omega$ (with respect to the uniform probability ${\rm vol}_{g}(\Omega)^{-1} \, {\rm vol}_{g}$).
Notice that  
$${\mathcal T}(\Omega)= \frac{1}{{\rm vol}_{g}(\Omega)} T(\Omega),$$
where  $T(\Omega)$ is the torsional rigidity of $\Omega$ defined in \cref{Sec:Torsion}.
\\Hence, specializing \cref{theorem:talentiAppendix} (see also \cref{thm:StVenPol}) to $f\equiv 2$ and $h=g$ gives the following:

\begin{corollary}[Exit-time comparison]\label{cor:ET}
Let $(M,g)$ be an $N$-dimensional compact Riemannian manifold without boundary with ${\rm Ric}_{g}\geq K \,g$ for some constant $K>0$, $N\geq 2$.
Let $\Omega\subset M$ be an open subset with $\meas_{g}(\Omega)=v\in (0,1)$ and let $B_{r_{v}}\subset {\mathbb S}^{N}_{K}$  be a metric ball with $\meas_{K,N}(B_{r_{v}})=v$. Then
\begin{itemize}
\item \emph{Expected exit time comparison}:  $ \left({\mathbb E}_{(\cdot)}(\tau_{\Omega}) \right)^{\star} (x) \leq {\mathbb E}_{x}(\tau_{B_{r_{v}}})$, for all $x\in B_{r_{v}}$.
\item \emph{Average exit time comparison with rigidity}:  ${\mathcal T}(\Omega)\leq {\mathcal T}(B_{r_{v}})$. Equality holds if and only if $(M,g)$ is isometric to ${\mathbb S}^{N}_{K}$.
\item \emph{Stability}: Given $N\geq 2, v\in (0,1), \varepsilon>0$ there exists $\delta=\delta(N,v,\varepsilon)>0$ such that if $K=N-1$ and $\Omega=B_{R}(x)$ is an open metric ball with $\meas_{g}(B_{R}(x))=v\in (0,1)$ satisfying ${\mathcal T}(\Omega)\geq {\mathcal T}(B_{r_{v}})-\delta$ then $(M,g)$ is $\varepsilon$-mGH close to a spherical suspension, i.e. there exists a spherical suspension $\Zdm$ such that 
	$
	\dmGH((M,g),\Zdm)<\epsilon.
	$
\end{itemize}
\end{corollary}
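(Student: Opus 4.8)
The plan is to obtain \cref{cor:ET} as a direct translation of \cref{theorem:talentiAppendix} and \cref{thm:StVenPol} (specialised to $h=g$, hence $\alpha=1$ and $\Delta_h=\Delta_g$, and to $f\equiv 2$, equivalently $f\equiv 1$ for the generator $\tfrac12\Delta$ of Brownian motion) through the probabilistic representation of \cref{prop:eET}. First I would record the two relevant instances of \cref{prop:eET}. Applying it on $\Omega$ with $f\equiv 1$ --- i.e.\ solving $-\Delta u=2$ in $\Omega$ with $u=0$ on $\partial\Omega$, which is exactly the torsion problem \eqref{eq:PoisTR} --- identifies the stress function as $u(x)={\mathbb E}_x(\tau_\Omega)$. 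Applying \cref{prop:eET} on the model sphere $\mathbb{S}_{K}^{N}$, on the geodesic ball $B_{r_v}(p)$ (whose boundary is smooth, since $v<1$ forces $r_v<\pi\sqrt{(N-1)/K}$, below the injectivity radius of $\mathbb{S}_{K}^{N}$), identifies the solution $w$ of $-\Delta_{\mathbb{S}_{K}^{N}}w=2$, $w|_{\partial B_{r_v}(p)}=0$, as $w(x)={\mathbb E}_x(\tau_{B_{r_v}})$. By rotational symmetry $w$ is radial and coincides with the solution $u_{K,N,v}$ of the one-dimensional model problem \eqref{eq:PoisTRModKNv} of \cref{subsec:equation_on_model_space} (here $f^\star\equiv 2$, as $f$ is constant).

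The first bullet is then immediate from the pointwise part of \cref{theorem:talentiAppendix}: since $u^\star=\bigl({\mathbb E}_{(\cdot)}(\tau_\Omega)\bigr)^\star$ and the comparison function is precisely $w(x)={\mathbb E}_x(\tau_{B_{r_v}})$, the Talenti inequality $u^\star\le w$ on $B_{r_v}(p)$ is the asserted expected--exit--time comparison. For the second bullet I would integrate this inequality: using equimeasurability (\cref{prop:equimeas}) together with ${\rm vol}_g(\Omega)=v\,{\rm vol}_g(M)$ and the analogous identity ${\rm vol}_{\mathbb{S}_{K}^{N}}(B_{r_v})=v\,{\rm vol}_{\mathbb{S}_{K}^{N}}(\mathbb{S}_{K}^{N})$, one gets ${\mathcal T}(\Omega)=\tfrac1v\int_{[0,r_v]}u^\star\dmKN\le\tfrac1v\int_{[0,r_v]}w\dmKN={\mathcal T}(B_{r_v})$, the common factor $\tfrac1v$ cancelling; this is precisely the inequality $T(\Omega)\le T_{N-1,N,v}$ of \cref{thm:StVenPol}(1). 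The rigidity statement is then read off from the rigidity in \cref{thm:StVenPol}(2): equality forces $u^\star=w$ $\meas_{K,N}$-a.e., hence $\Xdm$ is a spherical suspension by \cref{thm:talenti_rigidity}, and since such a space has diameter $\pi\sqrt{(N-1)/K}$, Cheng's maximal diameter theorem upgrades this to an isometry with $\mathbb{S}_{K}^{N}$; the converse is realised, as on the model, by a geodesic ball on $\mathbb{S}_{K}^{N}$.

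Finally, the stability bullet is a transcription of \cref{thm:StVenPol}(3) via the same normalisation ${\mathcal T}(\Omega)=\tfrac1v T(\Omega)$, ${\mathcal T}(B_{r_v})=\tfrac1v T_{N-1,N,v}$ (with $T$ computed against the normalised measure $\meas_g$): the hypothesis ${\mathcal T}(\Omega)\ge {\mathcal T}(B_{r_v})-\delta$ becomes $T(\Omega)\ge T_{N-1,N,v}-v\delta$, so one invokes \cref{thm:StVenPol}(3) with $v\delta$ in place of its $\delta$. I do not anticipate a genuine obstacle: all the weight of the argument rests on the already-proved \cref{theorem:talenti,thm:talenti_rigidity,thm:stabilityTalenti,thm:StVenPol}, and the only points requiring attention are bookkeeping ones --- the factor $\tfrac12$ in the Brownian generator versus the normalisation $-\Delta u=2$ of the torsion problem, the volume constants relating ${\mathcal T}$ to $T$, and the regularity of $\partial\Omega$ (and $\partial B_{r_v}$) needed to apply \cref{prop:eET}; for an open set $\Omega$ with non-smooth boundary one reads the exit-time identity as the statement obtained by smooth approximation, or, as in \cref{prop:eET} itself, restricts to $\Omega$ with smooth boundary.
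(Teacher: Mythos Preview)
Your proposal is correct and follows essentially the same approach as the paper: the corollary is derived by specializing \cref{theorem:talentiAppendix} and \cref{thm:StVenPol} to $h=g$ and $f\equiv 2$, using \cref{prop:eET} (with $f\equiv 1$) to identify the stress function with the expected exit time both on $\Omega$ and on the model ball $B_{r_v}\subset\mathbb{S}_K^N$. Your explicit mention of Cheng's maximal diameter theorem to upgrade the spherical-suspension conclusion to an isometry with $\mathbb{S}_K^N$, and your bookkeeping remarks on the $\tfrac12$ in the Brownian generator, the $\tfrac1v$ relating $\mathcal{T}$ to $T$, and the smoothness hypothesis on $\partial\Omega$ needed for \cref{prop:eET}, are all in line with (and slightly more detailed than) the paper's own treatment.
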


Corollary \ref{cor:ET} should be compared with \cite{CLM} where also higher order average exit times are estimated in a smooth Riemannian manifold with strictly positive Ricci curvature. The novelty of Corollary \ref{cor:ET} lies in particular in the stability statement.


\nocite{Ambrosio,
		Cavalletti2017a,
		Kesavan2006,
		Mondino2019,
		Sturm2006a%
		}
		
\emergencystretch=1em

	{\hbadness=2000		
	{\small {\footnotesize
\printbibliography}}
	}					
\end{document}